\newcommand{\bi}{\begin{itemize}}
\newcommand{\ei}{\end{itemize}}
\newcounter{mysection}[section]
\newcounter{mysubsection}[mysection]
\newcounter{mysubsubsection}[mysubsection]
\def\N{\mathbb{N}}
\def\C{\mathbb{C}}
\def\Z{\mathbb{Z}}
\def\SSS{\mathbb{S}}
\def\UU{\mathcal{U}}
\def\RR{\mathcal{R}}
\def\TT{\mathcal{T}}
\def\SS{\mathcal{S}}
\def\KK{\mathcal{K}}
\def\JJ{\mathcal{J}}
\def\g{\mathfrak{g}}
\def\h{\mathfrak{h}}
\def\n{\mathfrak{n}}
\def\b{\mathfrak{b}}
\def\quotient#1#2{%
    \raise1ex\hbox{$#1$}\Big/\lower1ex\hbox{$#2$}%
}
\begin{document}
\theoremstyle{definition}
\newtheorem{mydef}{Definition}[subsection]
\theoremstyle{remark}
\newtheorem{myrmk}[mydef]{\textbf{Remark}}
\theoremstyle{definition}
\newtheorem{mylma}[mydef]{Lemma}
\theoremstyle{definition}
\newtheorem{myex}[mydef]{Example}
\theoremstyle{plain}
\newtheorem{mycor}[mydef]{Corollary}
\theoremstyle{plain}
\newtheorem{mythm}[mydef]{Theorem}
\theoremstyle{definition}

\newtheorem{myprop}[mydef]{Proposition}
\newtheorem*{myproof}{Proof}
\title{Annihilators of simple tensor modules}
\author{Alexandru-Gabriel Sava. Jacobs University Bremen}
\date{January the $1^{st}$, 2012}
\maketitle

\begin{abstract}
We study primitive ideals in the enveloping algebra of finitary locally finite infinite-dimensional complex Lie algebras. In particular we investigate the annihilators of the simple objects in the category of tensor modules. This category has been studied in \cite{PStyr} and \cite{PS}. Moreover we prove that two simple tensor modules are isomorphic if and only if they have the same annihilator. 
\end{abstract}
\newpage

\tableofcontents
\newpage

\section{Introduction}\vspace{0.2cm}

\hspace{0.5cm}Given a Lie algebra $\g$, an important problem in representation theory is to classify all irreducible representations of $\g$ or equivalently, all simple $\g$-modules. This problem proves to be extremely hard even when $\g$ is simple finite-dimensional. This is due to the fact that the class of such irreducible representations is "very large" in general. In fact, $sl(2, \C)$ is the only simple finite-dimensional Lie algebra for which all irreducible representations are classified \cite{Block}. 

The problem of classifying simple $\g$-modules can be turned into an associative algebra problem by passage to the enveloping algebra $\UU(\g)$ : $\g$ is a subspace of $\UU(\g)$ (by the Poincare-Birkhoff-Witt theorem) and every $\g$-module has an unique compatible $\UU(\g)$-module structure. It is also a standard result that a $\g$-module is simple if and only if it is simple as an $\UU(\g)$-module. This approach seems very appealing even if it has the disadvantage of passing to an infinite-dimensional algebra. This is because there is a lot more freedom in doing computations in $\UU(\g)$, and standard associative algebra methods can be applied. 

Even so, this approach is hardly sufficient to solve the problem. Still, one can find "good enough" invariants which reduce our problem substantially. Let $M$ be a $\g$-module. Define $Ann_{\UU(\g)}M=\{x\in\UU(\g) : xm=0\mbox{ for all }m\in M\}$, the annihilator of $M$. The annihilators of simple $\g$-modules are called primitive ideals of $\UU(\g)$. The set of primitive ideals in $\UU(\g)$ is denoted by $Prim(\UU(\g))$. A simple but very useful observation is that two simple modules cannot be isomorphic if they have different annihilators. So rather than looking at the class of simple $\g$-modules (denoted by $\g^{\wedge}$), it is natural to try to understand the structure of $Prim(\UU(\g))$ first. Now our problem seems much more approachable since even if $\g^{\wedge}$ is "large", $Prim(\UU(\g))$ can still be of reasonable size and in some cases completely computable. In addition, as observed by N. Jacobson, one can equip $Prim(\UU(\g))$ with a topology, \cite{Jacobson}. Before going any further, we need to formalize the link between simple $\g$-modules and primitive ideals. 

There is an obvious map $\pi$ from $\g^{\wedge}$ to $Prim(\UU(\g))$ : $M\longmapsto Ann_{\UU(\g)}M$. The map $\pi$ is by definition a surjection but it is hardly an injection. Completing the link between primitive ideals and simple $\g$-modules, is the same as answering the question, what is the fiber of $\pi$ at each point. This question, which probably first appeared in the French school, was given a definitive answer in the case when $\g$ is solvable by J. Dixmier, who gave a complete description of $Prim(\UU(\g))$. In this case, Dixmier proved that primitive ideals are annihilators of a class of induced representations that correspond bijectively to orbits of the action on $\g^*$ of the adjoint group $G$ of $\g$. For the case $\g$ semisimple, which is more relevant for this thesis, M. Duflo proved that when restricted to the category of simple highest weight modules, $\pi$ remains surjective ; moreover Duflo proves that every primitive ideal contains a minimal primitive ideal which he explicitly computes using the work of B. Kostant and the theory of Verma modules. One of Duflo's main results is that such minimal primitive ideals are generated by their intersection with the center of $\UU(\g)$. Even so, the problem of describing the fibers of the restricted map is still very difficult and has been the subject of important research, see \cite{BJ}.   

Our goal in this thesis is to study primitive ideals in $\UU(\g)$ for a finitary locally finite infinite-dimensional Lie algebra $\g$, in particular we are interested in the cases when $\g\simeq sl(\infty, \C)$, $o(\infty, \C)$ and $sp(\infty, \C)$. These are infinite-dimensional Lie algebras (which we will consider over $\C$) exhausted by finite-dimensional simple Lie algebras, and satisfying the condition of finitarity, see \cite{Ba},\cite{BaStr}. The representation theory of these Lie algebras has recently begun to be understood through the work of I. Penkov, V. Serganova, I. Dimitrov, K. Styrkas and others. Interestingly enough, these algebras posses rather peculiar properties that their finite-dimensional siblings do not. For instance, they do not admit any non-trivial finite-dimensional representations. Also, for these Lie algebras, it turns out that the center of $\UU(\g)$ consists only of constants. This theory is of particular interest for us because it raises a first important question. If in the finite-dimensional case every primitive ideal contains a minimal primitive ideal which is centrally generated (and non-trivial !) then are there any non-trivial primitive ideals in $\UU(\g)$ when $\g$ is one of the Lie algebras mentioned above ? After all, the intersection with the center, in this case, cannot be anything but $0$. We will show that the answer to this question is affirmative.\footnote{Primitive ideals in $\UU(\g)$ have been considered in two preprints of the belorussian mathematician Alexei Zhilinskii, see \cite{Zhilinskii1}, \cite{Zhilinskii2}. In fact, more general results are proven in these preprints. Due to language difficulties (the papers are in Russian), we have been unable to understand Zhilinskii's work in full. However, we were able to determine that Zhilinskii does not consider the annihilators of simple tensor modules, while this is precisely what we do. } We will do so by investigating an interesting category of modules introduced in [PStyr], the tensor modules. The simple objects of this category are highest weight modules with respect to a certain choice of the Borel subalgebra and they are "the closest" analogues of finite-dimensional simple modules. It turns out that to distinguish between two simple tensor modules, it is enough to compare their annihilators. In fact even more is true : if a simple $\g$-module $M$ ($\g$ as above) has the same annihilator as a simple tensor module, then $M$ is isomorphic to that tensor module. 

The current thesis is structured in three parts. The first part is concerned with the basic theory of primitive ideals and presents classical results for the finite-dimensional case. The second part is concerned with infinite-dimensional Lie theory. The three Lie algebras : $sl(\infty, \C)$, $o(\infty, \C)$ and $sp(\infty, \C)$ are introduced and the tensor category is presented. Finally, part three is concerned with the results on annihilators of simple tensor modules. 

\section{Primitive ideals in $\UU(\g)$ for a finite-dimensional semisimple Lie algebra $\g$}\
\setcounter{mysection}{2}
We give a general description of primitive ideals in $\UU(\g)$ and present some results of M. Duflo, B. Kostant and J. Dixmier to motivate our study of annihilators of simple tensor modules. Relevant references are : \cite{Dixmier1}, \cite{Dixmier2}, \cite{Dixmier3}, \cite{Dixmier4}, \cite{Duflo1}, \cite{Duflo2}, \cite{Duflo3}, \cite{Humphreys}, \cite{Kostant}, \cite{KoRa}. We begin with basic definitions and some examples and work our way towards Duflo's theorem. Not all results presented will have a proof. However, some relevant statements (those concerning primitive ideals) do come with proof. We follow (as much as possible) the notations in \cite{Dixmier1}. The base field is $\C$ unless stated otherwise. 

\subsection{Ideals. Basic definitions and examples}
\vspace{0.2cm}
In this section $\RR$ denotes an associative ring with unity. Let $I\subset\RR$ be a proper two-sided ideal. \
 
\begin{mydef}\

$I$ is called : 
\begin{enumerate}
\item $\emph{maximal}$ if it is maximal in the set of two-sided ideals of $\RR$ distinct of $\RR$. 
\item $\emph{primitive}$ if it is the annihilator of a simple left $\RR$-module. 
\item $\emph{prime}$ if for any two elements $a,b\in\RR$, the inclusion $a\RR b\subset I$ implies $a\in I$ or $b\in I$. 
\end{enumerate}
\end{mydef}\

\begin{myprop}
The following holds for a two-sided ideal $I\subset \RR$:
$$\emph{maximal} \xrightarrow{(1)}\emph{primitive}\xrightarrow{(2)}\emph{prime}$$
\end{myprop}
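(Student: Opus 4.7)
The plan is to handle the two implications separately, since they rest on different ideas.

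For implication (1), the goal is to manufacture a simple left $\RR$-module whose annihilator is exactly $I$. Given $I$ maximal, I would pass to the quotient ring $\RR/I$, which is a simple ring (no proper nontrivial two-sided ideals). By Zorn's lemma applied in this unital ring, there exists a maximal left ideal $\mathfrak{m}\subset\RR/I$, so $M=(\RR/I)/\mathfrak{m}$ is a simple left $\RR/I$-module, and hence a simple left $\RR$-module via the quotient map $\RR\to\RR/I$. Its annihilator in $\RR$ obviously contains $I$. To show equality, note that the image of $\mathrm{Ann}_\RR M$ in $\RR/I$ is a two-sided ideal of $\RR/I$ that does not contain $1+I$ (since $M\neq 0$); by simplicity of $\RR/I$ this image is zero, hence $\mathrm{Ann}_\RR M = I$.

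For implication (2), suppose $I=\mathrm{Ann}_\RR M$ for a simple left module $M$, and take $a,b\in\RR$ with $a\RR b\subset I$ and $b\notin I$; the task is to conclude $a\in I$. Since $b$ does not annihilate $M$, there is some $m\in M$ with $bm\neq 0$. By simplicity, the cyclic submodule $\RR(bm)$ equals $M$, so every element of $M$ has the form $rbm$ for some $r\in\RR$. Then $a(rbm)=(arb)m=0$ because $arb\in a\RR b\subset I$. Hence $a$ annihilates $M$, i.e.\ $a\in I$, and $I$ is prime.

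The main conceptual subtlety lies in (1): one might be tempted to take $\RR/I$ itself as the desired simple module, but $\RR/I$ is only a simple \emph{ring}, not in general a simple left module over itself (think of a matrix algebra over a division ring). The Zorn's-lemma step producing a maximal left ideal $\mathfrak{m}\subset\RR/I$ is precisely the standard workaround. Step (2), by contrast, is essentially a one-line module-theoretic manipulation once ``primitive'' is unpacked as ``annihilator of a simple module''.
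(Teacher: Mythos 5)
Your proof is correct and follows essentially the same route as the paper: for (1), your maximal left ideal $\mathfrak m\subset\RR/I$ corresponds under the lattice isomorphism to a maximal left ideal $I_1\supseteq I$ in $\RR$, and $(\RR/I)/\mathfrak m\cong\RR/I_1$ is exactly the simple module the paper annihilates, with your appeal to simplicity of $\RR/I$ playing the same role as the paper's appeal to maximality of $I$. For (2), your direct element-level computation ($a(rbm)=(arb)m=0$, so $a\in\mathrm{Ann}_\RR M$) is the contrapositive repackaging of the paper's contradiction via the two-sided ideals $I_1=(a)$, $I_2=(b)$ and $I_1I_2M=0$; both hinge on $\RR bm=M$ for simple $M$.
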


\begin{proof} For implication $(1)$ we assume that $I$ is maximal (as a two-sided ideal) and we include $I$ into a maximal left ideal $I_1$ (this can always be done in an associative ring with unity \cite{Hungerford}). Then $\RR/I_1$ is a left $\RR$-module with respect to left multiplication. Moreover, it is clear that $R/I_1$ is simple. Indeed, the preimage of any proper $R$-submodule of $R/I_1$ would be a proper left ideal in $R$ containing $I_1$. 

Let $I'\subset\RR$ be the annihilator of $\RR/I_1$. Then $I'$ is a two-sided ideal in $\RR$ and it is primitive by the previous argument. The inclusion $I\subseteq I_1$ implies that $I$ annihilates $\RR/I_1$. Consequently $I\subseteq I'$, which by maximality of $I$, implies $I=I'$. This proves the first implication. 

For the second implication, let $I$ be the annihilator of a simple left $\RR$-module $M$. Assume for the sake of contradiction that there exist $a,b\in\RR$ such that $a\RR b\subset I$ and $a,b\notin I$. Let $I_1$ and $I_2$ be the two-sided ideals in $\RR$ generated by $a$ and $b$ respectively. Then $I_1I_2\subset I$. Moreover $a,b\notin I$ so $I_1M,I_2M\neq \{0\}$. Then $I_1M=M=I_2M$ since $M$ is simple. Hence $I_1(I_2M)=I_1M=M=(I_1I_2)M\subset IM=0$ false. Conclusion follows. 
\end{proof}
\vspace{0.2cm}

\begin{myrmk}
The converses of the implications in Proposition $2.1.2.$ are false in general as one can see from Example $2.1.6$. However, when $\RR$ is commutative, $(1)$ becomes an equivalence.   
\end{myrmk}
\vspace{0.2cm}

\begin{myprop}
If $\RR$ is commutative, then any primitive ideal is maximal. 
\end{myprop}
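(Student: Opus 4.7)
The plan is to exhibit any primitive ideal $I \subset \RR$ as the annihilator of a single cyclic generator of a simple module, and then use commutativity to upgrade that annihilator to a maximal two-sided ideal.

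First I would unpack the definition: by primitivity, there is a simple left $\RR$-module $M$ with $I = \mathrm{Ann}_\RR(M)$. Picking any nonzero $m \in M$, simplicity of $M$ forces $\RR m = M$, so the map $\RR \to M$, $r \mapsto rm$, is a surjection of $\RR$-modules; its kernel is the left ideal $J_m := \{r \in \RR : rm = 0\}$, giving $M \cong \RR / J_m$. Since $M$ is simple, $J_m$ is a maximal left ideal.

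Next I would exploit commutativity in two ways. On the one hand, every left ideal of $\RR$ is automatically two-sided, so $J_m$ is a maximal (two-sided) ideal of $\RR$. On the other hand, I claim $I = J_m$. The inclusion $I \subseteq J_m$ is immediate. Conversely, for $x \in J_m$ and any $n \in M$ written as $n = rm$, commutativity gives
\[
 x\cdot n = x(rm) = (xr)m = (rx)m = r(xm) = 0,
\]
so $x$ annihilates all of $M$, i.e.\ $x \in I$. Hence $I = J_m$, which is maximal.

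There is no real obstacle here; the only subtlety is recognizing that in the noncommutative setting $J_m$ need only be a left ideal and need not coincide with $\mathrm{Ann}_\RR(M)$ (which is why Remark 2.1.3 points out the converse fails in general). Commutativity removes precisely both defects in one stroke, so the argument is essentially the two displayed observations above.
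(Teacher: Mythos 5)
Your proof is correct, and it diverges from the paper's at the final step in a way worth noting. Both arguments hinge on the same key observation, made possible by commutativity: $I = \mathrm{Ann}_\RR(M)$ coincides with $\mathrm{Ann}_\RR(m) = J_m$ for any fixed nonzero $m$, because an element annihilating $m$ annihilates every $rm \in \RR m = M$. From there the paper proceeds indirectly: it invokes Zorn's lemma to embed $I$ in a maximal ideal $J$, and then rules out $I \subsetneq J$ by noting that $Jm$ would then be a nonzero proper submodule of the simple $M$ unless $Jm = M$, forcing some $j \in J$ with $jm = m$, hence $j - 1 \in I \subseteq J$, hence $1 \in J$, a contradiction. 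You instead observe directly that the surjection $r \mapsto rm$ gives $M \cong \RR/J_m$, so simplicity of $M$ is \emph{equivalent} to maximality of $J_m$ as a left ideal, which in the commutative setting is maximality outright. Your route is the more economical of the two — it bypasses Zorn entirely (maximality comes for free from the correspondence theorem rather than being constructed and then matched to $I$) and makes plain exactly where commutativity enters: once in identifying $I$ with $J_m$, and once in promoting a maximal left ideal to a maximal two-sided ideal.
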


\begin{proof} Let $I$ be the annihilator of a simple $\RR$-module $M$. Pick $m\in M,m\neq 0$. Then $\RR m=M$. Define $Ann_{\RR}(m)=\{r\in\RR : rm=0\}$. It is easy to see that $I=Ann_{\RR}(m)$ since $\RR$ is commutative. Now by Zorn's lemma, $I$ lies in some maximal ideal $J\neq\RR$. If $I\neq J$, then $Jm$ is a nontrivial submodule of $M$ so $Jm=M$. Therefore  there exists $j\in J$ such that $jm=m$ and hence $(j-1)m=0$. This implies $j-1\in J$, and consequently $1\in J$, which is false. Thus $I=J$ and $I$ is maximal. 
\end{proof}
\begin{myex}
Let $\RR=\C[X]$. Then by the structure theorem for modules over principal ideal domains, we know that a simple $\RR$-module is isomorphic to $\RR/(\RR p)$ for some irreducible $p\in\RR$, \cite{Hungerford}. In this case, an irreducible element of $\RR$ is a monic degree $1$ polynomial, $p=X-a$ for some $a\in\C$. It's easy to see that $Ann_{\C[X]}\left(\C[X]/(\C[X](X-a))\right)=\C[X](X-a)$ which is clearly a maximal ideal in $\C[X]$. 
\end{myex}
\vspace{0.1cm}

\begin{myex}
Let $\RR=\C[X,Y]$. It is an easy exercise to see that $(y-x-1)$ is a prime ideal. However, this ideal is not maximal (and hence not primitive) since we have the inclusion $(y-x-1)\subset (x-2, y-3)$. 
\end{myex}\vspace{0.2cm}


\subsection{Semisimple Lie algebras}\
\setcounter{mysection}{2}
Let $\g$ be a finite-dimensional Lie algebra with bracket $[\cdot,\cdot]$. For $x\in\g$ define the $\g$-endomorphism $ad_x$ by $ad_x(y)=[x,y]$. Define the following symmetric $\g$-invariant bilinear form on $\g$ : $\KK(x,y)=tr(ad_xad_y)$. $\KK$ is called the Killing form of $\g$ and it plays an important role in the theory of semisimple Lie algebras. 
\vspace{0.2cm}

\begin{mydef}
A Lie algebra $\g$ is said to be semisimple if its Killing form is non-degenerate. 
\end{mydef}
\vspace{0.1cm}
\begin{mydef}
A Lie algebra is said to be simple if $\dim \g>1$ and it has no proper ideals. 
\end{mydef}

\begin{myex}
Let $\g=sl(2, \C)=\{\left(\begin{array}{cc}a&b\\c&d\end{array}\right):a,b,c,d\in\C,a+d=0\}$. Then $\g$ is a simple $3$-dimensional Lie algebra. 
\end{myex}
\vspace{0.2cm}
The following result makes the connection between the definitions above.   

\begin{mythm}
A finite-dimensional Lie algebra is semisimple if and only if it is a direct sum of finite-dimensional simple Lie algebras. 
\end{mythm}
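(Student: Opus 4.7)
The plan is to use the Killing form $\KK$ as an orthogonality tool on $\g$, exploiting its invariance $\KK([x,y],z)=\KK(x,[y,z])$, which implies that for every ideal $\mathfrak{a}\subset\g$ the $\KK$-orthogonal complement $\mathfrak{a}^{\perp}$ is again an ideal. The whole argument will then reduce to two linear-algebraic inputs plus one deep fact I would quote: Cartan's criterion, stating that a Lie algebra $\mathfrak{s}$ is solvable iff $\KK(x,y)=0$ for all $x\in\mathfrak{s}$, $y\in[\mathfrak{s},\mathfrak{s}]$.

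For the implication \emph{semisimple $\Rightarrow$ direct sum of simples}, I would first rule out nonzero abelian ideals: if $\mathfrak{a}$ is abelian and $x\in\mathfrak{a}$, then for every $y\in\g$ the composition $(ad_x\,ad_y)^2$ lands in $[\mathfrak{a},\mathfrak{a}]=0$, so $ad_x\,ad_y$ is nilpotent and $\KK(x,y)=0$; non-degeneracy of $\KK$ then forces $x=0$. Next I would take any nonzero ideal $\mathfrak{a}\subset\g$ and look at $\mathfrak{b}:=\mathfrak{a}\cap\mathfrak{a}^{\perp}$, which is an ideal on which $\KK$ vanishes identically; Cartan's criterion makes $\mathfrak{b}$ solvable, so the last nonzero term of its derived series is an abelian ideal of $\g$ (derived subalgebras are characteristic, hence ideals in any containing algebra), which by the previous step must vanish. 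Thus $\mathfrak{b}=0$ and $\g=\mathfrak{a}\oplus\mathfrak{a}^{\perp}$ as a direct sum of ideals. Inducting on $\dim\g$, I would keep splitting and arrive at a decomposition $\g=\g_1\oplus\cdots\oplus\g_k$ into minimal nonzero ideals. Each $\g_i$ is then simple: because the summands commute with each other, any Lie ideal of $\g_i$ is automatically an ideal of $\g$, so minimality of $\g_i$ together with $\dim\g_i>1$ (else $\g_i$ would be abelian, excluded above) yields simplicity.

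For the converse, given $\g=\g_1\oplus\cdots\oplus\g_k$ with the $\g_i$ simple, I would observe that $[\g_i,\g_j]=0$ for $i\neq j$, so $ad$-matrices are block diagonal and the Killing form of $\g$ restricts on each $\g_i$ to its intrinsic Killing form $\KK_i$, with the summands mutually $\KK$-orthogonal. It therefore suffices to show each simple $\g_i$ has non-degenerate $\KK_i$. The radical of $\KK_i$ is an ideal of $\g_i$; by simplicity it is either $0$ or $\g_i$, and the latter case is excluded because Cartan's criterion would then force $\g_i$ solvable, contradicting $[\g_i,\g_i]=\g_i$ (which holds since $[\g_i,\g_i]$ is a nonzero ideal of the simple algebra $\g_i$).

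The main obstacle is really the invocation of Cartan's criterion, which is the one nontrivial input; everything else is orthogonal-complement bookkeeping and an induction on dimension. A secondary subtlety worth flagging in the write-up is the passage from ``$\mathfrak{a}\cap\mathfrak{a}^{\perp}$ is solvable'' to the existence of an abelian \emph{ideal of $\g$} inside it, which is where one uses that derived subalgebras of an ideal are themselves ideals of the ambient algebra.
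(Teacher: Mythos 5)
The paper records this as Theorem 2.2.4 without proof, treating it as a classical structure result whose references for the section are \cite{Dixmier1} and \cite{Humphreys}; there is therefore no in-text argument to compare your proposal against. Your proof is the standard Killing-form argument and it is correct: the nilpotency computation $(ad_x\,ad_y)^2=0$ ruling out nonzero abelian ideals, the splitting $\g=\mathfrak{a}\oplus\mathfrak{a}^{\perp}$ obtained from $\mathfrak{a}\cap\mathfrak{a}^{\perp}=0$ together with the dimension count $\dim\mathfrak{a}+\dim\mathfrak{a}^{\perp}=\dim\g$, the induction down to minimal ideals (each automatically an ideal of $\g$ once the summands commute), and the converse via the block structure of $\mathrm{ad}$ and the radical of $\KK_i$ are all sound, with Cartan's solvability criterion the one genuinely deep input. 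The only point I would spell out more carefully in a full write-up is a lemma you appeal to tacitly: for an ideal $\mathfrak{a}\triangleleft\g$, the intrinsic Killing form of $\mathfrak{a}$ agrees with the restriction of $\KK$ to $\mathfrak{a}\times\mathfrak{a}$. You state its analogue for direct summands in the converse direction, but it is already needed in the forward direction when you pass from ``$\KK$ vanishes on $\mathfrak{a}\cap\mathfrak{a}^{\perp}$'' to ``Cartan's criterion applies to $\mathfrak{a}\cap\mathfrak{a}^{\perp}$,'' since the criterion is a statement about the Killing form of the subalgebra in its own right. (Alternatively, one can apply the $\mathfrak{gl}(V)$ form of Cartan's criterion to $ad_\g$ restricted to $\mathfrak{a}\cap\mathfrak{a}^{\perp}$ and note that the kernel of that restriction is central, hence abelian, which also does the job.)
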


\vspace{0.2cm}
We now present a structure result concerning semisimple Lie algebras. However we need several definitions first.\

\begin{mydef}
Given a semisimple Lie algebra $\g$, a toral subalgebra is defined as a subalgebra for which all elements are semisimple (i.e. $ad_x$ is semisimple). A maximal toral subalgebra of $\g$ is called a Cartan subalgebra. 
\end{mydef}

\begin{myrmk}
$\g$ can be given the structure of a $\g$-module via $x\cdot y=[x,y]$. This is called the adjoint module or the adjoint representation of $\g$.
\end{myrmk} 
\vspace{0.2cm}
From now on, $\g$ will denote a semisimple Lie algebra unless stated otherwise. 

\begin{mydef}\

\begin{enumerate} 
\item Given a $\g$-module $M$ and $\h$ a Cartan subalgebra of $\g$, for $\lambda\in\h^*$, let $M_\lambda=\{m\in M : hm=\lambda(h)m\mbox{ for all }h\in\h\}$. If  $M_\lambda\neq 0$ then $\lambda$ is called a weight of $M$ and $M_\lambda$ is the weight space corresponding to $\lambda$. The set $\{\lambda\in\h^* : M_\lambda\neq\{0\}\}$ is called the support of $M$ and it is denoted by $supp\mbox{ }M$. 
\item The non-zero weights of the adjoint module are called roots and the corresponding weight spaces (denoted by $\g^{\alpha}, \alpha\in\h^*$) are called root spaces. 
\item A module $M$ is called a weight module if $M$ is equal to the sum of its weight spaces.\footnote[1]{It is easy to show that this sum is direct. }
\end{enumerate}
\end{mydef}

Given a Cartan subalgebra $\h\subset\g$, the corresponding set of roots is denoted by $R(\g,\h)$ or simply $R$ if there is no confusion. $R(\g, \h)$ plays an important role in the classification of complex simple Lie algebras. 

The following structure theorem is yet another classical result concerning semisimple Lie algebras. 
\vspace{0.2cm}
\begin{mythm}\cite{Dixmier1}, \cite{Humphreys} \

Let $\g$ be a semisimple Lie algebra and $\h$ be a Cartan subalgebra. Let $R=R(\g,\h)$ and let $\KK$ be the Killing form of $\g$. Then : 
\begin{enumerate}
\item $\g=\h\displaystyle\oplus(\displaystyle\underset{\alpha\in R}\oplus \g^{\alpha})$, and $\dim \g^{\alpha}=1$ for all $\alpha\in R$. 
\item If $\alpha,\beta\in R$, then $[\g^\alpha, g^\beta]\subset \g^{\alpha+\beta}$. If $\alpha\in R$, then $-\alpha \in R$ and $\h_\alpha=[\g^\alpha,\g^{-\alpha}]$ is a one-dimensional subspace of $\h$. It contains one element $H_\alpha$ such that $\alpha(H_\alpha)=2$. This element is called the coroot corresponding to $\alpha$. 
\item If $\alpha+\beta\neq 0$ then $\g^\alpha$ and $\g^\beta$ are orthogonal (with respect to $\KK$). Moreover the restriction of $\KK$ to $\h\times\h$ is non-degenerate, and if $x,y\in\h$, then $\KK(x,y)=\displaystyle\sum_{\alpha\in R}\alpha(x)\alpha(y)$. 
\end{enumerate}
\end{mythm}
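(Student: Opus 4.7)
The plan is to exploit the fact that $\h$ consists of commuting semisimple elements so that $ad\,\h$ is a family of simultaneously diagonalizable operators on $\g$. I would begin by showing $\h$ is abelian (using that every $ad_h$ is semisimple together with invariance of $\KK$), which yields a weight decomposition $\g = \bigoplus_{\lambda \in \h^*}\g^\lambda$ with $\h \subset \g^0 = C_\g(\h)$.

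The first substantive step is to prove $\g^0 = \h$. I would first dispatch part (3)'s orthogonality relation: invariance of $\KK$ gives $(\alpha(h)+\beta(h))\KK(x,y)=0$ for $x\in\g^\alpha$, $y\in\g^\beta$ and any $h\in\h$, so $\KK(\g^\alpha,\g^\beta)=0$ whenever $\alpha+\beta\neq 0$. In particular $\g^0$ is $\KK$-orthogonal to every other weight space, so non-degeneracy of $\KK$ on $\g$ forces $\KK|_{\g^0\times\g^0}$ to be non-degenerate. Then for $x\in C_\g(\h)$, the abstract Jordan decomposition $x=x_s+x_n$ (both summands again in $C_\g(\h)$) places $x_s$ in a toral subalgebra containing $\h$, hence in $\h$ by maximality, and non-degeneracy of $\KK|_{C_\g(\h)}$ eliminates the nilpotent part $x_n$. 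This establishes part (1) modulo $\dim\g^\alpha=1$, together with non-degeneracy of $\KK|_{\h\times\h}$ appearing in part (3).

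Part (2) then proceeds as follows. The inclusion $[\g^\alpha,\g^\beta]\subset\g^{\alpha+\beta}$ is a one-line Jacobi identity check. If $-\alpha\notin R$, then $\g^\alpha$ would be $\KK$-orthogonal to every weight space, hence to all of $\g$, contradicting non-degeneracy; so $-\alpha\in R$. The main technical hurdle is $\dim\g^\alpha=1$: I would pick nonzero $e_\alpha\in\g^\alpha$ and $f_\alpha\in\g^{-\alpha}$ with $\KK(e_\alpha,f_\alpha)\neq 0$, form $h_\alpha:=[e_\alpha,f_\alpha]\in\h$, and rule out $\alpha(h_\alpha)=0$ by noting that it would make $\C e_\alpha\oplus\C f_\alpha\oplus\C h_\alpha$ a three-dimensional solvable algebra in which $ad_{h_\alpha}$ is simultaneously semisimple and nilpotent, forcing $h_\alpha=0$, contrary to $\KK(e_\alpha,f_\alpha)\neq 0$. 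Rescaling produces an $sl_2$-triple, and applying $sl_2$-representation theory to the $ad$-stable subspace $\C h_\alpha\oplus\bigoplus_{k\in\Z\setminus\{0\}}\g^{k\alpha}$ forces $\dim\g^\alpha=1$ and rules out $k\alpha$ with $|k|\geq 2$ being a root. The normalization $\alpha(H_\alpha)=2$ then singles out $H_\alpha$.

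The remaining trace formula $\KK(x,y)=\sum_{\alpha\in R}\alpha(x)\alpha(y)$ for $x,y\in\h$ is an immediate computation in the decomposition afforded by part (1): $ad_x\,ad_y$ acts on $\g^\alpha$ as the scalar $\alpha(x)\alpha(y)$, annihilates $\h=\g^0$, and each root space is one-dimensional. I expect the main obstacles to be the self-centralizing property $\g^0=\h$ (which requires the abstract Jordan decomposition) together with the $sl_2$-triple argument establishing $\dim\g^\alpha=1$; the remaining assertions then follow mechanically from invariance of $\KK$ and the Jacobi identity.
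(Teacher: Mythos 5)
The paper offers no proof of this theorem — it is stated as a classical result with citations to Dixmier and Humphreys — and your sketch is precisely the standard argument from those references (simultaneous diagonalization of $ad\,\h$, self-centralization of $\h$ via the abstract Jordan decomposition, the $sl_2$-triple argument for $\dim\g^\alpha=1$, and the trace computation for the last formula), so it is correct in outline and there is nothing in the paper to compare it against. The one step stated more glibly than it deserves is the elimination of the nilpotent part $x_n$ in proving $C_\g(\h)=\h$: before non-degeneracy of $\KK|_{C_\g(\h)}$ can kill $x_n$ one must know that $tr(ad_{x_n}ad_y)=0$ for all $y\in C_\g(\h)$, which in the standard treatment requires first showing that $C_\g(\h)$ is nilpotent and then abelian.
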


\begin{myex}
Let $\g=sl(2, \C)$. $\g$ has a basis $\left\{h=\left(\begin{array}{cc}1&0\\0&-1\end{array}\right),e=\left(\begin{array}{cc}0&1\\0&0\end{array}\right),f=\left(\begin{array}{cc}0&0\\1&0\end{array}\right)\right\}$ with relations $[h,e]=2e$, $[h,f]=-2f$, $[e,f]=h$. Note that $\h=\C h$ is a Cartan subalgebra of $\g$ and $\g$ has $2$ roots : $\alpha$ and $-\alpha$ where $\alpha\in\h^*$ is given by $\alpha(h)=2$. The coroot corresponding to $\alpha$ is $h$ and the coroot corresponding to $-\alpha$ is $-h$.  
\end{myex}
\vspace{0.1cm}
\begin{myprop}
A submodule $N$ of a weight module $M$ is a weight module and $supp\mbox{ }N\subset supp\mbox{ }M$. 
\end{myprop}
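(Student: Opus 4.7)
The plan is to show that for any $n\in N$, each homogeneous component of $n$ in the weight decomposition of $M$ actually lies in $N$; once this is established, the rest is immediate.

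First I would pick an arbitrary $n\in N$ and write, using the weight decomposition $M=\bigoplus_{\lambda\in\mathrm{supp}\,M}M_\lambda$, a unique finite expression $n=m_{\lambda_1}+m_{\lambda_2}+\dots+m_{\lambda_k}$ with pairwise distinct weights $\lambda_i\in\mathrm{supp}\,M$ and nonzero $m_{\lambda_i}\in M_{\lambda_i}$. The goal becomes to show $m_{\lambda_i}\in N$ for each $i$.

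The main step — and the main (small) obstacle — is to separate these components using the action of the Cartan subalgebra $\h$. Since the $\lambda_i$ are pairwise distinct linear functionals on $\h$, the set of $h\in\h$ on which any two coincide is a finite union of proper hyperplanes, and so I can choose $h\in\h$ with the scalars $\mu_i:=\lambda_i(h)$ pairwise distinct. For each $0\le j\le k-1$ the element $h^j n=\sum_{i=1}^k \mu_i^{\,j}\,m_{\lambda_i}$ lies in $N$, because $N$ is a $\g$-submodule. This gives a linear system whose coefficient matrix is the Vandermonde matrix $(\mu_i^{\,j})$, which is invertible by the pairwise distinctness of the $\mu_i$. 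Solving the system expresses each $m_{\lambda_i}$ as a $\C$-linear combination of $n,hn,\dots,h^{k-1}n$, all of which lie in $N$; hence $m_{\lambda_i}\in N\cap M_{\lambda_i}$.

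Setting $N_\lambda:=N\cap M_\lambda$, the argument above shows $N=\sum_{\lambda\in\h^*}N_\lambda$, and this sum is automatically direct since it is a subsum of the direct sum $\bigoplus_{\lambda}M_\lambda$. Thus $N$ is a weight module with $N_\lambda=N\cap M_\lambda$. Finally, if $\lambda\notin\mathrm{supp}\,M$ then $M_\lambda=0$ and so $N_\lambda=0$, which gives the inclusion $\mathrm{supp}\,N\subset\mathrm{supp}\,M$ and completes the proof.
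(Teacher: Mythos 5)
Your proof is correct. The underlying idea — separate the homogeneous components of an element of $N$ by acting with $\h$ — is the same as in the paper, but the mechanism differs: the paper peels off one component at a time by subtracting $\lambda_1(h)v$ from $hv$ and inducting on the number of nonzero summands, whereas you pick a single $h\in\h$ with $\lambda_1(h),\dots,\lambda_k(h)$ pairwise distinct (possible since each $\ker(\lambda_i-\lambda_j)$ is a proper subspace and a vector space over the infinite field $\C$ is not a finite union of proper subspaces) and then invert a Vandermonde matrix to obtain every component at once. The Vandermonde route is more compact and avoids the induction bookkeeping; the paper's subtraction argument is slightly more elementary in that, at each step, it only needs an $h$ separating two given weights rather than all of them simultaneously. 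Both are standard and fully rigorous, and your handling of the final step (identifying $N_\lambda=N\cap M_\lambda$, directness of the sum, and the support inclusion) matches the paper's.
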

\begin{proof}
We will prove the following claim by induction : if $v\in N$ is a vector that can be written as $v=v_1+v_2+\ldots+v_n$ for some $n\in\Z_{\geq0}$ with $v_i\in M_{\lambda_i}\setminus\{0\}, i\in\{1,\ldots,n\}$ and $\lambda_i\neq\lambda_j$ for $i\neq j$ then $v_i\in N_{\lambda_i}$ and $N_{\lambda_i}=N\cap M_{\lambda_i}, i\in\{1,\ldots,n\}$.  

For $n=1$ the claim is obvious. Suppose the claim is true for all $k<n$. Let $v=v_1+v_2+\ldots+v_n\in N$ with $v_i\in M_{\lambda_i}\setminus\{0\}, i\in\{1,\ldots,n\}$. For $h\in\h$ we have  
$$hv=\lambda_1(h)v_1+\ldots+\lambda_n(h)v_n\in N. $$
Hence $hv-\lambda_1(h)v\in N$ and so  
$$(\lambda_2(h)-\lambda_1(h))v_2+\ldots+(\lambda_n(h)-\lambda_1(h))v_n\in N. \mbox{ }\mbox{ }\mbox{ }\mbox{ }\mbox{ }\mbox{ }\mbox{(1)}$$
Note that $(\lambda_k(h)-\lambda_1(h))v_k\in M_{\lambda_k}$. Moreover, since $\lambda_2\neq\lambda_1$, there exists $h_0\in\h$ such that $\lambda_2(h_0)-\lambda_1(h_0)\neq 0$. This implies that not all terms in (1) are $0$ so we may apply the induction hypothesis. Taking into account that $(\lambda_2(h)-\lambda_1(h))v_2\neq 0$, we get $v_2\in N_{\lambda_2}$. Obviously we also get $N_{\lambda_2}=N\cap M_{\lambda_2}$. Applying the induction hypothesis to $v-v_2$ completes the induction step.  

Now let $\tilde{N}$ be the direct sum of all non-trivial $N_\lambda$. Suppose $N\neq\tilde{N}$. Let $v\in N\setminus\tilde{N}$. Since $M$ is weight module, we get $v=v_1+\ldots+v_k$ for some $v_i\in M_{\lambda_i}\setminus\{0\}, i\in\{1,\ldots,k\}$ and $\lambda_i\neq \lambda_j$ for $i\neq j$. But by the previous claim we get $v_i\in N_{\lambda_i}$ and $N_{\lambda_i}=N\cap M_{\lambda_i} $for $i\in\{1,\ldots,k\}$. This implies $v\in\tilde{N}$, which is a contradiction. Hence $N=\tilde{N}$, and so $N$ is a weight module. Moreover, the above argument also shows that $supp\mbox{ }N\subset supp\mbox{ }M$. Thus we are done.   
\end{proof}
The set of roots $R(\g,\h)$ can be partitioned into two disjoint subsets $R_+$ and $R_-$, which we call the set of positive roots and the set of negative roots. These subsets of the partition are closed under addition\footnote[1]{Here closed under addition means that if $\alpha\in R_+$(or $R-$), $\beta\in R_+$(or $R_-$) and $\alpha+\beta\in R(\g, \h)$, then $\alpha+\beta\in R_+$(or $R_-$). } and for any root $\alpha\in R_{\pm}$ we have $-\alpha\in R_{\mp}$. Define : 
$$\n_+=\displaystyle\sum_{\alpha\in R_+}\g^\alpha,\mbox{ }\mbox{ }\mbox{ }\mbox{ }\mbox{ }\mbox{ }\mbox{ }\mbox{ }\n_-=\displaystyle\sum_{\alpha\in R_-}\g^\alpha$$
$$\b_+=\h+\n_+\mbox{ }\mbox{ }\mbox{ }\mbox{ }\mbox{ }\mbox{ }\mbox{ }\mbox{ }\mbox{ }\mbox{ }\mbox{ }\mbox{ }\mbox{ }\b_-=\h+\n_-$$
The subalgebras $\b_+$ and $\b_-$ are called Borel subalgebras\footnote[2]{In fact $\b_-$ is called the opposite Borel subalgebra. } and they will be used later on when discussing Verma modules and highest weight modules. 

The classification of simple finite-dimensional complex Lie algebras goes back to E. Cartan and W. Killing. This classification is in fact a classification of the corresponding root systems. We will not present these results here and point the reader to the influential paper \cite{Dynkin} of E. Dynkin for more details.  
\vspace{0.2cm}
\subsection{The enveloping algebra of a Lie algebra}\

Let $\g$ be a complex Lie algebra with bracket $[\cdot,\cdot]$. Let $\TT(\g)$ be the tensor algebra of $\g$ : 
$$\TT(\g)=\C\oplus\g\oplus(\g\otimes\g)\oplus\ldots$$
Define 
$$\UU(\g)=\TT(\g)/L,$$
where $L$ is the two-sided ideal generated by the elements $x\otimes y-y\otimes x-[x,y]$ with $x,y\in\g$. 
Let $(g_i)_{i\in I}$ be a basis of $\g$. 
\begin{mythm} Poincare-Birkhoff-Witt, \cite{Dixmier1}\

The monomials $g_{i_1}^{r_1}g_{i_2}^{r_2}\ldots g_{i_n}^{r_n}$ with $i_1<\ldots<i_n$ and $r_j\in\Z_{\geq 0},j\in\{1\ldots n\}$ form a basis of $\UU(\g)$
\end{mythm}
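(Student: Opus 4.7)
The plan is to prove both spanning and linear independence of the ordered monomials in $\UU(\g)$, with the spanning part being essentially a rewriting argument and the linear independence part requiring the construction of an auxiliary representation.

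For the spanning statement, I would show by induction that every element of $\UU(\g)$ is a linear combination of ordered monomials. Since $\UU(\g)$ is generated as an algebra by the images of the $g_i$'s, it suffices to show that the left ideal $\widetilde{U}$ spanned by ordered monomials is closed under left multiplication by each $g_i$. Given an ordered monomial $g_{i_1}^{r_1}\cdots g_{i_n}^{r_n}$ and a generator $g_i$ with $i > i_1$, the defining relation of $\UU(\g)$ gives $g_i g_{i_1} = g_{i_1} g_i + [g_i, g_{i_1}]$, where $[g_i, g_{i_1}]$ is again in the span of the $g_k$'s. A double induction, first on total degree $r_1+\cdots+r_n$ and then on a suitable measure of disorder (e.g.\ the number of inversions relative to the order on $I$), shows that successive applications of this commutation relation reduce $g_i \cdot (g_{i_1}^{r_1}\cdots g_{i_n}^{r_n})$ to a linear combination of ordered monomials.

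For linear independence, I would produce a representation of $\UU(\g)$ in which the ordered monomials act as linearly independent operators. Let $V$ be the polynomial algebra $\C[z_i]_{i \in I}$ on symbols $z_i$ indexed by the basis, so that $V$ has as a basis the commutative ordered monomials $z_{i_1}^{r_1}\cdots z_{i_n}^{r_n}$. I would define an action of each $g_i$ on $V$ recursively: on an ordered monomial $M = z_{j_1}^{s_1}\cdots z_{j_m}^{s_m}$, set $g_i \cdot M = z_i\,M$ when $i \le j_1$, and when $i > j_1$ peel off one factor and set
\[
g_i \cdot (z_{j_1}M') := z_{j_1}\,(g_i \cdot M') + \bigl([g_i, g_{j_1}]\bigr) \cdot M',
\]
where the second term is defined by induction on degree. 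This action extends linearly from $\g$ to the tensor algebra $\TT(\g)$, and one must verify that it factors through $\UU(\g)$, i.e.\ that $(g_i g_j - g_j g_i - [g_i, g_j]) \cdot v = 0$ for every $v \in V$. Once this is established, applying an arbitrary ordered monomial $g_{i_1}^{r_1}\cdots g_{i_n}^{r_n} \in \UU(\g)$ to the constant $1 \in V$ produces $z_{i_1}^{r_1}\cdots z_{i_n}^{r_n}$ plus terms of strictly lower total degree, so distinct ordered monomials in $\UU(\g)$ act on $1$ to give linearly independent vectors in $V$, whence they are linearly independent in $\UU(\g)$.

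The main obstacle is verifying that the recursive definition of the $g_i$-action on $V$ is well-defined and descends to a representation of $\UU(\g)$. Well-definedness requires checking that the recursion does not depend on the choice of factor peeled off in the case of repeated indices, and compatibility with the Lie bracket amounts to the identity $g_i(g_j v) - g_j(g_i v) = [g_i, g_j]\cdot v$ for all $i,j$ and all ordered monomials $v$. This reduces, via a careful induction on the total degree of $v$ and on $\max(i,j)$, to an application of the Jacobi identity in $\g$; it is precisely the bookkeeping in this step that forms the heart of the argument, and it is where the assumption that $I$ is totally ordered enters essentially.
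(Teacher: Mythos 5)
The paper does not prove the Poincar\'e--Birkhoff--Witt theorem; it merely cites it to \cite{Dixmier1}, so there is no in-paper argument to compare against. Your outline is, however, a correct rendering of the classical proof (essentially Birkhoff's construction, also the one in \cite{Dixmier1} and in Jacobson and Humphreys): a rewriting argument by double induction on total degree and number of inversions to get spanning, and the construction of an auxiliary $\g$-module structure on the polynomial algebra $\C[z_i]_{i\in I}$ to get linear independence. You correctly identify the two real subtleties: that the recursive definition of the action must be shown to be independent of which factor is peeled off, and that compatibility with the bracket, $g_i(g_j v)-g_j(g_i v)=[g_i,g_j]\cdot v$, reduces via induction to the Jacobi identity. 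One remark worth making explicit in a final write-up: in this construction, acting by an ordered monomial $g_{i_1}^{r_1}\cdots g_{i_n}^{r_n}$ on $1\in V$ yields \emph{exactly} $z_{i_1}^{r_1}\cdots z_{i_n}^{r_n}$ (no lower-order corrections), because at each stage the leading index satisfies the ordering condition $i\le j_1$; you state the weaker ``plus lower-degree terms,'' which also suffices but slightly obscures why the conclusion is immediate. Finally, note that your proof applies to any Lie algebra over a field with a totally ordered basis, which is important here since the thesis later invokes PBW for the infinite-dimensional algebras $gl(\infty,\C)$, $sl(\infty,\C)$, $o(\infty,\C)$, $sp(\infty,\C)$; you should state that the index set $I$ need not be finite.
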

\vspace{0.1cm}
\begin{myrmk}
$\g$ is a subspace of $\UU(\g)$. Moreover, for every $\g$-module $M$, there exist an unique compatible $\UU(\g)$-module structure on $M$. 
\end{myrmk}\vspace{0.2cm}

For $u\in\g$ define the maps $L(u),R(u):\UU(\g)\longrightarrow\UU(\g)$ by $v\mapsto uv$ and $v\mapsto vu$ respectively. These maps endow $\UU(\g)$ with the structures of a left and a right $\g$-module. The corresponding representations are called left respectively right regular representations of $\g$ in $\UU(\g)$. Moreover, the map $L-R$ also induces a left action of $\g$ on $\UU(\g)$ and the corresponding representation is called the adjoint representation of $\g$ in $\UU(\g)$. From now on, when using a $\g$-module structure of $\UU(\g)$, we refer to the adjoint representation of $\g$ in $\UU(\g)$.  
\newline

Let $n\in\N$. Let $\UU_n(\g)=\C\oplus\g\oplus(\g\otimes\g)\oplus\ldots(\underbrace{\g\otimes\ldots\otimes\g}_{\mbox{n times}})$, $\UU_0(\g)=\C$, $\UU_1(\g)=\C\oplus \g$, $\UU_n(\g)\UU_p(\g)\subset\UU_{n+p}(\g)$. The sequence $(\UU_n(\g))_{n\geq 0}$ is termed the canonical filtration of $\UU(\g)$. 
\newline

\begin{myrmk}
For all $n\geq 0$, $\UU_n(\g)$ is a finite-dimensional $\g$-submodule of $\UU(\g)$ with respect to the adjoint action of $\g$ on $\UU(\g)$. Given this, it will follow, as a consequence of the following result of H. Weyl, that $\UU(\g)$ is a sum of finite-dimensional simple $\g$-modules. 
\end{myrmk}
\vspace{0.2cm}

\begin{mythm}Weyl's semi-simplicity theorem, \cite{Humphreys}

Let $\g$ be a finite-dimensional semisimple Lie algebra. Then every finite-dimensional $\g$-module is semisimple. 
\end{mythm}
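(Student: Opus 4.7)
The plan is to follow the classical Casimir-element proof. The key tool is the \emph{Casimir element} attached to the non-degeneracy of $\KK$. Choose a basis $(x_i)$ of $\g$ and let $(y_i)$ be the dual basis with respect to $\KK$; set $C = \sum_i x_i y_i \in \UU(\g)$. Using the $\g$-invariance of $\KK$, I would check the standard computation showing that $C$ is independent of the basis and lies in the center of $\UU(\g)$. More generally, if $\rho:\g\to\mathfrak{gl}(V)$ is any representation whose trace form $\beta_V(x,y)=\mathrm{tr}(\rho(x)\rho(y))$ is non-degenerate on $\g$ (which, by Cartan's criterion applied in the semisimple setting, happens for example whenever $\rho$ is faithful), one gets a central element $C_V$ acting on $V$ by an operator of trace $\dim\g\neq 0$, hence by Schur's lemma by a non-zero scalar on every simple summand on which $\g$ acts non-trivially.

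Next I would reduce the theorem to the following special case: every short exact sequence of finite-dimensional $\g$-modules
\begin{equation*}
0\longrightarrow U\longrightarrow W\longrightarrow \C\longrightarrow 0
\end{equation*}
with trivial quotient splits. The reduction is the usual $\mathrm{Hom}$-trick: given an arbitrary sequence $0\to U\to W\to V\to 0$, the functor $\mathrm{Hom}_\C(V,-)$ produces $0\to\mathrm{Hom}(V,U)\to\mathrm{Hom}(V,W)\to\mathrm{Hom}(V,V)\to 0$ in $\g$-Mod, and pulling back the trivial line $\C\cdot\mathrm{id}_V\subset\mathrm{Hom}(V,V)$ yields exactly an extension of $\C$ by $\mathrm{Hom}(V,U)$. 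A $\g$-equivariant splitting of the pullback furnishes a $\g$-equivariant lift $V\to W$ of $\mathrm{id}_V$, hence the desired splitting.

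To handle the reduced case, I would induct on $\dim U$. If $U$ has a proper non-zero submodule $U'$, applying the inductive hypothesis to $0\to U/U'\to W/U'\to\C\to 0$ and then to $0\to U'\to\pi^{-1}(U/U')_{\mathrm{split}}\to U/U'\to 0$ reassembles a splitting. So it suffices to treat $U$ simple. If $\g$ acts trivially on $U$, then since $\g=[\g,\g]$ for $\g$ semisimple, $\g$ must also act trivially on all of $W$, giving a trivial splitting. Otherwise, let $\rho$ be the representation $\g\to\mathfrak{gl}(W)$ and look at its image; replacing $\g$ by the quotient acting effectively on $U$ (still semisimple as a quotient of a semisimple algebra, by Theorem~2.2.4), the trace form is non-degenerate and we may build the corresponding Casimir $C_W$. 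This $C_W$ is central and $\g$-invariant, hence commutes with the whole action on $W$; it acts by $0$ on the trivial quotient $\C$ and by a non-zero scalar on the simple $U$. Therefore $\ker C_W\subset W$ is a $\g$-submodule projecting isomorphically onto $\C$, which provides the splitting.

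The main technical obstacle will be the first step: verifying rigorously that $C$ is central in $\UU(\g)$ and, in the inductive step, ensuring that the Casimir used actually acts as a non-zero scalar on the simple submodule $U$. The latter requires a careful separation of the \emph{trivial} from the \emph{non-trivial} case, together with the fact that the trace form of a faithful representation of a semisimple Lie algebra is non-degenerate; this in turn rests on Cartan's criterion for semisimplicity and the observation that the kernel of a representation is an ideal, so quotients of semisimple Lie algebras remain semisimple.
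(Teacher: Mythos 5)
The paper does not actually prove this theorem: it is stated and attributed to Humphreys without proof, and is then used as a black box to establish that $\UU(\g)$ decomposes into finite-dimensional simple $\g$-submodules (Remark~2.3.3). So there is no internal proof to compare against. Your argument is the standard Casimir-element proof, which is precisely the one found in the cited reference, and it is essentially correct.

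One step deserves more care. You assert that because the Casimir $C_V$ has trace $\dim\g\neq 0$ on $V$, it ``hence by Schur's lemma'' acts by a non-zero scalar on every simple summand on which $\g$ acts non-trivially. That inference is not valid as stated: a non-zero total trace does not preclude the scalar vanishing on some individual non-trivial summand, and Schur's lemma only yields that the action is \emph{some} scalar. The correct general fact (positivity of the Casimir eigenvalue on non-trivial simples) requires a different argument. Fortunately, in the one place you actually use the claim --- the codimension-one extension $0\to U\to W\to\C\to 0$ with $U$ simple and non-trivial --- the non-vanishing follows cleanly from the trace computation you already set up: $C_W$ kills the trivial quotient, so $\operatorname{tr}(C_W|_W)=\lambda\cdot\dim U$ where $\lambda$ is the scalar on $U$, while $\operatorname{tr}(C_W|_W)=\dim\g'$ (for $\g'$ the effectively acting semisimple quotient), so $\lambda=\dim\g'/\dim U\neq 0$. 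It is better to make that local argument explicit than to appeal to the too-strong general principle. Also, in the inductive reduction, the sequence you write after splitting modulo $U'$ should have quotient $\C$, not $U/U'$; this looks like a typo but should be fixed, since the point is to apply the codimension-one case again.
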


We will now prove the claim of Remark $2.3.3.$ We start with a lemma.  
\begin{mylma}
Let $x_1, x_2, \ldots, x_n\in \g$ and $\sigma$ be a permutation of the set $\{1,\ldots,n\}$. Then $x_1x_2\ldots x_n-x_{\sigma(1)}x_{\sigma(2)}\ldots x_{\sigma(n)}\in \UU_{n-1}(\g)$. 
\end{mylma}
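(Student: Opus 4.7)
The plan is to induct on the length $\ell(\sigma)$ of $\sigma$ as a reduced word in the simple transpositions $s_i = (i,i+1)$, $1 \leq i \leq n-1$. This is the natural induction because the defining relation $xy - yx = [x,y]$ of $\UU(\g)$ gives us control over exactly the swap of two adjacent factors in a monomial.

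Base case: if $\ell(\sigma) = 0$ then $\sigma$ is the identity, and the difference is $0 \in \UU_{n-1}(\g)$.

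Inductive step: assume $\ell(\sigma) \geq 1$ and write $\sigma = \tau \circ s_i$ with $\ell(\tau) = \ell(\sigma) - 1$. Then $x_{\sigma(j)} = x_{\tau(s_i(j))}$, so passing from the $\tau$-monomial to the $\sigma$-monomial has the effect of swapping the factors $x_{\tau(i)}$ and $x_{\tau(i+1)}$ in adjacent positions. Hence
$$x_{\tau(1)}\cdots x_{\tau(n)} - x_{\sigma(1)}\cdots x_{\sigma(n)} = x_{\tau(1)}\cdots x_{\tau(i-1)}\bigl(x_{\tau(i)}x_{\tau(i+1)} - x_{\tau(i+1)}x_{\tau(i)}\bigr)x_{\tau(i+2)}\cdots x_{\tau(n)},$$
and by the defining relation of $\UU(\g)$ the middle parenthesis equals $[x_{\tau(i)}, x_{\tau(i+1)}] \in \g$. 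The right-hand side is therefore a product of $n-1$ elements of $\g$, which by the definition of the canonical filtration lies in $\UU_{n-1}(\g)$. The induction hypothesis applied to $\tau$ gives $x_1\cdots x_n - x_{\tau(1)}\cdots x_{\tau(n)} \in \UU_{n-1}(\g)$, and summing the two differences closes the induction.

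I do not expect a genuine obstacle: the argument is purely formal once one notices that simple transpositions generate $S_n$ and that each adjacent swap costs exactly one factor of filtration degree, which is absorbed by the bracket. The only point requiring a bit of care is the bookkeeping when rewriting $\sigma = \tau \circ s_i$ so that the swap actually happens between adjacent positions in the product (rather than between the values $i$ and $i+1$), but this is resolved by composing on the right rather than on the left.
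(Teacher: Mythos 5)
Your proof is correct and takes essentially the same approach as the paper, which simply states that it suffices to treat the case of an adjacent transposition and then appeals to the defining relation of $\UU(\g)$; you have supplied the induction on $\ell(\sigma)$ that makes this reduction precise.
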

\begin{proof} It is enough to prove the statement in the case $\sigma$ is the transposition $(i\mbox{ }i+1)$, which follows easily from the construction of $\UU(\g)$.  
\end{proof}

The claim of Remark $2.3.3.$ is proved now as follows. Given the adjoint action of $\g$ in $\UU(\g)$, let $M$ be the sum of all simple finite-dimensional submodules of $\UU(\g)$. Suppose $M\neq\UU(\g)$. Pick $u\in\UU(\g)\setminus M$. Obviously $u\neq 0$ and $u$ lies in some $\UU_n(\g)$. Let $N=\UU(\g)u$ be the cyclic module generated by $u$. Using Lemma $2.3.5.$ it is easy to show that $N$ is a submodule of $\UU_n(\g)$. Since $\UU_n(g)$ is finite-dimensional, it follows that $N$ is also finite-dimensional. By Weyl's theorem, we obtain that $N$ is semisimple. But then $N\subset M$ which gives $u\in M$: contradiction ! Thus $M=\UU(\g)$ and so the conclusion follows. \qed

\vspace{0.2cm}
Given the filtered algebra $\UU(\g)$, define the vector spaces $G^n(\g)=\UU_n(\g)/\UU_{n-1}(\g)$ (by convention $\UU_{-1}(\g)=0$) and let $G=\displaystyle\sum_{i\in\Z_{\geq0}} G^i$. The multiplication in $\UU(\g)$ determines, by passing to the quotient, a multiplication in $G$ which makes $G$ an associative algebra with unity. $G$ is termed the graded algebra associated with the filtered algebra $\UU(\g)$.   

\vspace{0.1cm}

Let $S(\g)=\TT(\g)/J$ where $J$ is the two-sided ideal generated by the elements $x\otimes y - y\otimes x$ with $x,y\in\g$. $S(\g)$ is termed the symmetric algebra of $\g$. 
\vspace{0.2cm}
\begin{myrmk}
It follows from Lemma $2.3.5.$ that $G$ is a commutative algebra. Moreover, the canonical injection of $\g$ into $G$ can be uniquely extended to a homomorphism $\theta$ of $S(\g)$ into $G$. It is well known that $\theta$ is an algebra isomorphism \cite{Dixmier1}. This latter statement is usually considered as a part of the Poincare-Birkhoff-Witt Theorem.   
\end{myrmk}
\vspace{0.2cm}
Define the following map $\gamma:S(\g)\longrightarrow\UU(\g)$ : 
\newline
For $x_1, x_2,\ldots, x_n\in \g$ let $\gamma(x_1x_2\ldots x_n)=\displaystyle\frac{1}{n!}\displaystyle\sum_{\sigma\in \SS_n}x_{\sigma(1)}x_{\sigma(2)}\ldots x_{\sigma(n)}$. The map $\gamma$ is termed the canonical bijection\footnote[1]{It is standard result that $\gamma$ is bijective \cite{Dixmier1}. Sometimes $\gamma$ is called the symmetrization map. } of $S(\g)$ onto $\UU(\g)$. 
\vspace{0.2cm}
\begin{myrmk}
$S(\g)$ has a natural $\g$-module structure given by $x\cdot(x_1x_2\ldots x_n)=\displaystyle\sum_{i=1}^nx_1x_2\ldots [x, x_i]\ldots x_n$, and it can be shown that $\gamma$ is a $\g$-module isomorphism.  However, the map $\gamma$ is not an algebra isomorphism. 
\end{myrmk}
\vspace{0.2cm}
From now on, we will identify $G$ with $S(\g)$ via $\theta$. Given the $\g$-module isomorphism $\gamma$, one can translate problems concerning $\UU(\g)$ to problems concerning $S(\g)$. A situation when this idea proves very useful is when trying to determine the structure of the center of $\UU(\g)$.\

Let $Z(\g)$ denote the center of $\UU(\g)$. It will play a vital role for the future results of the thesis. This is mainly due to the following statement. 
\vspace{0.1cm}
\begin{myprop}
Let $M$ be a simple $\g$-module. Then $Z(\g)$ acts on $M$ via scalars. This action defines a homomorphism $\chi:Z(\g)\rightarrow k$ called the central character of $M$. 
\end{myprop}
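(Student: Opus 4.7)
The plan is to combine two standard facts: each central element gives a $\g$-module endomorphism of $M$, and for countable-dimensional simple modules over $\C$ any such endomorphism is a scalar (Dixmier's version of Schur's lemma).

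First I would observe that since $Z(\g)$ lies in the center of $\UU(\g)$, for any $z\in Z(\g)$ the map $\varphi_z:M\to M$, $m\mapsto zm$, is a $\UU(\g)$-module endomorphism of $M$: for $u\in\UU(\g)$ and $m\in M$ we have $\varphi_z(um)=zum=uzm=u\varphi_z(m)$, using $zu=uz$. By Remark $2.3.2.$, a $\UU(\g)$-endomorphism is the same as a $\g$-endomorphism, so $\varphi_z\in\mathrm{End}_{\g}(M)$.

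Next I would invoke the following form of Schur's lemma: if $M$ is a simple $\g$-module of at most countable dimension over $\C$, then $\mathrm{End}_{\g}(M)=\C\cdot\mathrm{id}_M$. To apply this I need the hypothesis on dimension, which I would verify as follows. Since $\g$ is finite-dimensional, the PBW basis of Theorem $2.3.1.$ shows $\dim_{\C}\UU(\g)=\aleph_0$. Pick any nonzero $m_0\in M$; simplicity gives $M=\UU(\g)m_0$, so $\dim_{\C}M\leq\aleph_0$. Then $\mathrm{End}_{\g}(M)$ embeds into $M$ via $f\mapsto f(m_0)$ (injective because $f$ is determined on the cyclic generator), so $\mathrm{End}_{\g}(M)$ is a $\C$-algebra of countable dimension. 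By the usual Schur argument it is also a division algebra, and any countable-dimensional division algebra over the uncountable algebraically closed field $\C$ equals $\C$: an element $d$ not in $\C$ would be transcendental, and the family $\{(d-\lambda)^{-1}\}_{\lambda\in\C}$ would then be $\C$-linearly independent, contradicting countable dimension.

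Combining the two steps, for each $z\in Z(\g)$ there is a unique scalar $\chi(z)\in\C$ with $\varphi_z=\chi(z)\cdot\mathrm{id}_M$, i.e.\ $zm=\chi(z)m$ for all $m\in M$. The map $\chi:Z(\g)\to\C$ is then automatically an algebra homomorphism: linearity and multiplicativity of $\chi$ follow from those of $z\mapsto\varphi_z$ together with the fact that scalars are read off uniquely (e.g.\ $(\chi(z_1z_2)-\chi(z_1)\chi(z_2))m_0=0$ for the chosen $m_0\neq 0$ forces equality).

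I expect the main obstacle to be the dimension-counting step justifying the stronger Schur lemma; the algebraic manipulations are otherwise routine. Once countable dimension of $M$ and of $\mathrm{End}_{\g}(M)$ is in place, the transcendence argument is the real content, and it is where the hypothesis that the base field is $\C$ (uncountable and algebraically closed) is essential.
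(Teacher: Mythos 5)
Your proof is correct, and it takes a genuinely different route from the paper. The paper's argument reduces the claim to showing that the endomorphism $\phi:m\mapsto zm$ is \emph{algebraic} over $\C$: it observes that by cyclicity of $M$ and centrality of $z$, it suffices to exhibit a single eigenvector of $\phi$; it then obtains algebraicity either from Cayley--Hamilton in the finite-dimensional case or, in general, by citing Lemma $2.3.9$ (which is Dixmier's result $2.6.4$ on endomorphisms of simple modules over filtered algebras with finitely generated commutative associated graded algebra) as a black box, and extracts an eigenvalue from the minimal polynomial. Your argument instead proves directly that $\mathrm{End}_{\g}(M)=\C\cdot\mathrm{id}_M$ by the division-algebra form of Schur's lemma: you bound $\dim_{\C}\mathrm{End}_{\g}(M)$ by $\dim_{\C}M\leq\dim_{\C}\UU(\g)=\aleph_{0}$ and then run the Amitsur/Quillen transcendence trick, using that $\C$ is uncountable and algebraically closed, so a transcendental element would give uncountably many linearly independent $(d-\lambda)^{-1}$. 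Both proofs ultimately hinge on the same cardinality mismatch between $\dim\UU(\g)$ and $|\C|$; the paper packages that into the cited lemma, whereas you make it explicit and self-contained, which is arguably the more elementary presentation. The only thing your version loses is the greater generality of the cited lemma (which applies to filtered algebras beyond $\UU(\g)$), but for the statement at hand that generality is not needed.
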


\begin{proof}

Pick $z\in Z(\g)$. Note that if $m\neq 0$ then $M=\UU(\g)m$. Since $z\in Z(\g)$, then if $zm=\lambda m$ for some $\lambda\in \C$, we have $zx=\lambda x$ for all $x\in M$. It is then enough to prove that there is such an $m_0$ on which $z$ acts by a scalar. 

Consider the $\g$-endomorphism of $M$ given by $m\mapsto zm$. Assume that the endomorphism $\phi : m\mapsto zm$ is algebraic over $\C$. Let $p$ be the minimal polynomial of this endomorphism. Let $\lambda$ be a root of $p$. So $p(X)=(X-\lambda)q(X)$ with $q(\phi)\neq 0$. Then pick an $m\in M$ such that $q(\phi)m\neq 0$. But then note that : $zq(\phi)m=\lambda q(\phi)m$, and by previous considerations we are done. 

Hence all we have to show is that $\phi$ is algebraic over $\C$. Of course, for finite-dimensional $M$ this follows immediately from the Hamilton-Cayley theorem. For our purposes, this will suffice. However, a more general statement is true. In this regard, we present the following lemma.  


\vspace{0.1cm}
\begin{mylma}
Let $A$ be a filtered algebra. Assume that the associated graded algebra $gr(A)$ is finitely generated and commutative. Let $M$ be a simple $A$-module and let $D$ be the set of $A$-endomorphisms of $M$. Then every element $f\in D$ is algebraic over $\C$. 
\end{mylma}

The proof of this lemma is rather irrelevant for this thesis so we point the reader to \cite{Dixmier1} 2.6.4 for a complete argument. Of course, one can see immediately that if we set $A=\UU(\g)$, then the conditions of the lemma are satisfied. Indeed, $M$ is a simple $\g$-module so it is also a simple $\UU(\g)$-module ; $\UU(\g)$ has a natural filtration and the associated graded algebra, $S(\g)$, is obviously abelian and finitely generated by the image of any base of $\g$ through $\gamma^{-1}$ where $\gamma$ is the canonical bijection defined above. 

Thus $\phi$ is algebraic and we are done. \qedhere

\end{proof}


\vspace{0.2cm}
\begin{myrmk}
Modules in which $Z(\g)$ acts via a homomorphism as above are said to admit a central character (usually denoted $\chi$). The converse of Proposition $2.3.1$ is false for obvious reasons, for instance, one can just consider the direct sum of two copies of a simple module $M$.    
\end{myrmk}
\vspace{0.2cm}

\begin{mydef}
Given a $\g$-module $M$, call an element $m\in M$ to be $\g$-invariant if $gm=0$ for all $g\in\g$. Then the set set of $\g$-invariants elements of $M$ form a subspace denoted $M^{\g}$. Clearly $M^{\g}$ is the unique maximal trivial submodule of $M$. 
\end{mydef}
\vspace{0.2cm}

\begin{myrmk}
Taking into account the canonical $\g$-module isomorphism $\gamma$, it is completely clear that $\gamma^{-1} (Z(\g))=S(\g)^{\g}$. Let $Y(\g)=S(\g)^{\g}$. 
\end{myrmk}\vspace{0.2cm}

The structure theories of $Z(\g)$ and that of $\UU(\g)$ as a module over $Z(\g)$ are very important to the study of primitive ideals in $\UU(\g)$. This is perhaps a motivation for the next section which is a short review of these theories. 
 
\vspace{0.2cm}
\begin{mydef}
A $\g$-module $M$ is called a highest weight module if there exist a nonzero weight vector $m$ of some weight $\lambda$ such that $m$ is annihilated by the action of the positive root spaces and $M=\UU(\g)m$. The weight $\lambda$ is called the highest weight of $M$ and $m$ is called a highest weight vector of $M$.  
\end{mydef}

\subsection{The center $Z(\g)$}

We will follow the general structure as presented in \cite{Dixmier1}. However, most of the proofs are skipped since they can all be found in full detail in the book. 

The following lemma shows the non-triviality of $Z(\g)$. 

\begin{mylma}
Let $\{x_i\}_{i\in \{1\ldots n\}}$ be a basis of $\g$. Identify $\g$ and $\g^*$ via a $\g$-invariant bilinear form (say the Killing form)\footnote[1]{For the Killing form, the identification is given by $x\mapsto \KK(x,\cdot)$. } and let $\{y_i\}_{i\in \{1\ldots n\}}\in\g$ be a basis dual to $\{x_i\}_{i\in\{1\ldots n\}}$ with respect to the bilinear form chosen. Let $\phi=\displaystyle\sum\limits_{i=1}^{n} x_iy_i\in\UU(\g)$. Then $\phi\in Z(\g)$. The element $\phi$ is termed the Casimir element (associated to the bilinear form chosen). 
\end{mylma}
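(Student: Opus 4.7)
My plan is to show directly that $\phi$ commutes with every element of $\g \subset \UU(\g)$; since $\g$ generates $\UU(\g)$ as an algebra, this will force $\phi \in Z(\g)$. Equivalently, working with the adjoint representation of $\g$ in $\UU(\g)$ (Remark $2.3.3.$ and the identification $Z(\g) = \UU(\g)^{\g}$), it suffices to verify that $x_k \cdot \phi = 0$ for every basis element $x_k$.

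The first step is to expand, using the fact that the adjoint action is a derivation of the associative product,
$$x_k \cdot \phi = \sum_{i=1}^n \bigl([x_k, x_i]\, y_i + x_i\, [x_k, y_i]\bigr).$$
Next I would introduce the structure constants $[x_k, x_i] = \sum_j a^{j}_{ki}\, x_j$ and $[x_k, y_i] = \sum_j b^{j}_{ki}\, y_j$, so that the expression becomes $\sum_{i,j}\bigl(a^{j}_{ki}\, x_j y_i + b^{j}_{ki}\, x_i y_j\bigr)$.

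The key computation is to relate the two sets of constants using the $\g$-invariance of the bilinear form $B$ (which for the Killing form says $B([u,v],w) + B(v,[u,w]) = 0$) together with the duality $B(x_i, y_j) = \delta_{ij}$. Pairing the defining identities against dual vectors gives
$$a^{j}_{ki} = B([x_k, x_i], y_j) = -B(x_i, [x_k, y_j]), \qquad b^{j}_{ki} = B(x_j, [x_k, y_i]),$$
and invariance applied to the second expression yields $b^{j}_{ki} = -B([x_k, x_j], y_i) = -a^{i}_{kj}$.

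Substituting this identity into the expansion and relabelling $i \leftrightarrow j$ in the second sum produces
$$x_k \cdot \phi = \sum_{i,j} a^{j}_{ki}\, x_j y_i - \sum_{i,j} a^{i}_{kj}\, x_i y_j = \sum_{i,j} a^{j}_{ki}\, x_j y_i - \sum_{i,j} a^{j}_{ki}\, x_j y_i = 0,$$
so $\phi$ is central. The only real obstacle is the bookkeeping — getting the signs right from invariance and ensuring that the cancellation happens \emph{before} any reordering in $\UU(\g)$ is required, which is why the argument must be carried out symbolically rather than by trying to normal-order via PBW first.
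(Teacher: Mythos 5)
Your argument is correct and is essentially the same as the paper's: both expand $[x,\phi]$ by the derivation property, introduce structure constants for the adjoint action on the two dual bases, and use invariance of the bilinear form to produce the skew-symmetry relation between the two families of constants (your $b^{j}_{ki}=-a^{i}_{kj}$ is the paper's $-\lambda_{ij}=\mu_{ji}$) that makes the two sums cancel after relabelling. The only difference is cosmetic notation for the structure constants.
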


\begin{proof}
It is enough to show that $\phi$ commutes with every $x\in\g$. So let $[x,x_i]=\displaystyle\sum_{j=1}^{n}\lambda_{ij}x_j, [x,y_i]=\displaystyle\sum_{j=1}^{n}\mu_{ij}y_j$. Then the assumption that $(x_i)_i$ and $(y_i)_i$ are dual to each other translates into $-\lambda_{ij}=\mu_{ji}$. Indeed, we have $K([x_i, x], y_j)=K(x_i, [x, y_j])$. We get 
$$K(-\displaystyle\sum_{k=1}^n\lambda_{ik}x_k,y_j)=K(x_i,\displaystyle\sum_{k=1}^n\mu_{jk}y_k). $$
Hence
$$-\displaystyle\sum_{k=1}^n\lambda_{ij}K(x_k,y_j)=\displaystyle\sum_{k=1}^n \mu_{jk}K(x_i, y_k). $$
This immediately implies $-\lambda_{ij}=\mu_{ji}$. 
But then  
$$[x,\displaystyle\sum_i x_iy_i]=\displaystyle\sum_i [x,x_i]y_i+\displaystyle\sum_i x_i[x,y_i]=\displaystyle\sum_{ij}\lambda_{ij}x_jy_i+\mu_{ij}x_iy_j=0. $$ 
\end{proof}\vspace{0.2cm}

\begin{myrmk}
Since $\phi\in Z(\g)$, it acts on simple $\g$-modules by a scalar $q\in\C$. This scalar encodes essential information about the module. For instance, for the Lie algebra $sl(2, \C)$, all simple weight modules can be constructed using $q$ and another parameter $v_{hw}\in\C/\Z$. 
\end{myrmk}
\vspace{0.2cm}

We now turn our attention to the structure of $Z(\g)$. Recall that $Z(\g)$ corresponds to the subalgebra $Y(\g)=S(\g)^\g$ in $S(\g)$, through the $\g$-module isomorphism $\gamma$. This argument, along with the identification of $\g$ with $\g^*$ as above, redirects our focus towards the structure of $S(\g^{*})^\g$. To this end, we present the following result.

\begin{mylma}
Let $\pi$ be a finite-dimensional representation of $\g$, and let $m\in \N$, The function $f_{\pi, m}: x\mapsto tr(\pi(x)^m)$ on $\g$ belongs to $S(\g^{*})^\g$.
\end{mylma}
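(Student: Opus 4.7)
The plan is to identify $S(\g^{*})$ with the algebra of polynomial functions on $\g$ and then check separately that $f_{\pi,m}$ lies in this algebra and is annihilated by the natural $\g$-action. First I would note that if $V$ denotes the representation space of $\pi$, then $\pi:\g\to\mathrm{End}(V)$ is linear, so every entry of $\pi(x)^{m}$ is a polynomial of degree $m$ in the linear coordinates of $x$. Hence $f_{\pi,m}(x)=\mathrm{tr}(\pi(x)^{m})$ is a polynomial function on $\g$, i.e.\ an element of $S(\g^{*})$.

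For the invariance, I would recall that under the coadjoint action on $\g^{*}$ extended to $S(\g^{*})$ by derivations, the action of $y\in\g$ on a polynomial function $f$ on $\g$ is
$$(y\cdot f)(x)=-\,Df(x)\bigl([y,x]\bigr),$$
so the goal reduces to verifying $Df_{\pi,m}(x)([y,x])=0$ for all $x,y\in\g$. A direct computation, expanding $(\pi(x)+t\pi(z))^{m}$ and using the cyclicity of the trace to rotate factors of $A:=\pi(x)$, gives the clean formula $Df_{\pi,m}(x)(z)=m\,\mathrm{tr}(A^{m-1}\pi(z))$.

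Specializing $z=[y,x]$ and invoking the fact that $\pi$ is a Lie algebra homomorphism, so that $\pi([y,x])=[\pi(y),\pi(x)]=[B,A]$ with $B:=\pi(y)$, one gets $Df_{\pi,m}(x)([y,x])=m\,\mathrm{tr}(A^{m-1}[B,A])$, which vanishes after one more application of cyclicity since $\mathrm{tr}(A^{m-1}BA)=\mathrm{tr}(A^{m}B)=\mathrm{tr}(A^{m-1}AB)$. This yields $y\cdot f_{\pi,m}=0$ for every $y\in\g$, i.e.\ $f_{\pi,m}\in S(\g^{*})^{\g}$.

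No genuine obstacle is expected; the only delicate point is setting up the $\g$-module structure on $S(\g^{*})$ correctly (coadjoint action extended as derivations), after which the verification is essentially two uses of $\mathrm{tr}(XY)=\mathrm{tr}(YX)$ together with the homomorphism property of $\pi$. An equally short alternative is to pass to the adjoint group $G$: combining $\pi(\mathrm{Ad}(g)x)=\Pi(g)\pi(x)\Pi(g)^{-1}$ with conjugation-invariance of the trace gives $f_{\pi,m}\circ\mathrm{Ad}(g)=f_{\pi,m}$, whose derivative at the identity is precisely the infinitesimal invariance proved above.
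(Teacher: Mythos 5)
Your proof is correct and follows essentially the same strategy as the paper: compute the derivation-extended (co)adjoint action of $y\in\g$ on $f_{\pi,m}$ and show it vanishes by the cyclicity of the trace. The paper phrases this via the associated symmetric $m$-linear form and a telescoping sum of commutators, while you compute the differential of the polynomial directly to obtain $Df_{\pi,m}(x)(z)=m\,\mathrm{tr}(\pi(x)^{m-1}\pi(z))$ before one more use of cyclicity, but the underlying cancellation is identical.
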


\begin{proof}
If $S^k(\g^{*})$ denotes the span of the symmetric tensors of degree $k$, then $f_{\pi, m}\in S^m(\g^{*})$. Any element of $S^k(\g^*)$ is a symmetric $m$-linear form. We have  
$$f_{\pi, m}(x_1,\ldots,x_m)=\displaystyle\frac{1}{m!}\displaystyle\sum_{\sigma\in \SS_m}tr(\pi(x_{\sigma(1)})\ldots\pi(x_{\sigma(m)})), $$
where here $\SS_k$ denotes the $k^{th}$ symmetric group. Hence  
$$m!(xf_{\pi, m})(x_1,\ldots,x_m)=\displaystyle\sum_{\sigma\in\SS_m}\displaystyle\sum\limits_{i=1}^{m}tr(\pi(x_{\sigma(1)})\ldots[\pi(x),\pi(x_{\sigma(i)})]\ldots\pi(x_{\sigma(m)}))$$
$$\mbox{ }\mbox{ }\mbox{ }\mbox{ }\mbox{ }\mbox{ }\mbox{ }\mbox{ }\mbox{ }\mbox{ }\mbox{ }\mbox{ }\mbox{ }\mbox{ }\mbox{ }\mbox{ }\mbox{ }\mbox{ }\mbox{ }\mbox{ }\mbox{ }\mbox{ }\mbox{ }\mbox{ }\mbox{ }\mbox{ }\mbox{ }\mbox{ }\mbox{ }\mbox{ }\mbox{ }\mbox{ }\mbox{ }\mbox{ }\mbox{ }\mbox{ }\mbox{ }\mbox{ }\mbox{ }\mbox{ }\mbox{ }\mbox{ }\mbox{ }\mbox{ }\mbox{ }\mbox{ }=\displaystyle\sum_{\sigma\in \SS_m}tr(\pi(x)\pi(x_{\sigma(1)})\ldots\pi(x_{\sigma(m)}))-tr(\pi(x_{\sigma(1)})\ldots\pi(x_{\sigma(m)})\pi(x))=0.$$
This gives $xf_{\pi, m}=0$ thus ending the proof. 
\end{proof}

\begin{myex}
Let $\g=sl(2, \C)$ with the standard basis $e,f,h$. For $m=1$ $tr(\pi(\cdot))=0$. For $m=2$, let $\pi$ be the natural representation i.e. $\pi$ is the inclusion $sl(2, \C)\longrightarrow gl(2, \C)$. Then $f_{\pi, m}$ is $x\mapsto tr(\pi(x)^2),\hspace{0.2cm}x\in\g$, and the corresponding symmetric bilinear form is $(x,y)\mapsto\displaystyle\frac{1}{2}( tr(\pi(x)\pi(y))+tr(\pi(y)\pi(x)))=\displaystyle\frac{1}{2}(tr(xy)+tr(yx))=tr(xy)\hspace{0.2cm} x, y\in\g$. 

For $x\in\g$ set $x=a_xh+b_xe+c_xf\hspace{0.2cm}a_x, b_x, c_x\in\C$. An easy computation shows that $f_{\pi, m}(x)=2(a_x^2+b_xc_x)$ and that : $\KK(h, x)=8a_x,\hspace{0.2cm}\KK(e, x)=4c_x,\hspace{0.2cm}\KK(f, x)=4b_x$. Hence $f_{\pi, m}=\displaystyle\frac{1}{32}(\KK(h, \cdot)^2 + 4\KK(e, \cdot)\KK(f, \cdot))$. Passing back in $S(\g)^{\g}$, we get an element $\tilde{f}_{\pi, m}$ given by $\tilde{f}_{\pi, m}=\displaystyle\frac{1}{32}(\tilde{h}^2+4\tilde{e}\tilde{f})$ where $\tilde{h}, \tilde{e}$ and $\tilde{f}$ are the images in $S(\g)$ of the standard basis of $\g$ through $\gamma^{-1}$. The image of this element in $\UU(\g)$ is then  
$$\bar{f}_{\pi, m}=\displaystyle\frac{1}{32}(h^2+2(ef+fe))\in Z(\g). $$
This is (up to scaling) the standard Casimir element in $Z(\g)$. 
\end{myex}
\newpage
\begin{mydef}\

\begin{enumerate}
\item Let $\g$ be a semisimple Lie algebra and $\h$ a splitting Cartan subalgebra. Let $\alpha$ be a root of $\g$ and $H_\alpha$ be the corresponding coroot. Define the reflection $s_\alpha:\h^{*}\longrightarrow \h^*, s_\alpha(\lambda)=\lambda-\lambda(H_\alpha)\alpha$. Note that $s_\alpha^2=1$ so $s_\alpha$ is an automorphism of $\h^{*}$ and moreover $s_\alpha$ preserves the bilinear form $K(\cdot,\cdot)$ on $\h^*$. 
\item Let $W$ be the group of automorphisms of $\h^*$ generated by $s_\alpha$ for all roots $\alpha$ of $\g$. $W$ is termed the Weyl group of $\g$. 
\end{enumerate}
\end{mydef}
\vspace{0.1cm}
\begin{myex}
Let $\g=sl(2, \C)$. Then $\h^{*}$ is one dimensional, hence there is only one reflection : $\lambda\mapsto -\lambda$. Consequently, $W$ is isomorphic to $\Z/2\Z$. 
\end{myex}
\vspace{0.1cm}

Denote the set of polynomial functions $f\in S(\h^{*})$ for which $wf=f$ for all $w\in W$ (i.e. $f$ is $W$ invariant) by $S(\h^{*})^W$, and the subset of $W$ invariant elements which are homogeneous of degree $m$ by $S^m(\h^{*})^W$. 
 
The role of $S(\h^{*})^W$ is not at all obvious but the following theorem shows that this subspace is closely connected to $S(\g^{*})^\g$ which, as noted earlier, is relevant for the structure of $Z(\g)$. It is a classical result that every element of $S(\h^{*})^W$ is a linear combinations of functions as in Lemma $2.4.3.$. This in turn gives an explicit set of generating vectors for the subspace $S(\g^{*})^\g$.   
\vspace{0.1cm}
\begin{mythm}\cite{Bourbaki}, \cite{LS}\

Let $i:S(\g^{*})\longrightarrow S(\h^{*})$ be the restriction homomorphism. 
\begin{enumerate}
\item The mapping $i_{|S(\g^{*})^\g}$ is an isomorphism between $S(\g^{*})^\g$ and $S(\h^{*})^W$. 
\item For $m\in\N$ the space $S^m(\g^{*})^\g$ is the set of linear combinations of functions of the form $x\mapsto tr(\pi(x)^m)$, where $\pi$ runs over all finite-dimensional representations of $\g$. 
\end{enumerate}
\end{mythm}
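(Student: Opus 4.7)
The strategy is to prove both parts in tandem, reducing the surjectivity in (1) to (2). I split (1) into three pieces: the map $i|_{S(\g^*)^\g}$ lands in $S(\h^*)^W$; it is injective; it is surjective. For the first piece, each reflection $s_\alpha \in W$ is the restriction to $\h$ of the adjoint action of some $g_\alpha$ in the adjoint group $G$ of $\g$, obtained by exponentiating a suitable element of the standard $sl(2, \C)$-copy attached to the root $\alpha$. Since $\g$-invariance integrates to $G$-invariance in characteristic zero, every $f \in S(\g^*)^\g$ restricts to a $W$-invariant polynomial on $\h$.

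For injectivity I would use the standard density argument. Every semisimple element of $\g$ is $G$-conjugate to an element of $\h$, via the Jordan decomposition together with the conjugacy of Cartan subalgebras. The set of regular semisimple elements is a non-empty Zariski-open, hence Zariski-dense, subset of $\g$, cut out by the non-vanishing of the discriminant polynomial. A $G$-invariant polynomial vanishing on $\h$ therefore vanishes on this dense set, hence identically.

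The main obstacle is surjectivity of (1) together with (2). By Lemma $2.4.3$, each $f_{\pi, m}$ already lies in $S^m(\g^*)^\g$, so it suffices to show that the restrictions $i(f_{\pi, m})$ span $S^m(\h^*)^W$ as $\pi$ varies over finite-dimensional representations. Restricting to $h \in \h$ one computes
$$f_{\pi,m}(h) = tr(\pi(h)^m) = \displaystyle\sum_\mu (\dim V^\mu)\mu(h)^m,$$
where $V^\mu$ is the $\mu$-weight space of the representation space $V$. By the Theorem of the Highest Weight and Weyl's semi-simplicity theorem, the weight systems of finite-dimensional representations exhaust all finite $W$-stable multisets of dominant integral weights. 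Taking suitable rational combinations of the $f_{\pi,m}$, one produces every orbit power sum $p_{m,\lambda}(h) = \displaystyle\sum_{\mu \in W\lambda}\mu(h)^m$ for $\lambda$ a dominant integral weight.

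The last step — showing that these orbit power sums span $S^m(\h^*)^W$ — is the technical heart of the argument. Since dominant integral weights are Zariski-dense in $\h^*$, it is enough to verify that orbit power sums of arbitrary weights span. Using Chevalley's theorem that $S(\h^*)^W$ is a polynomial algebra on $\dim \h$ homogeneous generators, together with Newton-type symmetric-function identities, one expresses each $W$-invariant of degree $m$ as a polynomial in the power sums $p_{k,\lambda}$ for $k \le m$ and isolates the degree-$m$ component. This is where the specifics of the root system genuinely enter, and is the step for which I would rely on \cite{Bourbaki} and \cite{LS}.
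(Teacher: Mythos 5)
The paper does not prove this theorem; it is stated as a classical fact with citations to \cite{Bourbaki} and \cite{LS}, so there is no proof in the paper against which to compare. Your overall strategy --- $W$-invariance via exponentiation inside the $sl(2)$-triples, injectivity via density of the semisimple locus, and reduction of surjectivity to part (2) by spanning with orbit power sums --- is the standard route and is essentially sound, but two steps need correction.

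First, the claim that ``the weight systems of finite-dimensional representations exhaust all finite $W$-stable multisets of dominant integral weights'' is false: the weight multiset of an irreducible $L(\lambda)$ is rigidly constrained by Kostant's multiplicity formula and is far from arbitrary. What you actually need, and what is true, is that the orbit sums $m_\lambda=\sum_{\mu\in W\lambda}e^\mu$ and the irreducible characters both form $\Z$-bases of the $W$-invariant character ring, related by a unitriangular integer matrix in the dominance order; this is what lets you pass from the $f_{\pi,m}$ to the orbit power sums $p_{m,\lambda}$ by integer combinations, and it should replace the incorrect justification. Second, the closing step --- that the $p_{m,\lambda}$ for $\lambda\in\h^*$ span $S^m(\h^*)^W$ --- does not need Chevalley's theorem or Newton identities, and as written the argument does not quite parse: a product of power sums of total degree $m$ is not itself a power sum, so ``isolating the degree-$m$ component'' of such a polynomial does not land you back in the span you are trying to establish. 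The clean argument is by averaging and polarization: the $W$-averaging projection on $S^m(\h^*)$ is the identity on invariants, hence surjects onto $S^m(\h^*)^W$; and $S^m(\h^*)$ is spanned (in characteristic zero) by pure $m$-th powers $\lambda^m$ with $\lambda\in\h^*$ by the polarization identity; therefore $S^m(\h^*)^W$ is spanned by the averages $\frac{1}{|W|}\sum_{w\in W}(w\lambda)^m = \frac{1}{|W|}p_{m,\lambda}$. Combined with your Zariski-density observation to pass to dominant integral $\lambda$, this closes the argument. With these two repairs your sketch becomes a correct proof.
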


\begin{myex}
Let $\phi=h^2+2(ef+fe)$ and consider its image in $S(\g^{*})$ i.e. the function $g(x)=\KK^2(h,x)\KK(h,x)+4\KK(e,x)\KK(f,x)$. Then $W$ acts on $\h^{*}$ by $\lambda\mapsto -\lambda$ so on $\h$ by $h\mapsto -h$. The restriction of the above function is clearly invariant under the action of $W$. 
\end{myex}

The following statement completes the structure description of the algebra $Z(\g)$. The first two parts are results concerning $Y(\g)$, which we recall to be the preimage of $Z(\g)$ in $S(\g)$ under the canonical $\g$-module isomorphism, while the third is the main result concerning $Z(\g)$. Note that $W$ acts on $\h^{*}$ and, by transport of structure via the isomorphism $\h^*\simeq \h$, it also acts on $\h$.

\begin{mythm}\cite{Dixmier1}\

\begin{enumerate}
\item Let $S(\h)^W$ be the set of $W$-invariant elements in $S(\h)$. Then the algebras $Y(\g)$ and $S(\h)^W$ are isomorphic. 
\item If $r$ is the rank of $\g$ (the dimension of a Cartan subalgebra) then there are $r$ algebraically independent elements in $S(\h)^W$ (and thus $r$ in $Y(\g)$) that generate the algebra $S(\h)^W$ (and thus $Y(\g)$). 
\item The algebra $Z(\g)$ is isomorphic to a polynomial algebra in $r$ indeterminates. 
\end{enumerate}
\end{mythm}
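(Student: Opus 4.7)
The plan is to combine three ingredients: Theorem 2.4.6 (the Chevalley restriction theorem stated just above), the Chevalley--Shephard--Todd theorem on invariants of finite reflection groups, and the symmetrization map $\gamma:S(\g)\to\UU(\g)$ from Section 2.3.

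For part (1), I would use the Killing form to obtain the $\g$-module isomorphism $\g\simeq\g^{*}$, which extends uniquely to a $\g$-equivariant algebra isomorphism $S(\g)\simeq S(\g^{*})$, and hence identifies $Y(\g)=S(\g)^{\g}$ with $S(\g^{*})^{\g}$. By Theorem 2.2.8(3) the restriction of $\KK$ to $\h\times\h$ is non-degenerate, giving an analogous identification $S(\h)\simeq S(\h^{*})$. The only subtle point is checking that the $W$-action transported from $\h^{*}$ to $\h$ through this identification is the action obtained from restricting $W$ to $\h$ via $\h^{*}\simeq\h$; this holds because each reflection $s_\alpha$ preserves $\KK$ on $\h^{*}$ by Definition 2.4.4(1). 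Composing all these identifications with part (1) of Theorem 2.4.6 gives $Y(\g)\simeq S(\g^{*})^{\g}\simeq S(\h^{*})^{W}\simeq S(\h)^{W}$.

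For part (2), I would invoke the Chevalley--Shephard--Todd theorem: whenever a finite group acts on a finite-dimensional complex vector space by reflections, the invariant subring of the symmetric algebra is a polynomial algebra on as many algebraically independent homogeneous generators as the dimension of the space. The Weyl group is generated by the reflections $s_\alpha$ on $\h^{*}$, and $\dim\h^{*}=r$, so the theorem applies directly, yielding $r$ algebraically independent homogeneous generators of $S(\h^{*})^{W}$; transporting through part (1) gives the same for $S(\h)^{W}$ and $Y(\g)$.

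For part (3), I would transfer polynomiality from $Y(\g)$ to $Z(\g)$ through $\gamma$. By Remark 2.3.8, $\gamma:S(\g)\to\UU(\g)$ is a $\g$-module isomorphism, so by Remark 2.3.11 it restricts to a linear isomorphism $\gamma:Y(\g)\to Z(\g)$. Pick homogeneous algebraically independent generators $y_1,\ldots,y_r$ of $Y(\g)$ from part (2), and set $z_i=\gamma(y_i)\in Z(\g)$. The key observation, read off directly from the defining formula of $\gamma$ together with Lemma 2.3.5, is that for homogeneous $a\in S^{p}(\g)$ and $b\in S^{q}(\g)$ one has $\gamma(a)\gamma(b)-\gamma(ab)\in\UU_{p+q-1}(\g)$. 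Passing to the graded algebra $G$ and using the identification $G\simeq S(\g)$ of Remark 2.3.6, this says precisely that the principal symbol of $\gamma(a)$ equals $a$. Consequently, any hypothetical polynomial relation among the $z_i$ would produce, after taking top symbols, a relation among the $y_i$, so the $z_i$ are algebraically independent; and a straightforward induction on filtration degree, using the induced embedding of the associated graded of $Z(\g)$ into $Y(\g)$, shows that $z_1,\ldots,z_r$ generate $Z(\g)$ as an algebra. The main obstacle of the proof is precisely this last step: controlling the failure of $\gamma$ to be an algebra homomorphism globally. The filtration/principal-symbol technique handles it cleanly because on invariants everything commutes, so the discrepancy $\gamma(a)\gamma(b)-\gamma(ab)$ is genuinely lower-order and does not interfere with algebraic independence or with spanning.
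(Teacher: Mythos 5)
The paper does not prove this theorem --- it is stated with a citation to Dixmier, and the section explicitly says most proofs are skipped. Your argument is correct and is essentially the route Dixmier himself takes. Part (1) correctly chains the Killing-form identifications $S(\g)\simeq S(\g^{*})$, $S(\h)\simeq S(\h^{*})$ with the restriction isomorphism of Theorem 2.4.6, and the check that the $W$-actions match across $\h\simeq\h^{*}$ is the right subtlety to flag. Part (2) is fine: Weyl groups are real reflection groups, so Chevalley--Shephard--Todd applies (though Chevalley's original theorem for Weyl groups would already suffice, and is what Dixmier uses). Part (3) is the substantive step and your filtration argument is sound: the identity $\gamma(a)\gamma(b)-\gamma(ab)\in\UU_{p+q-1}(\g)$ for $a\in S^{p}(\g)$, $b\in S^{q}(\g)$ follows from Lemma 2.3.5, it says the symbol of $\gamma(a)$ is $a$, and this is exactly what makes top symbols of polynomial expressions in $z_1,\ldots,z_r$ equal the corresponding expressions in $y_1,\ldots,y_r$, giving both algebraic independence and (by downward induction on filtration degree) generation. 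One small point worth making explicit in the induction: you need $\operatorname{gr}Z(\g)\subseteq Y(\g)$, which is immediate since for $z\in Z(\g)\cap\UU_{n}(\g)$ and $x\in\g$ one has $[x,z]=0\in\UU_{n-1}(\g)$, so the symbol of $z$ in $G^{n}\simeq S^{n}(\g)$ is $\g$-invariant. With that noted, the proposal is complete and matches the cited source's strategy.
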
 

We conclude this section with an important result due to Kostant which concerns the structure of $\UU(\g)$ as a module over $Z(\g)$. \

\begin{mythm} \cite{Kostant}\

 Let $Y_+(\g)$ be the subset of $Y(\g)$ consisting of the elements without constant term and let $H(\g)$ be a graded complement of the ideal $Y_+(\g)S(\g)$. Denote by $K(\g)$ the image of $H(\g)$ in $\UU(\g)$ under the canonical $\g$-module isomorphism. Then the linear mapping $g:K(\g)\otimes Z(\g)\longrightarrow \UU(\g), g(x,z)=xz$ is a vector space isomorphism, or equivalently, $\UU(\g)$ is a free $Z(\g)$-module. 
\end{mythm}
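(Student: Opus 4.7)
The plan is to prove the analogous statement in the commutative setting $S(\g)$ first and then transfer to $\UU(\g)$ by a filtered/graded argument, exploiting the fact that the symmetrization map $\gamma$ is a $\g$-module isomorphism that respects the adjoint decomposition into isotypic components.

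First I would establish the $S(\g)$-version of the theorem: that the multiplication map $\mu \colon H(\g) \otimes Y(\g) \longrightarrow S(\g)$ is a linear isomorphism. By construction, $H(\g)$ is a graded complement to $Y_+(\g) S(\g)$, so for degree reasons $\mu$ is surjective: any homogeneous $f \in S(\g)$ can be written as $h_0 + \sum y_i s_i$ with $h_0 \in H(\g)$, $y_i \in Y_+(\g)$, $s_i \in S(\g)$ of strictly smaller degree, so an induction on degree takes care of the $s_i$. The substantive content is injectivity, which is precisely the classical theorem of Kostant on harmonic polynomials for a semisimple Lie algebra: once one knows (from Theorem 2.4.4 above) that $Y(\g) \simeq S(\h)^W$ is a polynomial ring in $r$ generators and that the restriction map controls invariants, one obtains that $S(\g)$ is a free module over $Y(\g)$ with basis provided by any graded complement of $Y_+(\g) S(\g)$. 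I would quote this as the key input from invariant theory.

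Next I would transfer the statement via $\gamma$. Because $\gamma \colon S(\g) \to \UU(\g)$ is a $\g$-module isomorphism by Remark 2.3.4, it sends $\g$-invariants to $\g$-invariants, so $\gamma(Y(\g)) = Z(\g)$ and $\gamma(H(\g)) = K(\g)$ by definition. The obstacle is that $\gamma$ is \emph{not} an algebra map, so $\gamma(hy) \neq \gamma(h) \gamma(y)$ in general; this is exactly where the filtration enters. Using Lemma 2.3.5, a direct check shows that if $h \in H(\g)$ has degree $m$ and $z \in Y(\g)$ has degree $n$, then
\[
\gamma(h)\gamma(z) - \gamma(hz) \in \UU_{m+n-1}(\g),
\]
so that the product $\gamma(h)\gamma(z)$ and the image $\gamma(hz)$ agree modulo lower filtration. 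In other words, the associated graded of the multiplication map $g \colon K(\g) \otimes Z(\g) \to \UU(\g)$ is precisely the multiplication map $\mu \colon H(\g) \otimes Y(\g) \to S(\g)$ of the first step, under the identification $\mathrm{gr}\,\UU(\g) \simeq S(\g)$ (Remark 2.3.6).

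To conclude, I would filter $K(\g) \otimes Z(\g)$ by total degree inherited from the gradings on $H(\g)$ and $Y(\g)$, so that $g$ becomes a filtered map whose associated graded is $\mu$. A standard lemma then says that a filtered map of filtered vector spaces (with complete, exhaustive, Hausdorff filtrations in each filtration-degree, which is automatic here since every piece is finite dimensional) whose associated graded is an isomorphism is itself an isomorphism. Applying this with $\mu$ an isomorphism gives that $g$ is an isomorphism, which is precisely the conclusion; freeness of $\UU(\g)$ as a $Z(\g)$-module follows because $K(\g) \otimes Z(\g)$ is manifestly free over $Z(\g)$. The main obstacle in this plan is the invariant-theoretic input on the $S(\g)$ side, i.e., the fact that a graded complement of $Y_+(\g) S(\g)$ provides a free-module basis over $Y(\g)$; everything after that is a formal filtered-algebra argument.
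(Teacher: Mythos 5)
The paper states this theorem citing \cite{Kostant} and supplies no proof of its own, so there is no in-text argument to measure yours against. That said, your reduction is correct and is in spirit Kostant's own: establish the commutative separation of variables --- that the multiplication $H(\g)\otimes Y(\g)\to S(\g)$ is a linear isomorphism, i.e.\ $S(\g)$ is free over $Y(\g)$ with any graded complement of $Y_+(\g)S(\g)$ as basis --- and then transfer to $\UU(\g)$ via the symmetrization $\gamma$ and a filtered-to-graded argument. Your estimate $\gamma(h)\gamma(z)-\gamma(hz)\in\UU_{m+n-1}(\g)$ is right, and the appeal to the standard lemma (a filtered map of vector spaces with increasing, bounded-below, exhaustive filtrations whose associated graded is an isomorphism is itself an isomorphism) is the correct closing move; the invocation of ``complete/exhaustive/Hausdorff'' filtration conditions is a bit overloaded for this locally finite increasing setting, where boundedness below and exhaustiveness are all one needs, but this is cosmetic. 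The one place to tread carefully is your phrasing that the commutative freeness ``follows once one knows $Y(\g)\simeq S(\h)^W$ is a polynomial ring and the restriction map controls invariants'': freeness of $S(\g)$ over $Y(\g)$ is \emph{not} a formal consequence of the Chevalley isomorphism, and it is in fact the deepest result in Kostant's paper, established via the geometry of the nilpotent cone and the principal $sl(2,\C)$. You do correctly flag it as the key input to be quoted from invariant theory, so the logical skeleton is sound, but a reader could be misled into thinking the earlier theorems in this section already imply it.
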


\subsection{Verma modules}\

Now that we have set up the preliminaries for semisimple Lie algebras and their enveloping algebra, we turn our attention to a class of $\g$-modules ($\g$ is assumed semisimple) which was first introduced by Harish-Chandra in the late $1940$'s. This section is devoted to the study of these modules and some of their properties. 
 
 Let $\g$ be a semisimple Lie algebra and $\h$ a Cartan subalgebra. Let $R(\g, \h)$ be its root system and let $W$ be the Weyl group. Choose $B$ to be a basis for $R$. Denote by $R_+$ and $R_-$ the set of positive and negative roots of $\g$. Recall that  
$$\n_+=\displaystyle\sum_{\alpha\in R_+}\g^\alpha,\mbox{ }\mbox{ }\mbox{ }\mbox{ }\mbox{ }\mbox{ }\mbox{ }\mbox{ }\n_-=\displaystyle\sum_{\alpha\in R_-}\g^\alpha, $$
$$\b_+=\h+\n_+, \mbox{ }\mbox{ }\mbox{ }\mbox{ }\mbox{ }\mbox{ }\mbox{ }\mbox{ }\mbox{ }\mbox{ }\mbox{ }\mbox{ }\mbox{ }\b_-=\h+\n_-, $$
and set $\rho$ to be the half-sum of the positive roots. 

\begin{mydef}

In the notations above $\UU(\g)$ has a right $\UU(\b_+)$-module structure via multiplication on the right. For $\lambda\in\h^{*}$, let $\C_\lambda$ be the one-dimensional left $\b_+$-module structure defined by $\lambda-\rho$ (i.e. $\n_+$ acts trivially and the $\h$ acts via the homomorphism $\lambda-\rho:\h^*\longrightarrow \C$). Since $\UU(\g)$ is also a left $\g$-module, then the following tensor product  
$$M(\lambda)=\UU(\g)\displaystyle\otimes_{\UU(\b_+)}\C_\lambda$$
is also an $\UU(\g)$-module. This module is called a Verma module (with respect to the chosen Borel subalgebra)
\end{mydef}
\vspace{0.2cm}

For $\alpha\in R$, choose $X_\alpha\in \g^\alpha, X_\alpha\neq 0$. Set $\alpha_1,\ldots,\alpha_n$ to be the roots in $R_+$ and $H_1,\ldots,H_r$, a basis of $\h$. The following proposition shows that $M(\lambda)$ is a highest weight module. 

\begin{myprop}\cite{HC2}, \cite{Verma}

\begin{enumerate}
\item $M(\lambda)=\displaystyle\bigoplus_{\mu\in\h^{*}}M(\lambda)_\mu$. 
\item $M(\lambda)_\mu=\displaystyle\sum\limits_{p_i\in\N, \lambda-\rho-\mu=\sum p_i\alpha_i}X_{-\alpha_1}^{p_1}\ldots X_{-\alpha_n}^{p_n}\otimes\C_\lambda$  and so $dim$ $M(\lambda)_\mu=\#\{(n_\alpha)_{\alpha\in B} : n_\alpha\in\N, \displaystyle\sum\limits_{\alpha\in B}n_\alpha\alpha=\lambda-\rho-\mu\}$. 
\item $M(\lambda)$ is a highest weight module and 
$$M(\lambda)_{\lambda-\rho}=1\otimes\C_\lambda,\mbox{ }\mbox{ }\mbox{ } M(\lambda)=\UU(\n_-)M(\lambda)_{\lambda-\rho}, \mbox{ }\mbox{ }\mbox{ }\n_+M(\lambda)_{\lambda-\rho}=0.$$ 
\end{enumerate}
\end{myprop}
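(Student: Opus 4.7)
The plan is to reduce everything to the Poincaré-Birkhoff-Witt theorem applied to the triangular decomposition $\g = \n_- \oplus \h \oplus \n_+$. Ordering a PBW basis of $\g$ with negative root vectors first, then a basis of $\h$, then positive root vectors, Theorem 2.3.1 yields an isomorphism of right $\UU(\b_+)$-modules
$$\UU(\g) \simeq \UU(\n_-) \otimes_\C \UU(\b_+).$$
Tensoring with $\C_\lambda$ over $\UU(\b_+)$ produces a vector space isomorphism
$$M(\lambda) = \UU(\g)\otimes_{\UU(\b_+)} \C_\lambda \;\simeq\; \UU(\n_-)\otimes_\C \C_\lambda,$$
under which the PBW basis of $\UU(\n_-)$, consisting of the ordered monomials $X_{-\alpha_1}^{p_1}\cdots X_{-\alpha_n}^{p_n}$, gives a basis of $M(\lambda)$ of the form $X_{-\alpha_1}^{p_1}\cdots X_{-\alpha_n}^{p_n}\otimes 1_\lambda$, where $1_\lambda$ denotes the generator of $\C_\lambda$.

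Next I would verify that each such basis vector is a weight vector and compute its weight. Setting $v_\lambda := 1\otimes 1_\lambda$, the module structure on $\C_\lambda$ gives $h\cdot v_\lambda = (\lambda-\rho)(h)\, v_\lambda$ for all $h\in\h$. For a general PBW monomial I would use the identity $[h, X_{-\alpha_i}] = -\alpha_i(h) X_{-\alpha_i}$ in $\UU(\g)$ to push $h$ rightward through the monomial by induction on its degree; each factor $X_{-\alpha_i}^{p_i}$ contributes an additive shift of $-p_i\alpha_i(h)$, yielding
$$h\cdot\bigl(X_{-\alpha_1}^{p_1}\cdots X_{-\alpha_n}^{p_n}\otimes 1_\lambda\bigr) = \Bigl(\lambda - \rho - \textstyle\sum_i p_i\alpha_i\Bigr)(h)\cdot X_{-\alpha_1}^{p_1}\cdots X_{-\alpha_n}^{p_n}\otimes 1_\lambda.$$
Grouping the PBW basis according to these weights immediately gives the weight decomposition of (1) and the explicit description of $M(\lambda)_\mu$ in (2); the dimension formula then follows by counting the tuples $(p_1,\ldots,p_n)$ satisfying $\sum_i p_i\alpha_i = \lambda-\rho-\mu$.

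For (3), the only PBW monomial of weight $\lambda-\rho$ is the empty one: since all $\alpha_i\in R_+$ and $p_i\in\N$, the equation $\sum p_i\alpha_i = 0$ forces every $p_i=0$, so $M(\lambda)_{\lambda-\rho} = \C\, v_\lambda$. The identity $M(\lambda) = \UU(\n_-)v_\lambda$ is simply the PBW isomorphism read as a cyclic generation statement. Finally, for any $X\in\n_+\subset\b_+$ we have $X\cdot v_\lambda = X\otimes 1_\lambda = 1\otimes(X\cdot 1_\lambda) = 0$, because $\n_+$ annihilates $\C_\lambda$ by construction. The only step requiring genuine care is the inductive weight computation on a general PBW monomial; all the rest is formal manipulation of the tensor product $\UU(\g)\otimes_{\UU(\b_+)} \C_\lambda$ combined with a single application of PBW.
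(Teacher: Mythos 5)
The paper does not actually prove this proposition; it simply cites \cite{HC2} and \cite{Verma}. Your argument is the standard one those sources use, and it is correct: the PBW theorem for the triangular decomposition $\g=\n_-\oplus\h\oplus\n_+$ gives $\UU(\g)\simeq\UU(\n_-)\otimes_\C\UU(\b_+)$ as free right $\UU(\b_+)$-modules, so $M(\lambda)\simeq\UU(\n_-)\otimes_\C\C_\lambda$ with basis the shifted PBW monomials, and the commutation $[h,X_{-\alpha}]=-\alpha(h)X_{-\alpha}$ pins down the weight of each basis vector as $\lambda-\rho-\sum p_i\alpha_i$, from which (1)--(3) follow at once. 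One small remark: your dimension count is indexed by tuples $(p_1,\dots,p_n)$ over \emph{all} positive roots (the Kostant partition function), which is the correct reading; the paper's displayed index set $(n_\alpha)_{\alpha\in B}$ over the simple roots appears to be a typo.
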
\

\begin{myrmk}
It is a well known result that every highest weight module $V$ is isomorphic to a quotient of a Verma module. This implies that the highest weight space of any highest weight module is one-dimensional. 
\end{myrmk}
\vspace{0.1cm}
\begin{mycor}
Every highest weight module $V$ admits a central character. 
\end{mycor}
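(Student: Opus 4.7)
The plan is to reduce the statement to the fact, noted in Remark 2.5.3, that the highest weight space of any highest weight module is one-dimensional. Let $V$ be a highest weight module with highest weight $\mu \in \h^*$, and let $v \in V$ be a highest weight vector, so that $V = \UU(\g) v$, $\n_+ v = 0$, and $hv = \mu(h) v$ for all $h \in \h$.

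First I would verify that $zv$ lies in the highest weight space $V_\mu$ for every $z \in Z(\g)$. This uses only the centrality of $z$ and the properties of $v$: for $h \in \h$ we have $h(zv) = z(hv) = \mu(h)(zv)$, so $zv \in V_\mu$, and for $x \in \n_+$ we have $x(zv) = z(xv) = 0$. Since Remark 2.5.3 gives $V_\mu = \C v$, there exists a unique scalar $\chi(z) \in \C$ with $zv = \chi(z) v$.

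Next I would extend this scalar action from $v$ to all of $V$. Since $V = \UU(\g) v$ and $z$ commutes with every element of $\UU(\g)$, for any $u \in \UU(\g)$ we get
$$z(uv) = u(zv) = \chi(z)(uv),$$
so $z$ acts on $V$ by the scalar $\chi(z)$. Finally, $\chi : Z(\g) \to \C$ is an algebra homomorphism: for $z_1, z_2 \in Z(\g)$ and $\alpha \in \C$ one has $\chi(z_1 + \alpha z_2) v = (z_1 + \alpha z_2) v = (\chi(z_1) + \alpha \chi(z_2)) v$, and $\chi(z_1 z_2) v = z_1(z_2 v) = \chi(z_2)(z_1 v) = \chi(z_1)\chi(z_2) v$, whence $\chi$ is linear and multiplicative.

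There is essentially no obstacle here; the entire content of the argument is packaged in Remark 2.5.3 (the one-dimensionality of the highest weight space of a highest weight module), which is already granted. Alternatively, one could appeal to the cited fact that $V$ is a quotient of some Verma module $M(\lambda)$, reducing the corollary to the special case of Verma modules and then invoking Proposition 2.5.2 (which makes the weight space $M(\lambda)_{\lambda-\rho}$ one-dimensional by inspection), but this goes through the same mechanism.
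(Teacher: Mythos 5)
Your proof is correct and follows essentially the same route as the paper: show $zv_{hw}$ lies in the highest weight space by computing with $h\in\h$, invoke the one-dimensionality from Remark 2.5.3 to get $zv_{hw}=\chi(z)v_{hw}$, and then propagate the scalar action to all of $V$ via cyclicity and centrality. The only (small) additions you make beyond the paper's argument are the explicit check that $\n_+(zv_{hw})=0$ and the verification that $\chi$ is an algebra homomorphism, both of which are routine and implicit in the paper's treatment.
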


\begin{proof}
Pick $z\in Z(\g)$. Then since $V$ is cyclic, it is enough to prove that $z$ acts by a scalar on a highest weight vector $v_{hw}\neq 0$. Indeed, if $zv_{hw}=cv_{hw}$ then for every $v\in V$ we have $v=xv_{hw}$ for some $x\in\UU(\g)$. But then $zv=z(xv_{hw})=x(zv_{hw})=x(cv_{hw})=c(xv_{hw})=cm$.  

Now for all $h\in\h$ we have have  
$$h(zv_{hw})=z(hv_{hw})=z(\lambda(h)v_{hw})=\lambda(h)(zv_{hw}).$$
This means that $zv_{hw}$ is in the highest weight space. But the highest weight space is one-dimensional by Remark $2.5.3.$ Hence $zv_{hw}=cv_{hw}$ for some $c\in\C$ which, according to the above argument, completes the proof. 
\end{proof}
\begin{myprop}
\vspace{0.1cm}
$M(\lambda)$ has a unique simple quotient denoted by $L(\lambda)$. 
\end{myprop}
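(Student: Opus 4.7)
The plan is to exhibit a unique maximal proper submodule of $M(\lambda)$; the quotient by it will then automatically be the unique simple quotient. The key observation is that every proper submodule misses the highest weight line, so the sum of all proper submodules is still proper.

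First I would invoke Proposition $2.2.8$ to see that any submodule $N \subset M(\lambda)$ is a weight module, with $\text{supp}\,N \subset \text{supp}\,M(\lambda)$. Next I would prove the following claim: if $N \subsetneq M(\lambda)$ is a proper submodule, then $N_{\lambda-\rho} = 0$. Indeed, by part $(3)$ of Proposition $2.5.2$, the highest weight space $M(\lambda)_{\lambda-\rho}$ is one-dimensional, spanned by $v_{hw} := 1\otimes 1_\lambda$, and $M(\lambda) = \mathcal{U}(\mathfrak{g})v_{hw}$. Hence if $N_{\lambda-\rho}$ were nonzero, it would contain $v_{hw}$ (up to scalar), and then $N$ would contain $\mathcal{U}(\mathfrak{g})v_{hw} = M(\lambda)$, contradicting properness.

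Then I would let $N^{\ast}$ be the sum of all proper submodules of $M(\lambda)$. Since each summand is a weight module with trivial $(\lambda-\rho)$-weight space, and the weight space decomposition of $M(\lambda)$ is a direct sum (part $(1)$ of Proposition $2.5.2$), the sum $N^{\ast}$ also satisfies $(N^{\ast})_{\lambda-\rho} = 0$. In particular $v_{hw} \notin N^{\ast}$, so $N^{\ast} \subsetneq M(\lambda)$ is still proper. By construction $N^{\ast}$ contains every proper submodule, so it is the unique maximal proper submodule. The quotient $L(\lambda) := M(\lambda)/N^{\ast}$ is therefore simple, and any simple quotient of $M(\lambda)$ arises as $M(\lambda)/N$ for some maximal proper $N$; but $N = N^{\ast}$ by maximality, so $L(\lambda)$ is the unique such simple quotient.

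There is no serious obstacle here; the only point requiring care is justifying that the sum of submodules with vanishing $(\lambda-\rho)$-weight space again has vanishing $(\lambda-\rho)$-weight space, which is where Proposition $2.2.8$ (applied to $N^{\ast}$ itself, so that $(N^{\ast})_{\lambda-\rho} = N^{\ast} \cap M(\lambda)_{\lambda-\rho}$) does the work.
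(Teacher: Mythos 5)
Your argument is correct and essentially identical to the paper's: both show that every proper submodule misses the one-dimensional highest weight space $M(\lambda)_{\lambda-\rho}$, conclude the sum of all proper submodules is still proper, and quotient by this unique maximal proper submodule. The only slip is a citation: the weight-submodule fact you invoke is Proposition $2.2.10$ in the paper's numbering, not $2.2.8$.
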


\begin{proof} 
Let $N$ be a submodule of $M(\lambda)$. By Proposition $2.5.2.$ $M(\lambda)$ is a weight module. Taking into account Proposition $2.2.10.$ we get that $N$ is also a weight module. Moreover,  
$$N=\displaystyle\sum_{\mu\in\h^*}(N\cap M(\lambda)_\mu). $$
Note that if $N$ intersects $M(\lambda)_{\lambda-\rho}$ non-trivially then $N=M(\lambda)$ since $M(\lambda)_{\lambda-\rho}$ is one-dimensional and generates $M(\lambda)$. This implies that the sum $S$ of all submodules of $M(\lambda)$ distinct from $M(\lambda)$ is contained in the subspace $M'(\lambda)=\displaystyle\sum_{\mu\neq\lambda-\rho}M(\lambda)_\mu$.

It is obvious that $S$ is the largest submodule of $M(\lambda)$ distinct from $M(\lambda)$. This observation implies directly that $M(\lambda)/S$ is simple and that any other simple quotient is isomorphic to $M(\lambda)/S$. 
\end{proof}
The modules $L(\lambda)$ are simple highest weight modules and all simple highest weight modules arise from this construction. Indeed, the image of any highest weight vector of $M(\lambda)$ through the quotient will be a highest weight vector for $L(\lambda)$. Moreover, if a simple module is a highest weight module of highest weight $\lambda-\rho\in\h^*$, then it is a quotient of the Verma module $M(\lambda)$ and so, by Proposition $2.5.2.$, is isomorphic to $L(\lambda)$.  

Since $M(\lambda)$ is a highest weight module, it admits a central character. Denote the central character of $M(\lambda)$ by $\chi_\lambda$. The following two propositions concern properties of $\chi_\lambda$. Proofs and more details can be found in \cite{HC2} and \cite{Verma}. 

\begin{myprop}

$\chi_\lambda=\chi_{\lambda'}$ if and only if $\lambda'\in W\lambda$. 
\end{myprop}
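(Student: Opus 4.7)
The plan is to prove the statement via the Harish-Chandra homomorphism. First I would construct the projection $\pi \colon \UU(\g) \to \UU(\h) = S(\h)$ associated to the PBW decomposition
$$\UU(\g) = \UU(\h) \oplus \bigl(\n_-\UU(\g) + \UU(\g)\n_+\bigr).$$
For any $z \in Z(\g)$ and a highest weight vector $v \in M(\lambda)_{\lambda-\rho}$, the summand $\UU(\g)\n_+$ annihilates $v$, the summand $\n_-\UU(\g)$ sends $v$ into weight spaces strictly below $\lambda-\rho$, and $\h$ acts on $v$ by $\lambda-\rho$. Since $zv$ must again lie in $M(\lambda)_{\lambda-\rho}$ (as $z$ is central and $v$ is a weight vector of that weight), it follows that $zv = \pi(z)(\lambda-\rho)\,v$, where we view $\pi(z)\in S(\h)$ as a polynomial function on $\h^*$. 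Thus $\chi_\lambda(z) = \pi(z)(\lambda-\rho)$; define the $\rho$-shifted map $\phi \colon Z(\g) \to S(\h)$ by $\phi(z)(\mu) = \pi(z)(\mu-\rho)$, so that $\chi_\lambda(z) = \phi(z)(\lambda)$.

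Next I would invoke (or summarize) the classical theorem of Harish-Chandra that $\phi$ lands inside $S(\h)^W$ (with the \emph{unshifted} action of $W$ on $\h^*$) and is in fact an algebra isomorphism $\phi \colon Z(\g) \xrightarrow{\sim} S(\h)^W$; the $\rho$-shift is exactly what converts the natural ``dot action'' invariance into ordinary $W$-invariance. Given $\phi$, the easy direction $(\Leftarrow)$ is immediate: if $\lambda' = w\lambda$, then $\chi_{\lambda'}(z) = \phi(z)(w\lambda) = \phi(z)(\lambda) = \chi_\lambda(z)$ for all $z$.

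For the harder direction $(\Rightarrow)$, suppose $\chi_\lambda = \chi_{\lambda'}$, so that $p(\lambda) = p(\lambda')$ for every $p$ in the image of $\phi$, i.e.\ for every $W$-invariant polynomial on $\h^*$. If $\lambda' \notin W\lambda$, then $W\lambda$ and $W\lambda'$ are finite disjoint subsets of $\h^*$, so Lagrange interpolation yields $f \in S(\h)$ with $f(\lambda') = 1$, $f \equiv 0$ on $W\lambda$, and $f \equiv 0$ on $W\lambda' \setminus \{\lambda'\}$. Averaging, $g = \sum_{w \in W} w \cdot f \in S(\h)^W$ satisfies $g(\lambda) = 0$ and $g(\lambda') = |\mathrm{Stab}_W(\lambda')| \geq 1$, contradicting the equality of values on all invariants.

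The main obstacle is the Harish-Chandra theorem itself, specifically the $W$-invariance of $\mathrm{Im}\,\phi$. The standard route is to first verify $\chi_{s_\alpha \cdot \lambda} = \chi_\lambda$ for a simple reflection $s_\alpha$ and a weight $\lambda$ such that $\lambda(H_\alpha) \in \Z_{>0}$: in this integral dominant case one exhibits an explicit submodule of $M(\lambda)$ isomorphic to $M(s_\alpha \cdot \lambda)$, generated by $X_{-\alpha}^{\lambda(H_\alpha)} \otimes 1$, which forces equality of central characters. One then extends from this Zariski-dense set of $\lambda$ to all of $\h^*$ by noting that for fixed $z \in Z(\g)$ the function $\lambda \mapsto \phi(z)(\lambda) - \phi(z)(s_\alpha\lambda)$ is polynomial, and finally to arbitrary $w \in W$ since simple reflections generate $W$. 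Surjectivity onto $S(\h)^W$ can then be obtained by a graded dimension-counting argument using Kostant's theorem (Theorem~$2.4.7$ above) together with Theorem~$2.4.6$.
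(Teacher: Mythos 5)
The paper does not actually prove Proposition 2.5.6; immediately before it the text says ``Proofs and more details can be found in \cite{HC2} and \cite{Verma},'' so there is no internal proof to compare against. Your proposal supplies the standard argument via the Harish--Chandra homomorphism, and it is essentially correct: you identify $\chi_\lambda(z)=\pi(z)(\lambda-\rho)$ using the decomposition $\UU(\g)=\UU(\h)\oplus(\n_-\UU(\g)+\UU(\g)\n_+)$, you correctly observe that the $\rho$-shift turns dot-invariance into ordinary $W$-invariance, the Lagrange-interpolation plus averaging argument correctly shows that $S(\h)^W$ separates distinct $W$-orbits (which, together with surjectivity of $\phi$, gives the hard direction), and the sketch of how to establish the invariance (Verma-module embedding for $\lambda(H_\alpha)\in\Z_{>0}$, then Zariski density and generation of $W$ by simple reflections) is the textbook route.

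One small notational slip worth flagging: since the paper's convention already builds the $\rho$-shift into $M(\lambda)$ (its highest weight is $\lambda-\rho$), the submodule generated by $X_{-\alpha}^{\lambda(H_\alpha)}\otimes 1$ is isomorphic to $M(s_\alpha\lambda)$ under the \emph{ordinary} $W$-action, not $M(s_\alpha\cdot\lambda)$ under the dot action. Writing $s_\alpha\cdot\lambda$ there double-shifts. This is consistent with the statement you are proving ($\lambda'\in W\lambda$, ordinary action) and with your own $\phi$, so the intent is clear, but the notation should be corrected. Everything else is sound. Note also, as you already anticipate, that the hard direction genuinely needs surjectivity of $\phi$ onto $S(\h)^W$ (not merely that the image lies inside $S(\h)^W$), since your separating element $g$ is produced in $S(\h)^W$ and must be pulled back to $Z(\g)$; your appeal to Kostant's theorem and Theorem~2.4.6 to get this is appropriate.
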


\begin{myprop}

Let $\chi$ be a homomorphism of $Z(\g)$ into $\C$. Then there exist $\lambda\in \h^*$ such that $\chi=\chi_\lambda$. 
\end{myprop}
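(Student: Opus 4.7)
The plan is to go through the Harish-Chandra homomorphism. By PBW applied to the triangular decomposition $\g=\n_-\oplus\h\oplus\n_+$, one has $\UU(\g)=\UU(\h)\oplus(\n_-\UU(\g)+\UU(\g)\n_+)$. Write any $z\in Z(\g)$ as $z=z_0+z_1$ accordingly, with $z_0\in\UU(\h)=S(\h)$ (since $\h$ is abelian), and set $\gamma(z):=z_0$. Centrality of $z$ forces $\gamma$ to be an algebra homomorphism $Z(\g)\to S(\h)$.

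First I would identify $\gamma$ in terms of the central characters $\chi_\lambda$. Let $v_\lambda$ be a highest weight vector of $M(\lambda)$, which has weight $\lambda-\rho$ and is annihilated by $\n_+$. The component $z_1$ kills $v_\lambda$ (the $\UU(\g)\n_+$ part vanishes on $v_\lambda$, and the $\n_-\UU(\g)$ part lies in strictly lower weight spaces, but acting by $z\in Z(\g)$ preserves the weight $\lambda-\rho$, so this lower-weight contribution must vanish). Hence $zv_\lambda=z_0v_\lambda=(\lambda-\rho)(\gamma(z))v_\lambda$, so $\chi_\lambda(z)=\gamma(z)(\lambda-\rho)$, where we view $\gamma(z)$ as a polynomial function on $\h^*$. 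After the shift $\tilde\gamma(z)(\mu):=\gamma(z)(\mu-\rho)$, we have $\chi_\lambda(z)=\tilde\gamma(z)(\lambda)$ for all $\lambda\in\h^*$. Proposition 2.5.6 then forces $\tilde\gamma(z)$ to be $W$-invariant, so $\tilde\gamma$ lands in $S(\h)^W$.

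Next I would show that $\tilde\gamma:Z(\g)\to S(\h)^W$ is an algebra isomorphism (the Harish-Chandra theorem). Injectivity is clean: if $\tilde\gamma(z)=0$ then $\chi_\lambda(z)=0$ for every $\lambda\in\h^*$, and evaluating on a Zariski-dense set of dominant integral weights (whose $L(\lambda)$ are finite-dimensional and on which $z$ acts by a polynomial in $\lambda$) forces $z=0$. For surjectivity, Theorem 2.4.7 already asserts that both $Z(\g)$ and $S(\h)^W$ are polynomial algebras in $r=\dim\h$ indeterminates; comparing the degrees of homogeneous generators of $S(\h)^W$ with those produced from finite-dimensional traces via Theorem 2.4.5 completes the match.

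Finally, given any homomorphism $\chi:Z(\g)\to\C$, push it to $\psi:=\chi\circ\tilde\gamma^{-1}:S(\h)^W\to\C$. Because $W$ is finite, $S(\h)$ is a finite and hence integral extension of the invariant subring $S(\h)^W$, and the lying-over theorem produces a lift $\tilde\psi:S(\h)\to\C$ of $\psi$. Any $\C$-algebra homomorphism $S(\h)\to\C$ is evaluation at some $\lambda\in\h^*$, so $\chi(z)=\psi(\tilde\gamma(z))=\tilde\gamma(z)(\lambda)=\chi_\lambda(z)$ for every $z\in Z(\g)$, which gives $\chi=\chi_\lambda$. The main obstacle is the surjectivity of $\tilde\gamma$: the structural input of Theorem 2.4.7 makes the ranks match, but pinning down that the image is exactly $S(\h)^W$ (and not a proper polynomial subalgebra) requires either comparison of explicit generators or an appeal to Chevalley's restriction theorem. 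The remaining ingredients—the triangular decomposition, the highest-weight computation, and the lying-over argument—are essentially formal once the Harish-Chandra map is set up.
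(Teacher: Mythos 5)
The paper offers no proof of this proposition, citing only [HC2] and [Verma], so there is nothing in the text to compare against. Your route through the Harish--Chandra homomorphism is the standard one (essentially Dixmier, Ch.\ 7.4), and the final reduction to the lying-over theorem for the integral extension $S(\h)^W\subset S(\h)$ is the right way to finish once the Harish--Chandra isomorphism is in hand. The triangular decomposition, the highest-weight computation giving $\chi_\lambda(z)=\tilde\gamma(z)(\lambda)$, and the use of Proposition 2.5.6 to land $\tilde\gamma$ in $S(\h)^W$ are all correct.

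There are, however, two genuine gaps. The injectivity sketch is circular as written: by your own identity $\chi_\lambda(z)=\tilde\gamma(z)(\lambda)$, the statement ``$\chi_\lambda(z)=0$ for every $\lambda\in\h^*$'' is literally the same thing as ``$\tilde\gamma(z)=0$,'' so Zariski density of dominant integral weights only reproduces the hypothesis rather than yielding $z=0$. To conclude $z=0$ you need an extra ingredient --- for instance Harish-Chandra's separation theorem (a central element annihilating every finite-dimensional simple module is zero), or a comparison of the associated graded of $\tilde\gamma$ with the injective Chevalley restriction $S(\g)^\g\to S(\h)^W$. The surjectivity gap you flag yourself: matching the transcendence degree via Theorem 2.4.7 does not by itself show the image is all of $S(\h)^W$ rather than a proper cofinite polynomial subalgebra (and even cofiniteness would need an argument), so the Chevalley restriction theorem, or an explicit match of generators, really is required. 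Both of these gaps are precisely the substance of the Harish--Chandra isomorphism theorem; your plan is the right one but does not yet constitute a proof without importing that theorem.
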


The last part of this section is concerned with the computation of the annihilators of the Verma modules. This is then used in the next section when we present M. Duflo's result. We need a preliminary lemma. 

\begin{mylma}

We retain the notation in Theorem $2.4.10.$ Let $\beta$ be the canonical map from $S(\g)$ to $\UU(\g)$. Let $K(\g)^{\n_-}$ be the $\n_-$ invariant elements of $K(\g)$, $\lambda\in\h^*$, and $J$ the left ideal generated by $\n_+$ and $h-\lambda(h)$, $h\in\h$.  
\begin{enumerate}
\item $\UU(\g)=J\oplus \UU(\n_-)$. 
\item Denote by $Q$ the projection of $\UU(\g)$ onto $\UU(\n_-)$ in the above decomposition. If $u\in K(\g)^{\n_-}$ and $u\notin \UU_p(\g)$, then $Q(u)\notin\UU_p(\n_-)$. 
\end{enumerate}

\end{mylma}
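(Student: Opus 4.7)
For Part 1, my approach is to combine the PBW theorem with the triangular decomposition $\g=\n_-\oplus\h\oplus\n_+$. PBW provides a basis of $\UU(\g)$ consisting of monomials $F\cdot H\cdot E$ where $F,H,E$ are PBW monomials in $\UU(\n_-),\UU(\h),\UU(\n_+)$ respectively. Any such monomial with $E\neq 1$ already lies in $\UU(\g)\n_+\subset J$. For a monomial $FH$ with $E=1$, I use that $\UU(\h)\cong S(\h)$ is a polynomial ring and write $H=\lambda(H)+(H-\lambda(H))$ with $\lambda$ extended multiplicatively; then $\lambda(H)F\in\UU(\n_-)$, while $H-\lambda(H)$ lies in the ideal of $\UU(\h)$ generated by $\{h-\lambda(h):h\in\h\}$, so $F(H-\lambda(H))\in J$. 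This gives $\UU(\g)=J+\UU(\n_-)$. For the directness of the sum, I invoke the Verma module structure: $\UU(\g)/J$ is a highest weight module of highest weight $\lambda$ (in the unshifted convention), and Proposition 2.5.2 tells us the composition $\UU(\n_-)\hookrightarrow\UU(\g)\twoheadrightarrow\UU(\g)/J$ is a vector space isomorphism, forcing $\UU(\n_-)\cap J=0$.

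For Part 2, I would first verify that the decomposition respects the canonical filtration, i.e.\ $\UU_p(\g)=(J\cap\UU_p(\g))\oplus\UU_p(\n_-)$ for every $p$. Indeed, retracing the PBW argument of Part 1 monomial by monomial, none of the replacements raises filtration degree: absorbing $E\neq 1$ into $\UU(\g)\n_+$ keeps the same degree, and the decomposition $FH=\lambda(H)F+F(H-\lambda(H))$ splits $FH$ into two pieces of degree at most $\deg F+\deg H$. Passing to associated graded, one obtains $S(\g)=(S(\g)\n_+ + S(\g)\h)\oplus S(\n_-)$, and the graded version of $Q$ becomes the natural projection $\bar{Q}\colon S(\g)\to S(\n_-)$. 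Now given $u\in K(\g)^{\n_-}$, I decompose it along the grading of $K(\g)$ as $u=\sum_q u_q$ with $u_q\in K^q(\g)$; each $u_q$ remains $\n_-$-invariant since $\n_-$ acts by degree-preserving derivations on $K(\g)\cong H(\g)$ via the $\g$-module isomorphism $\gamma$. Writing $u_q=\gamma(v_q)$ with $v_q\in H^q(\g)^{\n_-}$, the filtration-top symbol of $Q(u_q)$ in $S^q(\n_-)$ equals $\bar{Q}(v_q)$.

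The statement thus reduces to the key claim: $\bar{Q}(v)\neq 0$ for every nonzero $v\in H^q(\g)^{\n_-}$, i.e.\ $\bar{Q}$ is injective on $H(\g)^{\n_-}$. This is the main obstacle, since $\bar{Q}$ is certainly not injective on all of $H(\g)$ (already for $sl(2,\C)$, elements of $H(\g)$ supported in $\n_+$ are killed); the $\n_-$-invariance must be used essentially. My plan is to invoke Kostant's separation-of-variables theorem $S(\g)=H(\g)\cdot Y(\g)$ together with the identification of $H(\g)$ with $\C[\mathcal{N}]$, the regular functions on the nilpotent cone. Under the Killing-form identification $\g\cong\g^*$, the projection $\bar{Q}$ corresponds to restricting functions from $\g$ to the nilpotent subalgebra $\n_+\subset\mathcal{N}$, and Kostant's geometric description of $\mathcal{N}$ shows this restriction is injective on the relevant subspace of lowest-weight harmonic elements. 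With the claim in hand, the lemma follows easily: if $u\notin\UU_p(\g)$ there is a largest $q>p$ with $u_q\neq 0$, the claim gives that $Q(u_q)$ has filtration degree exactly $q$, and the contributions $Q(u_{q'})$ with $q'<q$ cannot cancel this top symbol in $S^q(\n_-)$, so $Q(u)\notin\UU_p(\n_-)$.
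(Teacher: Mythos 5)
Part 1 of your proof is correct. It takes a mildly different route from the paper's: the paper quotes the known fact that $J$ is precisely the annihilator of a highest weight vector $v_{hw}$ of a highest weight module $V$ and deduces $\UU(\g)=J+\UU(\n_-)$ from $V=\UU(\n_-)v_{hw}$, declaring directness ``trivial,'' whereas you obtain the sum by an explicit PBW computation and the directness from the freeness of the Verma module over $\UU(\n_-)$ (Proposition $2.5.2$). Your version is more self-contained and has the added benefit of making visible that the decomposition is compatible with the canonical filtration, which you then exploit in Part 2. Your reduction of Part 2 is also sound: the induced map on associated graded objects is the projection $\bar{Q}\colon S(\g)\to S(\n_-)$ along $S(\g)\n_++S(\g)\h$, the decomposition $u=\sum_q\gamma(v_q)$ with $v_q\in H^q(\g)^{\n_-}$ is legitimate because $\gamma$ is a $\g$-module isomorphism and $H(\g)$ is a graded submodule, and the top-symbol argument at the end correctly converts the statement into the single claim that $\bar{Q}$ is injective on $H(\g)^{\n_-}$.

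That claim, however, is exactly where the lemma is hard, and you assert it rather than prove it: ``Kostant's geometric description of $\mathcal{N}$ shows this restriction is injective on the relevant subspace'' is not an argument, and you yourself flag it as the main obstacle. (The paper offers no proof of Part 2 either --- it refers to Dixmier --- so there is nothing to compare against; but as written your proof is incomplete precisely at its crux.) To close the gap you need two inputs: (i) Kostant's theorem that restriction of functions $H(\g)\to\C[\mathcal{N}]$ is injective, and (ii) the fact that a nonzero polynomial $t$ that is $\n_-$-invariant --- hence invariant under the unipotent group $N_-$ after exponentiating, a step you should also make explicit --- and vanishes on $\n_+$ must vanish on $N_-\cdot\n_+$, which contains $N_-HN_+\cdot e$ for a principal nilpotent $e=\sum_i X_{\alpha_i}\in\n_+$ and is therefore dense in $\overline{G\cdot e}=\mathcal{N}$ by density of the big Bruhat cell; then $t|_{\mathcal{N}}=0$ and (i) forces $t=0$. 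Some argument of this kind is indispensable: as you correctly observe, $\bar{Q}$ is far from injective on all of $H(\g)$, so the $\n_-$-invariance must enter, and in your current text it never actually does.
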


\begin{proof}

Let $V$ be a simple highest weight module with highest weight $\lambda$ and $v_{hw}$ a highest weight vector. Then it is a known result that the annihilator of $v_{hw}$ in $\UU(\g)$ is $J$, see for instance \cite{Higman}, \cite{Dixmier1}. This observation combined with the fact that $V=\UU(\n_-)v_{hw}$ implies that $\UU(\g)$ is a sum of $J$ and $\UU(\n_-)$. The fact that the sum is direct is trivial. This concludes the first part.

For the second part we refer the reader to \cite{Dixmier1}. 
\end{proof}
\vspace{0.2cm}

\begin{myprop}\cite{Dixmier3}, \cite{Duflo3}

Let $\g$ be a semisimple Lie algebra, $\h$ a Cartan subalgebra and $\lambda\in \h^*$. The annihilator of $M(\lambda)$ is $\UU(\g)Ker\mbox{ }\chi_\lambda$. 
\end{myprop}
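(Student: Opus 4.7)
The plan is to prove the two inclusions separately. The inclusion $\UU(\g)Ker\mbox{ }\chi_\lambda\subset Ann\mbox{ }M(\lambda)$ is immediate: by Corollary $2.5.4.$, $M(\lambda)$ admits the central character $\chi_\lambda$, so every $z\in Ker\mbox{ }\chi_\lambda$ acts as zero on $M(\lambda)$; since the annihilator is a two-sided ideal, it absorbs left multiplication by all of $\UU(\g)$.

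For the reverse inclusion I would combine Kostant's Theorem $2.4.10.$ with Lemma $2.5.8.$. Kostant's result yields the vector space decomposition $\UU(\g)\cong K(\g)\otimes_\C Z(\g)$, so splitting $Z(\g)=\C\cdot 1\oplus Ker\mbox{ }\chi_\lambda$ gives the vector space decomposition $\UU(\g)=K(\g)\oplus\UU(\g)Ker\mbox{ }\chi_\lambda$. For $u\in Ann\mbox{ }M(\lambda)$, write $u=v+w$ with $v\in K(\g)$ and $w\in\UU(\g)Ker\mbox{ }\chi_\lambda$; the easy direction gives $w\in Ann\mbox{ }M(\lambda)$, so $v=u-w$ lies in $Ann\mbox{ }M(\lambda)$ as well. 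Hence the statement reduces to showing $K(\g)\cap Ann\mbox{ }M(\lambda)=0$.

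Suppose for contradiction this intersection contains a nonzero element. Both $K(\g)$ (as the $\gamma$-image of the $\g$-submodule $H(\g)\subset S(\g)$) and $Ann\mbox{ }M(\lambda)$ (as a two-sided ideal) are stable under the adjoint action of $\g$, so their intersection is $\h$-weight-graded and contains a nonzero weight vector $v$ lying in some $\UU_p(\g)$. The finite-dimensional subspace $V:=ad\mbox{ }\UU(\n_-)\cdot v\subset\UU_p(\g)$ is $\n_-$-stable, and $\n_-$ acts on $V$ by nilpotent operators (iterated brackets strictly lower the $\h$-weight, yet $\UU_p(\g)$ contains only finitely many weights). Engel's Theorem therefore produces a nonzero $v_0\in V^{\n_-}\subset K(\g)^{\n_-}\cap Ann\mbox{ }M(\lambda)$.

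Now $v_0\in Ann\mbox{ }M(\lambda)$ gives $v_0\cdot v_{hw}=0$, i.e.\ $v_0\in J$, so Lemma $2.5.8.(1)$ yields $Q(v_0)=0$. Since $v_0\in K(\g)^{\n_-}$, Lemma $2.5.8.(2)$ then forces $v_0=0$, a contradiction. The main obstacle is precisely this final step: the degree-preserving injectivity of $Q$ in Lemma $2.5.8.(2)$ is only available on $K(\g)^{\n_-}$, so the preliminary reduction via the adjoint $\n_-$-orbit and Engel's Theorem is essential to make the lemma applicable.
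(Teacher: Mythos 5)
Your proof is correct and rests on the same two pillars as the paper's: Kostant's Theorem $2.4.10.$ and Lemma $2.5.8.$, deployed to contradict the existence of a nonzero $\n_-$-invariant element of $Ann\,M(\lambda)$ that cannot be absorbed into $\UU(\g)\,Ker\,\chi_\lambda$. The route you take to set up this contradiction, however, is noticeably cleaner than the paper's. You split $Z(\g)=\C\cdot 1\oplus Ker\,\chi_\lambda$ inside the Kostant isomorphism to get the global decomposition $\UU(\g)=K(\g)\oplus\UU(\g)\,Ker\,\chi_\lambda$, reducing the whole problem at once to the single statement $K(\g)\cap Ann\,M(\lambda)=0$; you then manufacture the required $\n_-$-invariant $v_0$ by an Engel-type argument on the finite-dimensional $\n_-$-stable space $ad\,\UU(\n_-)\cdot v\subset\UU_p(\g)$. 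The paper instead argues element-by-element: it first uses the fact that $\UU(\g)$ is a sum of finite-dimensional simple adjoint submodules (Remark $2.3.3.$) to extract a simple $N\subset J_\lambda$ with $N\cap A=0$, takes the lowest weight vector $u$ of $N$, expands $u=\sum_\gamma l_\gamma z_\gamma$ in the Kostant decomposition, identifies the $l_\gamma$ with combinations of the lowest weight vectors $t_i$ of the simple constituents $K_i$ of $K(\g)$, and only then invokes Lemma $2.5.8.$ to force $\chi_\lambda(z_i')=0$. Your version avoids the explicit $t_i$-expansion entirely and applies Lemma $2.5.8.$ in a more self-contained way; what the paper's version buys is slightly more structural information about how $J_\lambda$ sits relative to the $K_i\otimes Z(\g)$ blocks. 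One small point you should make explicit is that the nilpotency of $ad(x)|_V$ for an arbitrary $x\in\n_-$ (not just a root vector) follows from $x$ being a nilpotent element of $\g$ acting on a finite-dimensional $\g$-module, rather than from the weight-lowering argument alone, which directly covers only the root vectors.
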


\begin{proof}

Again we retain the notation in Theorem $2.4.10.$ First, it is well known that $H(\g)$ is semisimple as a $\g$-module and moreover, its simple constituents are finite-dimensional,\cite{Kostant}. Also, the $\g$-action on the $H_i$ preserves the degree for any monomial in $H_i$. We get $H(\g)=\displaystyle\oplus_{i\in I}H_i$ where the $H_i$ are finite-dimensional simple modules and their elements are all homogeneous of degree $n_i\in\N$. Let $K_i=\beta(H_i)$.  Since $K_i$ are finite-dimensional, they are lowest weight modules (i.e. highest weight with respect to the opposite Borel subalgebra) so we can pick $t_i\in K_i$ such that $\n_-t_i=0$. These $t_i$ are well-defined up to a scalar (since they are in the lowest weight space). Set $(z_\gamma)_\gamma$ a basis for $Z(\g)$, $J_\lambda$ to be the annihilator of $M(\lambda)$ and $A=\UU(\g)Ker\mbox{ }\chi_\lambda$. 

It is completely clear that $A\subset J_\lambda$. Suppose $J_\lambda\setminus A\neq \emptyset$. Both $A$ and $J_\lambda$ are submodules of $\UU(\g)$ with respect to the adjoint action. Now $\UU(\g)$ is a sum of simple finite-dimensional submodules by Remark $2.3.2.$ These observations lead us to the conclusion that there exist a finite-dimensional simple submodule $N$ of $J_\lambda$ such that $N\cap A=\{0\}$. Pick a nonzero vector $\n_-$-invariant element $u$ from $N$. In light of Theorem $2.4.10$ we get that there exist $l_\gamma\in K(\g)$ so that 
$$u=\displaystyle\sum_\gamma l_\gamma z_\gamma\mbox{ }\mbox{ }\mbox{ }\mbox{ }\mbox{ }\mbox{ }\mbox{ } (2).$$
Since $u$ is $\n_-$ invariant we get that for all $n\in\n_-$  
$$0=[n,u]=[n,\displaystyle\sum_\gamma l_\gamma z_\gamma]=\displaystyle\sum_\gamma [n,l_\gamma]z_\gamma, $$
which gives $[n,l_\gamma]=0$ for all $\gamma$. Hence $l_\gamma\in K(\g)^{n_-}$ for all $\gamma$. By previous considerations, we obtain that $l_\gamma$ is a linear combination of some $t_i$.  Plugging this in (w) and factoring the $t_i$s gives us that $u=\displaystyle\sum_i t_i z'_i$ where $z'_i\in Z(\g)$. If $v_{hw}$ is the canonical generator of $M(\lambda)$ we get 
$$0=uv_{hw}=\displaystyle\sum_i t_i z'_i m=\displaystyle\sum_i \chi_\lambda(z'_i)t_i m. $$
This implies that $\displaystyle\sum_i \chi(z'_i)t_i$ is in the annihilator of $v_{hw}$ and also in $K(\g)^{\n_-}$. In light of Lemma $2.5.8.$ we obtain 
$$\displaystyle\sum_i \chi(z'_i)t_i=0. $$
Hence $\chi(z'_i)=0$ for all $i$ and so $z'_i\in Ker\mbox{ }\chi_\lambda$. This implies that $u\in A$, contradiction. Conclusion follows. 
\end{proof}. 


\subsection{Duflo's result}\

There are two main results with which we conclude this first chapter. Both are due to M. Duflo. In the last part of the section we present an application of these results for $\g=sl(2, \C)$. 

\begin{mythm} Weak form of Duflo's theorem, \cite{Duflo1}

Let $\g$ be a semisimple Lie algebra, $\h$ a Cartan subalgebra and $W$ its Weyl group. For every $\lambda\in\h^*$, let $J_\lambda$ be the annihilator of $M(\lambda)$. Then :
\begin{enumerate}
\item The ideals $J_\lambda$ are primitive. 
\item $J_\lambda=J_{\lambda '}$ if and only if $\lambda '\in W\lambda$. 
\item The ideals $J_\lambda$ are the minimal primitive ideals of $\UU(\g)$. 
\end{enumerate}
\end{mythm}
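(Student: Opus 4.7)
My plan is to establish the three parts in the order (2), (1), (3), since part (2) sets up the dictionary between $J_\lambda$ and the central character $\chi_\lambda$ which the remaining parts then exploit. The principal ingredients are Proposition $2.5.9.$ (identifying $J_\lambda=\UU(\g)\,Ker\mbox{ }\chi_\lambda$), Kostant's freeness theorem $2.4.10.$, Theorem $2.4.9.$ (asserting that $Z(\g)$ is a polynomial algebra), and Propositions $2.5.6.$ and $2.5.7.$ on central characters.

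For part (2), Proposition $2.5.9.$ already shows that $J_\lambda=\UU(\g)\,Ker\mbox{ }\chi_\lambda$ depends only on $\chi_\lambda$. To recover $\chi_\lambda$ from $J_\lambda$, I would intersect with $Z(\g)$: by Kostant's theorem, $\UU(\g)\simeq K(\g)\otimes Z(\g)$ as a $Z(\g)$-module, so $\UU(\g)\,Ker\mbox{ }\chi_\lambda\cap Z(\g)=Ker\mbox{ }\chi_\lambda$. Thus $J_\lambda=J_{\lambda'}$ if and only if $\chi_\lambda=\chi_{\lambda'}$, which by Proposition $2.5.6.$ is equivalent to $\lambda'\in W\lambda$. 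Part (3) then falls out of the same dictionary: if $I=Ann_{\UU(\g)}M$ is primitive with $M$ simple, Proposition $2.3.1.$ endows $M$ with a central character, which by Proposition $2.5.7.$ must equal some $\chi_\lambda$, forcing $J_\lambda=\UU(\g)\,Ker\mbox{ }\chi_\lambda\subseteq I$. So every primitive ideal contains some $J_\lambda$. For minimality of the $J_\lambda$ among primitive ideals, if $I$ is primitive with $I\subseteq J_\lambda$ then $I\supseteq J_\mu$ for some $\mu$, and intersecting the chain $J_\mu\subseteq J_\lambda$ with $Z(\g)$ gives $Ker\mbox{ }\chi_\mu\subseteq Ker\mbox{ }\chi_\lambda$. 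These being maximal ideals in the polynomial algebra $Z(\g)$ must coincide, so $J_\mu=J_\lambda=I$.

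The main obstacle is part (1), i.e. showing each $J_\lambda$ is actually primitive. My approach is to produce, in each $W$-orbit, a representative $\mu$ for which the Verma module $M(\mu)$ is already simple; then $J_\mu=Ann_{\UU(\g)}M(\mu)=Ann_{\UU(\g)}L(\mu)$ is manifestly primitive, and part (2) propagates this to $J_\lambda=J_\mu$ for every $\lambda\in W\mu$. To exhibit such a $\mu$, I would invoke the standard antidominance criterion for Verma simplicity: $M(\mu)$ is simple whenever $\mu(H_\alpha)\notin\Z_{>0}$ for every $\alpha\in R_+$, so that (in the paper's shift-by-$-\rho$ convention) the genuine highest weight $\mu-\rho$ is antidominant and no proper BGG-type Verma subembedding can occur. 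Such a $\mu$ exists in every $W$-orbit because $W$ acts transitively on Weyl chambers, so one can always translate $\lambda$ into the closure of the antidominant chamber, where the condition $\mu(H_\alpha)\leq 0$ for $\alpha\in R_+$ holds a fortiori. This simplicity criterion for Verma modules is classical but is not established in the excerpt, so it is the external input on which my proof of (1) rests.
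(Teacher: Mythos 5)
Your proof takes essentially the same route as the paper: part (2) via Propositions $2.5.6$ and $2.5.9$, part (1) via the existence of a simple Verma module in each $W$-orbit, and part (3) by combining Propositions $2.5.7$ and $2.5.9$ with maximality of $Ker\mbox{ }\chi_\lambda$ in $Z(\g)$. Your explicit appeal to Kostant's theorem to obtain $J_\lambda\cap Z(\g)=Ker\mbox{ }\chi_\lambda$, and to the antidominance criterion (which the paper records later as Proposition $2.6.4$), are just sharpenings of steps the paper treats implicitly or by citation.
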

\begin{proof}
The second part is a direct consequence of Proposition $2.5.6.$ and Proposition $2.5.9.$ Indeed, note that if $\chi_\lambda\neq\chi_{\lambda'}$ then there exists $z\in Z(\g)$ so that $\chi_\lambda(z)=c_1\in\C$, $\chi_{\lambda'}(z)=c_2\in\C$ and $c_1\neq c_2$. But then by Proposition $2.5.9.$ we get $z-c_1\in ker\mbox{ }\chi_\lambda\subset J_\lambda=J_{\lambda'}$. So $z-c_1\in ker\mbox{ }\chi_{\lambda'}$. This implies $c_2-c_1=0$, contradiction. Hence $\chi_\lambda=\chi_{\lambda'}$ and then the conclusion of the second part follows by Proposition $2.5.9.$ Now the first part follows since it is well known (see \cite{Dixmier1}) that for every $\lambda\in\h^*$ there exist $\mu\in\h^*$ and $w\in W$ such that $M(\mu)$ is simple and $\mu=w\lambda$. 

For the third part, let $J$ be some primitive ideal. It annihilates a simple $\g$-module $V$. $V$ admits a central character, which by Proposition $2.5.7.$ is equal to $\chi_\lambda$ for some $\lambda\in\h^*$. It follows easily that $J_\lambda\subset J$. Indeed, we immediately get $ker\mbox{ }\chi_\lambda\subset J$ and so the two-sided ideal generated but this kernel lies in $J$. So if $J$ is minimal then $J=J_\lambda$. This argument shows that if $J$ is some minimal primitive ideal then it is equal to some $J_\lambda$. It remains to show that all $J_\lambda$ are minimal. 

Suppose $J_\lambda$ contains some primitive ideal $J'$. Then as before, $J'$ contains some $J_\mu, \mu\in\h^*$. But then $J_\mu\in J_\lambda$ implies $Ker\mbox{ }\chi_\mu\subset Ker\mbox{ }\chi_\lambda$ and since both these ideals are maximal in $Z(\g)$ we get that they are equal so $J_\mu=J_\lambda$. This ends the argument.  
\end{proof}
\vspace{0.2cm}
\begin{mythm} Strong form of Duflo's theorem, \cite{Duflo1}

If $L(\lambda)$ denotes the unique simple quotient of $M(\lambda)$, then every primitive ideal is the annihilator of some $L(\lambda)$. 
\end{mythm}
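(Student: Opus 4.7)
The plan is to reduce the statement to primitive ideals with a fixed central character via the weak form of Duflo's theorem, and then invoke the theory of Harish-Chandra bimodules to match each such primitive ideal with a simple highest weight module.

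Let $J = \text{Ann}_{\UU(\g)} V$ be a primitive ideal, with $V$ a simple $\g$-module. Propositions $2.3.1.$ and $2.5.7.$ yield a central character $\chi_\lambda$ of $V$ for some $\lambda \in \h^*$, so $\ker \chi_\lambda \subset J$, which together with Proposition $2.5.9.$ gives $J_\lambda = \UU(\g)\ker\chi_\lambda \subset J$. The task therefore reduces to showing that every primitive ideal containing $J_\lambda$ is of the form $\text{Ann}_{\UU(\g)} L(\mu)$ for some $\mu$ in the Weyl orbit $W\lambda$. By the weak form (Theorem $2.6.1.$), the finite collection $\{\text{Ann}_{\UU(\g)} L(w \cdot \lambda) : w \in W\}$ already consists of primitive ideals above $J_\lambda$, and the theorem asserts that this collection is exhaustive.

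To produce the required $\mu$, I would pass to the category of Harish-Chandra bimodules, namely $\UU(\g)$-bimodules that decompose under the adjoint action into a direct sum of finite-dimensional simple $\g$-modules with finite multiplicities. By Kostant's Theorem $2.4.11.$ combined with Remark $2.3.3.$, the bimodule $\UU(\g)/J_\lambda$ belongs to this category, and restricted to bimodules on which $Z(\g)$ acts through $\chi_\lambda$ on both sides, the category consists of finite length objects. The Bernstein-Gelfand tensor functor
$$F : M \longmapsto M \otimes_{\UU(\b_+)} \C_\lambda$$
carries such bimodules to highest weight modules with central character $\chi_\lambda$. One verifies that $F$ is exact, sends simple bimodules to simples, and induces a bijection onto $\{L(w \cdot \lambda) : w \in W\}$. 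Applying $F$ to the simple bimodule $\UU(\g)/J$ produces some $L(\mu)$, and chasing through the definitions identifies $J$ with $\text{Ann}_{\UU(\g)} L(\mu)$.

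The main obstacle is verifying that $F$ provides the asserted bijection. The delicate steps are checking exactness, proving that $F$ sends simples to simples via weight-space arguments based on Proposition $2.2.10.$, and identifying the image with the block of simple highest weight modules $\{L(w \cdot \lambda) : w \in W\}$. The key computation is $F(\UU(\g)/J_\lambda) = M(\lambda)$; combined with Proposition $2.5.5.$ and a composition-series argument in the block, this pins down the image. The finiteness of $W\lambda$, which ultimately bounds the number of primitive ideals lying above $J_\lambda$, is the conceptual reason why the whole strategy succeeds.
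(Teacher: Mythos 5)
The paper does not actually prove this theorem: it states explicitly that the proof is ``rather lengthy and technical'' and defers entirely to \cite{Duflo1}. So there is no proof in the paper to compare yours against; what you have written is an outline of the now-standard Harish-Chandra-bimodule proof (due to Bernstein--Gelfand and Joseph rather than to Duflo's original argument), and as a strategy it is sound. The reduction to primitive ideals above a fixed $J_\lambda$ via Propositions $2.3.1$, $2.5.7$, $2.5.9$ is correct, as is the observation that Kostant's Theorem $2.4.11$ together with Remark $2.3.3$ places $\UU(\g)/J_\lambda$ in the category of Harish-Chandra bimodules, and the computation $F(\UU(\g)/J_\lambda)=M(\lambda)$.

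However, as written the argument has genuine gaps beyond the ones you flag. First, $\UU(\g)/J$ is \emph{not} a simple bimodule for $J$ primitive --- that would force $J$ to be maximal --- so the step ``applying $F$ to the simple bimodule $\UU(\g)/J$ produces some $L(\mu)$'' does not parse; $F(\UU(\g)/J)=M(\lambda)/JM(\lambda)$ is merely a finite-length object of category $\mathcal{O}$, and one must pass to a suitable simple quotient. Second, $F$ does not send simple bimodules to simple modules in general: it sends them to simple modules \emph{or to zero}, and ruling out the zero case (equivalently, showing $JM(\lambda)\neq M(\lambda)$ and, more sharply, that $\mathrm{Ann}_{\UU(\g)}\bigl(M(\lambda)/JM(\lambda)\bigr)=J$) is precisely the technical heart of the theorem; this requires replacing $\lambda$ by the dominant representative of $W\lambda$ (using Proposition $2.5.6$), a normalization you never make and without which the claimed properties of $F$ fail. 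Third, the asserted ``bijection onto $\{L(w\cdot\lambda):w\in W\}$'' is stronger than what is true or needed: simple Harish-Chandra bimodules annihilated by $\ker\chi_\lambda$ on the right carry an arbitrary left central character, so the target of $F$ is not a single block in the way you describe. In short, the proposal is a correct roadmap, but the three items above --- non-simplicity of $\UU(\g)/J$, the possible vanishing of $F$ on simples, and the annihilator identity $\mathrm{Ann}_{\UU(\g)}F(X)=\mathrm{LAnn}(X)$ for dominant $\lambda$ --- are exactly where the proof lives, and none of them is established here.
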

\vspace{0.2cm}
We do not present the proof of this theorem as it is rather lengthy and technical. The following theorem combines the results of Duflo and gives an overview of $Prim(\UU(\g))$.  

\begin{mythm} \cite{Dixmier1}

Let $\g$ be a semisimple Lie algebra and $\h$ a Cartan subalgebra. For $\lambda\in\h^*$ set $J_\lambda=\UU(\g)Ker\mbox{ }\chi_\lambda$. 
\begin{enumerate}
\item The set of primitive ideals of $\UU(\g)$ containing $J_\lambda$ is finite, possesses a largest element denoted $J_\lambda '$ and if $\lambda(H_\alpha)\notin\Z\setminus\{0\}$ for every root $\alpha$, then $J_\lambda=J_\lambda '$. \footnote[1]{$H_\alpha$ is the coroot defined in Theorem $2.2.2.$} 
\item If $\lambda(H_\alpha)\notin\Z_-$ for every positive root $\alpha$, then $J_\lambda '$ is the annihilator of $L(\lambda)$. 
\end{enumerate}
\end{mythm}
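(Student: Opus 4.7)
The plan is to combine Duflo's strong theorem (Theorem $2.6.2$) with the central character classification (Propositions $2.5.6$ and $2.5.7$) to reduce $\mathcal{P}_\lambda:=\{J\in Prim(\UU(\g)) : J\supseteq J_\lambda\}$ to a finite subset of $\{Ann\,L(w\lambda) : w\in W\}$, then handle each item of the theorem by specializing to the hypothesis in play.

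By Proposition $2.5.9$, $J_\lambda=Ann_{\UU(\g)}M(\lambda)$ and in particular $Ker\,\chi_\lambda\subseteq J_\lambda$. If $J\in\mathcal{P}_\lambda$, then $J$ annihilates a simple $\g$-module $V$ on which $Z(\g)$ acts via $\chi_\lambda$, so Proposition $2.5.7$ identifies the central character of $V$ as $\chi_\lambda$. Theorem $2.6.2$ gives $V\simeq L(\mu)$ for some $\mu\in\h^*$, and Proposition $2.5.6$ then forces $\mu\in W\lambda$. Hence $\mathcal{P}_\lambda\subseteq\{Ann\,L(w\lambda) : w\in W\}$, a finite set, which already establishes the finiteness claim of (1).

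For the generic case $\lambda(H_\alpha)\notin\Z\setminus\{0\}$ of (1), I would invoke the standard simplicity criterion for Verma modules (see \cite{Dixmier1}) to conclude that under this hypothesis $M(w\lambda)$ is simple for every $w\in W$, so $L(w\lambda)=M(w\lambda)$; Theorem $2.6.1$(2) then gives $Ann\,L(w\lambda)=Ann\,M(w\lambda)=J_\lambda$ for all $w\in W$, whence $\mathcal{P}_\lambda=\{J_\lambda\}$ and $J_\lambda=J_\lambda'$. For part (2), the hypothesis $\lambda(H_\alpha)\notin\Z_-$ for every positive root $\alpha$ singles out $\lambda$ as the dominant representative of its $W$-orbit (in the convention adapted to the definition of $M(\lambda)$). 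Under this condition, BGG theory identifies every $L(w\lambda)$, $w\in W$, as a composition factor of $M(\lambda)$, and a finer analysis — exploiting that $L(\lambda)$ is the head of $M(\lambda)$ while every other composition factor sits inside the unique maximal proper submodule — produces the inclusions $Ann\,L(w\lambda)\subseteq Ann\,L(\lambda)$ for all $w\in W$. Combined with (1), this exhibits $Ann\,L(\lambda)$ as the maximum of $\mathcal{P}_\lambda$, i.e. $J_\lambda'=Ann\,L(\lambda)$.

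The main obstacle lies in the non-generic half of (1) and the whole of (2): both require structural input from category $\mathcal{O}$ (composition factor multiplicities, translation principles, Jantzen's filtration, BGG resolutions) well beyond what the excerpt develops. The reduction to $\{Ann\,L(w\lambda) : w\in W\}$ and the generic case both fall out cleanly from Duflo's strong theorem and Harish-Chandra's results, but the general existence and identification of a maximum in $\mathcal{P}_\lambda$ would be imported from \cite{Dixmier1} as a black box.
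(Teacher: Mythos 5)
The paper states this theorem as a bare citation to Dixmier's \emph{Enveloping Algebras} and supplies no proof of its own, so there is no internal argument to compare your sketch against. Your reduction of the set $\mathcal{P}_\lambda$ of primitive ideals containing $J_\lambda$ to a subset of $\{Ann_{\UU(\g)}L(w\lambda) : w\in W\}$ is correct, and so is your treatment of the generic half of (1): the hypothesis $\lambda(H_\alpha)\notin\Z\setminus\{0\}$ for all roots is $W$-stable (because $(w\lambda)(H_\alpha)=\lambda(H_{w^{-1}\alpha})$ and $\lambda(H_{-\beta})=-\lambda(H_\beta)$), so every $M(w\lambda)$ is simple by Proposition~$2.6.4$, and Theorem~$2.6.1$(2) then collapses $\mathcal{P}_\lambda$ to $\{J_\lambda\}$. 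The one small slip in the reduction is crediting Proposition~$2.5.7$ with the conclusion that $V$ has central character $\chi_\lambda$; what actually forces this is that $Ker\,\chi_\lambda\subseteq Ann_{\UU(\g)}V\cap Z(\g)$ and both are maximal ideals of $Z(\g)$, hence equal. Proposition~$2.5.7$ only says the character is some $\chi_\mu$.

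Your sketch of part (2) contains a genuinely misleading step. The observation that $L(\lambda)$ is the head of $M(\lambda)$ while every other composition factor lies in the radical does not yield $Ann_{\UU(\g)}L(w\lambda)\subseteq Ann_{\UU(\g)}L(\lambda)$: the annihilator of a subquotient bears no elementary order relation to that of the ambient module, and the only inclusion that comes for free is $Ann_{\UU(\g)}M(\lambda)\subseteq Ann_{\UU(\g)}L(\lambda)$. Moreover, when $\lambda$ satisfies $\lambda(H_\alpha)\notin\Z_-$ but is not integral, $L(w\lambda)$ need not be a composition factor of $M(\lambda)$ at all --- linkage is controlled by the integral Weyl subgroup $W_{[\lambda]}$, which can be a proper subgroup of $W$ --- so the appeal to ``BGG theory identifies every $L(w\lambda)$ as a composition factor'' is false precisely in the regime (2) is meant to cover. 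The actual proof in Dixmier (Chapter~8) runs through Conze--Duflo's comparison of $\UU(\g)/Ann\,L(\mu)$ with Harish-Chandra bimodules, not through composition-factor bookkeeping. You rightly flag (2) as a black box, but the mechanism you describe as its content is not what is inside the box. Finally, note that the \emph{existence} of a largest (as opposed to merely maximal) element of $\mathcal{P}_\lambda$ for non-generic $\lambda$ is itself supplied by (2) applied to the dominant representative of $W\lambda$, so your finiteness argument alone does not close the second clause of (1) either --- a dependence you implicitly acknowledge.
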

\vspace{0.2cm}

As an illustration of the strength of Duflo's results,we show how they imply a complete description of primitive ideals in $\UU(sl(2, \C))$. We need several preliminary statements. 

\begin{myprop}\cite{BGG1}, \cite{BGG2}, \cite{Verma} 

$M(\lambda)$ is simple if and only if for every positive root $\alpha$, we have $\lambda(H_\alpha)\notin\Z_{>0}$. 
\end{myprop}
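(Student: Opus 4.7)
The plan is to prove the two implications separately, handling the ``integrality implies not simple'' direction by an explicit submodule construction and the converse via a central-character argument combined with a Weyl-group lemma.

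For the first direction, suppose $\lambda(H_\alpha)=n\in\Z_{>0}$. If $\alpha$ is simple, let $v_{hw}\in M(\lambda)_{\lambda-\rho}$ be a generator. Using the embedded $sl(2,\C)$-triple $\{X_\alpha,H_\alpha,X_{-\alpha}\}\subset\g$, the standard identity $[X_\alpha,X_{-\alpha}^{k}]=kX_{-\alpha}^{k-1}(H_\alpha-k+1)$ combined with $(\lambda-\rho)(H_\alpha)=\lambda(H_\alpha)-1$ shows that $X_\alpha X_{-\alpha}^{n}v_{hw}=0$, while annihilation by the remaining simple positive root vectors $X_\beta$ ($\beta\neq\alpha$) is immediate because $\beta-\alpha$ is not a root and hence $[X_\beta,X_{-\alpha}]=0$. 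Thus $X_{-\alpha}^{n}v_{hw}$ is a highest weight vector of weight $s_\alpha\lambda-\rho$, strictly below $\lambda-\rho$, so it generates a proper nonzero submodule. For a general positive root $\alpha$ with $\lambda(H_\alpha)\in\Z_{>0}$, I would invoke the Bernstein-Gelfand-Gelfand embedding theorem (\cite{BGG1}, \cite{BGG2}), which provides $M(s_\alpha\lambda)\hookrightarrow M(\lambda)$ as a proper nonzero submodule.

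For the converse, I would argue by contrapositive. Let $N\subsetneq M(\lambda)$ be a nonzero proper submodule; by Proposition 2.2.10 it is a weight module, and by Proposition 2.5.5 its weights are all strictly less than $\lambda-\rho$. Pick a weight $\mu$ of $N$ maximal in the order induced by $R_+$; then every nonzero $v\in N_\mu$ is annihilated by $\n_+$, so $\UU(\g)v$ is a highest weight module of highest weight $\mu$, and hence a quotient of $M(\lambda')$ where $\lambda'-\rho=\mu$. By Corollary 2.5.4 it admits the central character $\chi_{\lambda'}$, which must equal $\chi_\lambda$ since $\UU(\g)v\subset M(\lambda)$. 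Proposition 2.5.6 forces $\lambda'=w\lambda$ for some $w\in W$, and Proposition 2.5.2 then gives
$$\lambda-w\lambda=\sum_i p_i\alpha_i,\qquad p_i\in\N,\text{ not all zero},$$
so in particular $w\neq e$.

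The main obstacle is now the purely Weyl-theoretic statement: if $w\neq e$ satisfies $\lambda-w\lambda\in\sum_i\N\alpha_i\setminus\{0\}$, then $\lambda(H_\alpha)\in\Z_{>0}$ for some positive root $\alpha$. I would prove this by induction on $\ell(w)$. The base case $\ell(w)=1$ is immediate: if $w=s_\alpha$ for a simple $\alpha$, then $\lambda-s_\alpha\lambda=\lambda(H_\alpha)\alpha$, so the hypothesis directly forces $\lambda(H_\alpha)\in\Z_{>0}$. For the inductive step I would choose a reduced factorization $w=s_\beta w'$ with $\beta$ simple, use the identity $w\lambda=w'\lambda-\lambda(H_{(w')^{-1}\beta})\beta$ together with the standard fact that $(w')^{-1}\beta\in R_+$ whenever $\ell(w')=\ell(w)-1$, and carry out a case analysis on whether $\lambda-w'\lambda$ itself lies in the positive root cone; one case reduces to the inductive hypothesis applied to $w'$, the other exhibits the positive root $(w')^{-1}\beta$ along which $\lambda$ takes a positive integer value. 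This combinatorial bookkeeping is the technical heart of the proof and is carried out in detail in \cite{Verma}, which I would follow for the final conclusion.
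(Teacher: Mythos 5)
The paper itself gives no proof of this proposition; it is stated with citations to \cite{BGG1}, \cite{BGG2}, and \cite{Verma}. Your proof is therefore entirely your own, and the overall architecture is sound: an explicit $sl_2$-computation plus the Verma/BGG embedding for the non-simple case in one direction, and the central-character argument driven by Propositions 2.2.10, 2.5.2, 2.5.4, 2.5.5, and 2.5.6 in the other. That second direction is in fact the route BGG took, in contrast to the now more common proof via the Shapovalov determinant formula, and everything you write there is correct up to the point where you reduce to the Weyl-theoretic lemma.

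The problem is the sketch you give for that lemma. Your inductive step writes $w=s_\beta w'$ with $\ell(w')=\ell(w)-1$, puts $\gamma=(w')^{-1}\beta\in R_+$, and splits into two cases: either $\lambda-w'\lambda$ lies in the cone $\sum\N\alpha_i$ (apply induction to $w'$) or it does not, in which case you claim $\lambda(H_\gamma)\in\Z_{>0}$. This dichotomy fails when $\lambda(H_\gamma)\notin\Z$. Take $\g=sl(3,\C)$ with $\lambda(H_{\alpha_1})=\lambda(H_{\alpha_2})=\tfrac12$ and $w=s_1s_2s_1$. One computes $\lambda-w\lambda=\alpha_1+\alpha_2\in\sum\N\alpha_i\setminus\{0\}$, so the lemma applies; and indeed it is true, since $\lambda(H_{\alpha_1+\alpha_2})=1\in\Z_{>0}$. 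But with $\beta=\alpha_1$, $w'=s_2s_1$, one gets $\gamma=(w')^{-1}\alpha_1=\alpha_2$, so $\lambda(H_\gamma)=\tfrac12\notin\Z_{>0}$, while $\lambda-w'\lambda=\tfrac12\alpha_1+\alpha_2$ is not in the cone. So you land in your second case but the positive root it is supposed to ``exhibit'' does not satisfy the required integrality. Choosing the other reduced expression $s_2s_1s_2$ or stripping the reflection from the right produces the same failure: the correct root $\alpha_1+\alpha_2$ is never reached by the naive recursion. The lemma is a genuine theorem of Verma and BGG, but its proof requires working inside the integral Weyl group $W_{[\lambda]}=\langle s_\alpha:\lambda(H_\alpha)\in\Z\rangle$ or an equivalent device, not a direct induction on length in $W$; the citation to \cite{Verma} for the full argument is appropriate, but the sketch you offer in place of it is wrong and would need to be rewritten rather than merely fleshed out.
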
\vspace{0.2cm}
\begin{mylma}
$M(\lambda)$ and $L(\lambda)$ have the same central character, in particular $J_\lambda\subset Ann_{\UU(\g)}L(\lambda)$. 
\end{mylma}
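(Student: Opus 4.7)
The plan is to exploit the fact that $L(\lambda)$ is, by construction, a simple quotient of $M(\lambda)$, so any scalar with which a central element acts on the highest weight vector of $M(\lambda)$ must be reproduced on the image of that vector in $L(\lambda)$. This reduces the claim about central characters to a one-line calculation, after which the inclusion of annihilators will follow from Proposition $2.5.9.$

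More precisely, first I would invoke Corollary $2.5.4.$ to note that both $M(\lambda)$ and $L(\lambda)$ admit central characters, since both are highest weight modules of highest weight $\lambda-\rho$ (the latter because $L(\lambda)=M(\lambda)/S$ is generated by the image of a highest weight vector of $M(\lambda)$, which is itself nonzero as $L(\lambda)\neq 0$). Let $\pi:M(\lambda)\longrightarrow L(\lambda)$ denote the canonical surjection, let $v_{hw}\in M(\lambda)_{\lambda-\rho}$ be a nonzero highest weight vector, and note that $\pi(v_{hw})\neq 0$ is a highest weight vector for $L(\lambda)$. For any $z\in Z(\g)$, we have $zv_{hw}=\chi_\lambda(z)v_{hw}$ in $M(\lambda)$, and applying the $\UU(\g)$-linear map $\pi$ gives
$$z\pi(v_{hw})=\pi(zv_{hw})=\chi_\lambda(z)\pi(v_{hw})$$
in $L(\lambda)$. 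Since $L(\lambda)$ is cyclic on $\pi(v_{hw})$ and $z$ is central, this forces $z$ to act as $\chi_\lambda(z)$ on all of $L(\lambda)$, so the central character of $L(\lambda)$ equals $\chi_\lambda$.

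For the second assertion, I would use Proposition $2.5.9.$, which identifies $J_\lambda=\UU(\g)\,\mathrm{Ker}\,\chi_\lambda$. Since $L(\lambda)$ also has central character $\chi_\lambda$, every element of $\mathrm{Ker}\,\chi_\lambda$ acts as zero on $L(\lambda)$, so $\mathrm{Ker}\,\chi_\lambda\subset\mathrm{Ann}_{\UU(\g)}L(\lambda)$. Because the annihilator is a two-sided (in particular, left) ideal of $\UU(\g)$, it absorbs left multiplication and therefore contains $\UU(\g)\,\mathrm{Ker}\,\chi_\lambda=J_\lambda$, giving the desired inclusion.

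There is no real obstacle here: the content of the lemma is essentially bookkeeping, since the central character is determined by a single scalar on the highest weight line and this line is preserved (up to nonzero scaling) by any nonzero quotient map. The only subtlety worth flagging is ensuring that $\pi(v_{hw})$ is indeed nonzero, which is immediate from Proposition $2.5.5.$ — the maximal proper submodule $S$ lies in $\bigoplus_{\mu\neq\lambda-\rho}M(\lambda)_\mu$ and so cannot contain $v_{hw}$.
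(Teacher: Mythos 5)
Your proof is correct and follows essentially the same route as the paper's: track the scalar by which a central element $z$ acts on the highest-weight line of $M(\lambda)$ and observe it is preserved by the quotient map to $L(\lambda)$, then deduce the annihilator inclusion from Proposition $2.5.9.$ Your write-up is somewhat more careful than the paper's (explicitly introducing the projection $\pi$, checking $\pi(v_{hw})\neq 0$, and spelling out why $\mathrm{Ker}\,\chi_\lambda\subset\mathrm{Ann}_{\UU(\g)}L(\lambda)$ implies $J_\lambda\subset\mathrm{Ann}_{\UU(\g)}L(\lambda)$), but the underlying argument is identical.
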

\begin{proof}
Let $v_{hw}$ be the canonical generator of $M(\lambda)$ and $v_{hw}'$ a generator of $L(\lambda)$. Pick $z\in Z(\g)$ arbitrarily. It is easy to check $zv_{hw}$ is a highest weight vector of $L(\lambda)$ and hence $zv_{hw}=cv_{hw}'$ for some $c\in\C$. Moreover, since the mapping $v\mapsto zv$ is a $\g$-module homomorphism, and since $z$ was chosen arbitrarily, the conclusion follows.  
\end{proof}
\vspace{0.2cm}
Now set $\g=sl(2, \C)$. Let $V$ be the natural module\footnote[2]{The corresponding representation is given by the inclusion $sl(2, \C)\longrightarrow gl(2, \C). $} of $\g$. Then it is well known fact that the list $\C, V, S^2V,\ldots$ forms a complete list of finite-dimensional simple $\g$-modules. \footnote[3]{Here $\C$ denotes the one dimensional trivial module and $S^kM$ denotes the $k^{th}$ symmetric tensor power of the vector space $M$. }

\begin{myprop}

Set $\h=\C h$ where $\{h,e,f\}$ is the standard basis of $\g$. Let $J$ be a primitive ideal of $\UU(\g)$. Then either $J=J_\lambda$ for some $\lambda\in\h^*$ or $J=Ann_{\UU(\g)}S^kV$ for some $k\in\Z_{\geq 0}$ where by convention $S^0V=\C$ and $S^1V=V$. 
\end{myprop}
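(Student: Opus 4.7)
The plan is to invoke the strong form of Duflo's theorem (Theorem 2.6.2) to reduce the classification of primitive ideals in $\UU(sl(2,\C))$ to the study of annihilators of the simple highest weight modules $L(\lambda)$, and then to split into two cases depending on whether $M(\lambda)$ is simple or not. By Duflo's strong theorem, every primitive ideal $J$ in $\UU(\g)$ is of the form $J=\mathrm{Ann}_{\UU(\g)}L(\lambda)$ for some $\lambda\in\h^{*}$, so it suffices to compute $\mathrm{Ann}_{\UU(\g)}L(\lambda)$ for every such $\lambda$.

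First I would handle the generic case. Proposition $2.6.4$ gives a simple criterion for the simplicity of $M(\lambda)$: for $\g=sl(2,\C)$ with the standard positive root $\alpha$ and coroot $H_\alpha=h$, the module $M(\lambda)$ is simple exactly when $\lambda(h)\notin\Z_{>0}$. Whenever this holds we have $M(\lambda)=L(\lambda)$, so Proposition $2.5.9$ yields
\[
\mathrm{Ann}_{\UU(\g)}L(\lambda)=\mathrm{Ann}_{\UU(\g)}M(\lambda)=\UU(\g)\,\mathrm{Ker}\,\chi_\lambda=J_\lambda,
\]
placing $J$ in the first family of the statement.

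Next I would handle the remaining case, $\lambda(h)\in\Z_{>0}$, say $\lambda(h)=n\geq 1$. Then the highest weight of $L(\lambda)$ is $\lambda-\rho$, which takes the non-negative integer value $n-1$ on $h$. At this point I would appeal to the well-known classification of finite-dimensional simple $sl(2,\C)$-modules: every highest weight module whose highest weight evaluates to a non-negative integer $k$ on $h$ is finite-dimensional, and (by uniqueness of the simple module of a given highest weight, Remark $2.5.3$) must be isomorphic to $S^{k}V$, the $k$-th symmetric power of the natural representation. Consequently $L(\lambda)\cong S^{n-1}V$ and
\[
J=\mathrm{Ann}_{\UU(\g)}L(\lambda)=\mathrm{Ann}_{\UU(\g)}S^{n-1}V,
\]
which lies in the second family with $k=n-1\in\Z_{\geq 0}$.

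The two cases exhaust all possibilities for $\lambda(h)$, so every primitive ideal of $\UU(sl(2,\C))$ has one of the two claimed forms. The only non-routine step is the identification $L(\lambda)\cong S^{n-1}V$ in the second case; this is not really an obstacle since it is a standard consequence of $sl(2,\C)$-theory (Weyl's complete reducibility together with the description of the finite-dimensional simples), but it is the one place where genuinely $sl(2,\C)$-specific input enters the argument—the rest of the proof is just the direct application of Duflo's theorem and the simplicity criterion for Verma modules.
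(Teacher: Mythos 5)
Your proof is correct and follows essentially the same strategy as the paper: invoke the strong form of Duflo's theorem to reduce to annihilators of $L(\lambda)$, then split according to whether $M(\lambda)$ is simple. The only substantive difference is that you handle the entire case $\lambda(h)\notin\Z_{>0}$ in one stroke via Proposition $2.6.4$, whereas the paper first disposes of $\lambda(h)\notin\Z$ via Theorem $2.6.3$ (and Lemma $2.6.5$, Theorem $2.6.1$, Example $2.4.6$) before falling back on Proposition $2.6.4$ for $\lambda(h)\in\Z_{\leq 0}$. Your version is cleaner and avoids the extra machinery. You are also more careful than the paper in tracking the $\rho$-shift: since $L(\lambda)$ has highest weight $\lambda-\rho$, writing $\lambda(h)=n\geq 1$ gives $(\lambda-\rho)(h)=n-1$ and hence $L(\lambda)\simeq S^{n-1}V$ with $k=n-1\in\Z_{\geq 0}$; the paper's proof writes $k\in\Z_{>0}$, which omits the trivial module arising from $\lambda(h)=1$.
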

\begin{proof}
In this case there is only one positive root $\alpha$ and $h$ is the corresponding coroot. By Theorem 2.6.2, $J$ is the annihilator of some $L(\lambda)$ and from Lemma $2.6.6.$ we have $J_\lambda\subset J$. 

If $\lambda(h)\notin\Z$ then by Theorem $2.6.3.$ we get $J=J_\lambda$. 

If $\lambda(h)\in\Z$ then Lemma $2.6.5.$, Theorem $2.6.1.$ and Example $2.4.6.$ imply that $J$ is either the annihilator of $L(\lambda)$ or $L(-\lambda)$. Moreover, we have $\lambda(h)\in\Z_{>0}$ or $\lambda(h)\in\Z_{\leq 0}$ . If $\lambda(h)\in Z_{\leq 0}$ then by Proposition $2.6.4.$ we get $L(\lambda)= M(\lambda)$, and hence $J=J_\lambda$. If $\lambda(h)\in\Z_{>0}$ then it is well known that $L(\lambda)$ is finite-dimensional, \cite{Bourbaki}. Hence $L(\lambda)\simeq S^k V$ for some $k\in\Z_{>0}$ and $J=Ann_{\UU(\g)}S^kV$. 
\end{proof}
\vspace{0.2cm}
\begin{myrmk}
Note that if $i<j$ then $e^{i+1}$ and $f^{i+1}$ annihilate $S^i V$ but they do not annihilate $S^jV$. Hence $Ann_{\UU(\g)}S^iV\neq Ann_{\UU(\g)}S^jV$ if $i\neq j$. 
\end{myrmk}
\vspace{0.2cm}
\begin{myprop}
Set $\h=\C h$. If $J=Ann_{\UU(\g)}S^kV$ for some $k\in\Z_{\geq 0}$ then $J\neq J_\lambda$ for all $\lambda\in\h^*$. 
\end{myprop}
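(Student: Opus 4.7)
The plan is to derive a contradiction from a dimension comparison: the quotient $\UU(\g)/J$ is finite-dimensional when $J = Ann_{\UU(\g)} S^k V$, whereas $\UU(\g)/J_\lambda$ is infinite-dimensional for every $\lambda \in \h^*$, so the two ideals cannot coincide.

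First I would observe that $S^k V$ is a simple $\g$-module of dimension $k+1$, so the natural map $\UU(\g) \longrightarrow \text{End}_\C(S^k V)$ factors through $\UU(\g)/Ann_{\UU(\g)} S^k V$ and embeds this quotient into the $(k+1)^2$-dimensional algebra $\text{End}_\C(S^k V)$. In particular, $\dim_\C \UU(\g)/J \leq (k+1)^2 < \infty$.

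Next, for the opposite side, fix any $\lambda \in \h^*$ and consider $M(\lambda)$, which by definition of $J_\lambda$ is a faithful $\UU(\g)/J_\lambda$-module. Writing $f$ for a nonzero element spanning $\n_- = \g^{-\alpha}$ and $v_{hw} = 1 \otimes 1_\lambda$ for the canonical highest weight vector, Proposition $2.5.2$ gives the basis $\{f^n v_{hw} : n \in \Z_{\geq 0}\}$ of $M(\lambda)$, with $f^n v_{hw}$ lying in the weight space of weight $\lambda - \rho - n\alpha$. Since distinct weight spaces intersect trivially, the vectors $f^n v_{hw}$ are linearly independent. Consequently, if a finite linear combination $\sum_n c_n f^n$ lies in $J_\lambda$, applying it to $v_{hw}$ forces $\sum_n c_n f^n v_{hw} = 0$, hence all $c_n = 0$. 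Thus the classes of $1, f, f^2, \ldots$ in $\UU(\g)/J_\lambda$ are linearly independent, and $\UU(\g)/J_\lambda$ is infinite-dimensional.

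Combining the two steps, if we had $J = J_\lambda$ for some $\lambda$, then $\UU(\g)/J$ would be simultaneously finite-dimensional and infinite-dimensional, a contradiction. I do not expect any real obstacle here; once one notices that annihilators of finite-dimensional simple modules have finite-codimensional quotients, the linear independence of $\{f^n v_{hw}\}$ inside the infinite-dimensional Verma module immediately rules out equality with any $J_\lambda$.
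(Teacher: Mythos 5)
Your proof is correct, but it takes a different route from the paper's. The paper exhibits an explicit separating element: since $S^kV$ is $(k+1)$-dimensional, $f^{k+1}\in J$, while Proposition $2.5.2$ shows $M(\lambda)$ is free as a $\C[f]$-module, so no power of $f$ can lie in $Ann_{\UU(\g)}M(\lambda)=J_\lambda$ (Proposition $2.5.9$); hence $f^{k+1}\in J\setminus J_\lambda$. You instead compare codimensions: $\UU(\g)/J$ embeds in $\mathrm{End}_\C(S^kV)$ and is therefore finite-dimensional, while $\UU(\g)/J_\lambda$ is infinite-dimensional because the classes of $1,f,f^2,\ldots$ remain independent there. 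Both arguments ultimately rest on the same fact about the Verma module --- the linear independence of $\{f^nv_{hw}\}$, i.e.\ freeness over $\C[f]$ --- but they package it differently. Your version is marginally more general: it shows that the annihilator of any finite-dimensional module has finite codimension and so can never equal an ideal of infinite codimension, with no need to name a witness. The paper's version is sharper in one respect: it produces a concrete element of $J$ not in $J_\lambda$, thereby showing $J\not\subseteq J_\lambda$ and not merely $J\neq J_\lambda$, in the same spirit as Remark $2.6.7$ where powers of $e$ and $f$ are used to separate the ideals $I_k$ from one another. Either argument is complete and adequate for the classification in Theorem $2.6.9$.
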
 
\begin{proof} 
Let $J=Ann_{\UU(\g)}S^kV$. Notice that Proposition $2.5.2.$ implies that, for all $\lambda$, $M(\lambda)$ is free as a $\C[f]$-module. But then $f^{k+1}$ cannot annihilate any $M(\lambda)$. Taking into account Proposition $2.5.9.$, the argument is complete. 
\end{proof}
Define an equivalence relation $\sim$ on $\h^*$ by $\lambda\sim -\lambda$. Then, for $[\lambda]\in\h^*/\sim$,  set $J_{[\lambda]}=J_\lambda=J_{-\lambda}$.\footnote[1]{It is trivial to see that this is an equivalence relation} Also set $I_k=Ann_{\UU(\g)}S^kV$. The following theorem completes the classification of primitive ideals of $\UU(sl(2,\C))$. 
\begin{mythm}
Let $\g=sl(2,\C)$ and $\h=\C h$. The list $\{I_k:k\in\Z_{\geq 0}\}\cup\{J_{[\lambda]}:[\lambda]\in\h^*/\sim\}$ is a complete classification of distinct primitive ideals of $\UU(\g)$. 
\end{mythm}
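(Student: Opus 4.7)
The plan is to treat this as an assembly result, stitching together the four ingredients already in the section: exhaustion of primitive ideals by the two families, well-definedness of $J_{[\lambda]}$, distinctness inside $\{I_k\}$, and disjointness of the two families. No new machinery should be needed.

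First I would establish exhaustion. By Proposition $2.6.6$, any primitive ideal $J\subset\UU(sl(2,\C))$ is either $J_\lambda$ for some $\lambda\in\h^*$, or of the form $I_k=Ann_{\UU(\g)}S^kV$ for some $k\in\Z_{\geq 0}$. Therefore the union $\{I_k\}\cup\{J_{[\lambda]}\}$ covers every primitive ideal, provided the notation $J_{[\lambda]}$ is unambiguous.

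Next I would justify the parametrization by $\h^*/\sim$. Example $2.4.8$ identifies the Weyl group $W$ of $sl(2,\C)$ with $\Z/2\Z$, acting on $\h^*$ by $\lambda\mapsto-\lambda$; hence the $W$-orbit of $\lambda$ is exactly the equivalence class $[\lambda]$. By part (2) of the weak Duflo theorem (Theorem $2.6.1$), $J_\lambda=J_{\lambda'}$ iff $\lambda'\in W\lambda$, so $J_{[\lambda]}:=J_\lambda$ is well-defined and the map $[\lambda]\mapsto J_{[\lambda]}$ is injective.

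It remains to dispose of the other two distinctness claims. Within $\{I_k\}$, Remark $2.6.7$ already produces a separating element: $e^{i+1}$ annihilates $S^iV$ but not $S^jV$ for $i<j$, so $I_i\neq I_j$. Disjointness between the two families is precisely Proposition $2.6.8$: for each $k$, the element $f^{k+1}$ lies in $I_k$ but, by Proposition $2.5.2$, acts freely on every Verma module $M(\lambda)$, so by Proposition $2.5.9$ it cannot lie in $J_\lambda=\UU(\g)\,Ker\,\chi_\lambda$. Putting these four observations together gives the list as a complete and irredundant classification.

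There is essentially no substantive obstacle; the real work has been done in Duflo's theorem and in the two computations about the $I_k$'s. The only bookkeeping point to keep clean is that $J_\lambda$ and $Ann_{\UU(\g)}L(\lambda)$ need not coincide in the integral case, but this discrepancy is exactly what the family $\{I_k\}$ records (via Proposition $2.6.4$, $L(\lambda)\simeq M(\lambda)$ when $\lambda(h)\in\Z_{\leq0}$, whereas for $\lambda(h)\in\Z_{>0}$ one lands on a finite-dimensional $S^kV$), so one must resist the urge to absorb the $I_k$ into the $J_{[\lambda]}$.
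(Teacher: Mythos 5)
Your proposal is correct and follows exactly the route the paper takes: the paper's own proof is a one-line citation of Proposition 2.6.6 (exhaustion), Theorem 2.6.1 (well-definedness of $J_{[\lambda]}$ via $W$-orbits), Remark 2.6.7 (distinctness of the $I_k$), and Proposition 2.6.8 (disjointness of the two families). You have merely unpacked those citations, added the helpful pointer to Example 2.4.8 identifying $W\cong\Z/2\Z$ with the $\sim$-equivalence, and flagged the pitfall about $J_\lambda\neq\mathrm{Ann}_{\mathcal{U}(\mathfrak{g})}L(\lambda)$ in the dominant integral case, all of which is consonant with the paper.
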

\begin{proof}
The statement follows from Proposition $2.6.6.$, Theorem $2.6.1.$, Remark $2.6.7.$ and Proposition $2.6.8.$
\end{proof}



\newpage
\section{Infinite-dimensional Lie algebras. Tensor modules. }

In this section we introduce the infinite-dimensional Lie algebras $gl(\infty, \C), sl(\infty, \C),$ $o(\infty, \C), sp(\infty, \C)$ and present a construction of the simple tensor modules. Relevant references here are \cite{PStyr} and \cite{PS}. 
 
\subsection{The infinite-dimensional Lie algebras : $gl(\infty, \C), sl(\infty, \C), o(\infty, \C), sp(\infty, \C)$}

Set $\JJ=\Z\setminus\{0\}$. Let $V$ and $W$ be countable-dimensional vector spaces and let $\langle\cdot,\cdot\rangle:W\times V\longrightarrow \C$ be a non-degenerate pairing. The Lie algebra $gl(\infty, \C)$ is defined as the tensor product $V\otimes W$ equipped with the following Lie bracket : 
$$[v_1\otimes w_1, v_2\otimes w_2] = \langle w_1, v_2\rangle v_1\otimes w_2 - \langle w_2, v_1\rangle v_2\otimes w_1.\mbox{ }\mbox{ }\mbox{ }\mbox{ }\mbox{ }\mbox{(3)}$$
A result by G. Mackey states that, given two countable-dimensional vector spaces $V$ and $W$ with a non-degenerate pairing, there always exists bases, $(v_i)_{i\in\JJ}$ of $V$ and $(w_i)_{i\in\JJ}$, dual to each other with respect to the pairing, i.e. $\langle w_i, v_j\rangle=\delta_{ij}$ where $\delta_{ij}$ is Kronecker's delta. Taking this into account, we can think of the Lie algebra $gl(\infty, \C)$ as the space of finitary infinite matrices via the mapping $\phi : v_j\otimes w_i\mapsto E_{i,j}$ for $i,j\in\JJ$, where the $E_{i,j}$ are the standard coordinate matrices and the bracket is the usual one. Here finitary means that the respective matrices have finitely many nonzero entries. Given this, we can define the Lie algebra $sl(\infty, \C)$ both as the kernel of the $\langle\cdot,\cdot\rangle$ pairing or as the subalgebra of finitary infinite traceless matrices. 

Set $\JJ_n$ to be the set consisting of the first $n$ elements of the sequence $1, -1, 2, -2,\ldots$ Define $V_n=span\{v_i\}_{i\in\JJ_n}$ and $W_n=span\{w_j\}_{j\in\JJ_n}$. Notice that the tensor product $V_n\otimes W_n$ equipped with the Lie bracket given by (3) is a subalgebra of $gl(\infty, \C)$ isomorphic to $gl(n, \C)$. To see this, observe that the restriction of the pairing $\langle\cdot,\cdot\rangle$ to $W_n\times V_n$ implies a restriction of $\phi$ to $V_n\otimes W_n$. In this way we get that $V_n\otimes W_n$ is the algebra spanned by $E_{i,j}$ with $i,j\in\JJ_n$. Similarly, the kernel of the restriction of the pairing $\langle\cdot,\cdot\rangle$ to $W_n\times V_n$ is a subalgebra of $sl(\infty, \C)$ isomorphic to $sl(n, \C)$. From this point on, when we talk about the algebras $gl(n, \C)$ and $sl(n, \C)$, we shall assume that they are embedded in $gl(\infty, \C)$ or $sl(\infty, \C)$ in the way described above.  

It is easy to see that $gl(n, \C)$ and $sl(n, \C)$ are subalgebras of $gl(n+1, \C)$ and $sl(n+1, \C)$ via the inclusion 
$$A\mapsto\left(\begin{array}{cc}A&0\\0&0\end{array}\right). $$
We immediately get $gl(\infty, \C)=\displaystyle\lim_{\longrightarrow}gl(n, \C)$ and $sl(\infty, \C)=\displaystyle\lim_{\longrightarrow}sl(n, \C)$, where the direct limit is taken using inclusions like above. The subalgebras $gl(n, \C)$ and $sl(n, \C)$ are called standard exhaustions of $gl(\infty, \C)$, or respectively $sl(\infty, \C)$. 

The Lie algebras $o(\infty, \C)$ and $sp(\infty, \C)$ can be defined in invariant terms in a similar manner by using a symmetric and respectively, an antisymmetric bilinear form on $V$. However, for simplicity reasons, we will just mention that $o(\infty, \C)$ is the subalgebra of $gl(\infty, \C)$ spanned by $(E_{i,j} - E_{-i, -j})_{i,j\in \JJ}$ and that $sp(\infty, \C)$ is the subalgebra spanned of $(sign(j)E_{i,j}-sign(i)E_{-j, -i})_{i,j\in \JJ}$. The subalgebras $o(n, \C)$ and $sp(n, \C)$ are then obtained by restricting the index set $\JJ$ to $\JJ_n$. 

One usually denotes $W$ by $V_*$ (and $W_n$ by $V_{n}^*$), also pointing in this way to its role as a "continuous" dual to $V$ (see \cite{PS}). For $p, q\geq 0$ we endow the space of mixed tensors $V^{\otimes(p, q)}=V^{\otimes p}\otimes V_*^{\otimes q}$ with the $gl(\infty, \C)$-module structure  
$$(u\otimes u^*)\cdot(v_1\otimes\ldots\otimes v_p \otimes v_1^*\otimes\ldots\otimes v_q^*) = $$
$$= \displaystyle\sum_{i=1}^{p}\langle u^*, v_i\rangle v_1\otimes\ldots\otimes v_{i-1}\otimes u\otimes v_{i+1}\otimes\ldots\otimes v_p\otimes v_1^*\otimes\ldots\otimes v_q^* - $$
$$ - \displaystyle\sum_{j=1}^{q}\langle v_j^*, u\rangle v_1\otimes\ldots\otimes v_p\otimes v_1^*\otimes\ldots\otimes v_{j-1}^*\otimes u^*\otimes v_{j+1}\otimes\ldots\otimes v_q^*. $$ 



\vspace{0.2cm}

\begin{myrmk}
If $\g$ is one of the subalgebras $sl(\infty, \C), o(\infty, \C), sp(\infty, \C)$ of $gl(\infty, \C)$ then $V$ (respectively $V_*$) is a module over $\g$ and it is termed the natural $\g$-module (respectively conatural $\g$-module). The associated representation is termed the natural representation (respectively conatural representation) of $\g$. Similarly, $V_n$ and $V_{n}^*$ are modules over the algebras $gl(n, \C), sl(n, \C), o(n, \C)$ and $sp(n, \C)$. In the case when $\g\simeq o(\infty, \C), sp(\infty, \C)$ one easily obtains $V\simeq V_*$. 
\end{myrmk}

The vectors in $V_n$, respectively $V_{n}^*$, can be viewed in a natural way as column, respectively row, vectors with $n$ entries, by mapping $v_i\mapsto e_i$ and $w_j\mapsto e_j^T\mbox{ }i,j\in\JJ_n$, where the $e_i$ are the standard coordinate vectors. It is clear that $V_n$, respectively $V_{n}^*$, is a $gl(n, \C)$-submodule of $V_{n+1}$, respectively $V_{n+1}^*$, using the inclusion 
$$u\mapsto\left(\begin{array}{c}u\\0\end{array}\right). $$ 
Of course, that this also holds in the cases of $sl(n, \C), o(n, \C)$ and $sp(n, \C)$, and in all cases $V$, respectively $V_*$, is isomorphic to the direct limit $\displaystyle\lim_{\longrightarrow}V_n$, respectively $\displaystyle\lim_{\longrightarrow}V_n^*$.  

The natural and conatural representation of $\g$ can also be characterized in invariant terms. For instance, one can show that up to isomorphism, $V$ is the only simple module of $\g$ for which there exists an exhaustion of $\g$ by simple Lie algebras $\g_1\subset\g_2\subset\ldots$ such that $V$ restricted to $\g_n$ is isomorphic to the natural representation of $\g_n$ plus a trivial module for all $n$. A similar statement holds for $V_*$.  
\vspace{0.2cm}


\subsection{Tensor Modules}

As we will see, simple tensor modules are highest weight modules with respect to a certain choice of the Borel subalgebra. Also, we will point out that they are not, in general, highest weight modules with respect to "the standard" choice of Borel subalgebra. In this section, we present the construction of simple tensor modules for $gl(\infty, \C)$ and $sl(\infty, \C)$. For $o(\infty, \C)$ and $sp(\infty, \C)$, the constructions are similar and they can be found in great detail in \cite{PStyr}. 

Consider the following decomposition of $gl(\infty, \C)$ 
$$gl(\infty, \C)=\h_{gl}\oplus(\bigoplus_{\alpha\in\Delta}\C X_\alpha), $$
where 
$$\h_{gl}=\displaystyle\bigoplus _{i\in\Z\setminus\{0\}}\C E_{i, i} \mbox{ }\mbox{ }\mbox{ }\mbox{ }\mbox{ }\Delta = \{ \varepsilon_i - \varepsilon_j : i,j \in \Z \setminus \{ 0 \}, i\neq j\}. $$
Here $\varepsilon_i\in\h_{gl}^*$ with $\varepsilon_i(E_{j, j})=\delta_{ij}$ and $X_{\varepsilon_i-\varepsilon_j}=E_{i, j}$. We have $[h,X_\alpha]=\alpha(h)X_\alpha^{gl}$ for all $\alpha\in\Delta$ and $h\in\h_{gl}$. 

In the finite-dimensional case, the choice of a Borel subalgebra of $gl(n, \C)$ corresponds to an ordering of the set $\{1, 2,\ldots,n\}$, or equivalently, to a splitting of the set of roots $\Delta_{gl(n, \C)}$ into positive and negative roots. In the infinite-dimensional case, the choice of a Borel subalgebra corresponds to linear orders on $\JJ$.  Taking this into account, define the set of positive roots as follows 
$$\Delta^+=\{\varepsilon_i-\varepsilon_j : 0<i<j\}\bigcup\{\varepsilon_i-\varepsilon_j : i<j<0\}\bigcup\{\varepsilon_i-\varepsilon_j : j<0<i\}. $$
This choice of the positive roots corresponds to the order $1>2>\ldots>-2>-1$. Set $\n_+=\displaystyle\oplus_{i,j} \C E_{i,j}$, where the sum is taken over all pair of indexes $(i,j)$ with the property that $\varepsilon_i-\varepsilon_j\in\Delta^+$. The Borel subalgebra is then $\b_{gl}^+=\h_{gl}\oplus\n_+$. By $\b_{gl}^-$ we denote the opposite Borel subalgebra. When referring to a Cartan subalgebra or Borel subalgebra of one of the Lie algebras $gl(\infty, \C), sl(\infty, \C), o(\infty, \C)$ or $sp(\infty, \C)$, we will omit the abbreviation of the Lie algebra from which it comes and just write $\h, \b_+$ or $\b_-$ where there is no confusion. 

Now define a partition $\lambda$ as a finite decreasing set of positive integers : $\lambda = (\lambda_1 \geq \lambda_2 \geq\ldots\geq \lambda_k)$ (the empty partition is denoted by $0$) and let $\lambda^T$ be the dual partition given by $\lambda^T_i=\#\{j : \lambda_j\geq i\}$. These partitions have a combinatorial interpretation via the so called Young diagrams, \cite{GWl}. The Young diagram of the dual partition is obtained by transposing the Young diagram of $\lambda$. Define $|\lambda|=\displaystyle\sum_{i=1}^{k} \lambda_i$.

For a partition $\lambda$ denote by $c_\lambda$ the Young projector corresponding to the Young diagram of shape $\lambda$, \cite{GW}.  For $|\lambda|=d$, $c_\lambda$ acts naturally on $V^{\otimes d}$. Let 
$$\SSS_\lambda V=im(c_\lambda : V^{\otimes d}\longrightarrow V^{\otimes d}). $$ 

Following the theory of irreducible representations of the symmetric group $\SS_d$, denote  
$$H_\lambda = \C[\SS_d]c_\lambda. $$

Now we can proceed to the construction in \cite{PStyr}. For $i\in\{1,\ldots ,p\}$ and $j\in\{1,\ldots,q\}$, consider the contraction $\Phi_{i,j}:V^{\otimes(p,q)}\longrightarrow V^{\otimes(p-1,q-1)}$ given by  
$$\Phi_{i,j}(v_1\otimes\ldots\otimes v_p\otimes v_1^*\otimes\ldots\otimes v_q^*)=\langle v_j^*,v_i\rangle v_1\otimes\ldots\otimes \widehat{v_i}\otimes\ldots\otimes v_p\otimes v_1^*\otimes\ldots\otimes \widehat{v_j^*}\otimes\ldots\otimes v_q^*. $$
The hat in the right hand side denotes that the respective vector is omitted from the tensor product. Let $V^{\{p,q\}}=\displaystyle\bigcap_{i,j} \ker \Phi_{i,j}$. By convention $V^{\{p,0\}}=V^{\otimes p}$ and $V^{\{0,q\}}=V_*^{\otimes q}$. Lastly, for partitions $\lambda$ and $\mu$ with $|\lambda|=p$ and $|\mu|=q$, we set 
$$V_{\lambda\mu}=V^{\{p,q\}}\bigcap (\SSS_\lambda V\otimes \SSS_\mu V_*). $$ 

When we refer to pairs of partitions we use the notation $(\lambda,\mu)$. However, for simplicity reasons, we use the notation $V_{\lambda\mu}$ instead of $V_{(\lambda,\mu)}$. The following theorem is one of the results in \cite{PStyr} concerning tensor modules. 

\begin{mythm}
For any $p,q$ there is an isomorphism of $(gl(\infty, \C), \SS_p\times\SS_q)$-modules : 
$$V^{\{p,q\}}\simeq \displaystyle\bigoplus_{|\lambda|=p, |\mu|=q}V_{\lambda\mu}\otimes (H_\lambda\otimes H_\mu). $$
For any partitions $\lambda, \mu$, the $gl(\infty, \C)$-module $V_{\lambda\mu}$ is a simple highest weight module with highest weight $\chi=\displaystyle\sum_{i\in\\Z_{>0}} \lambda_i\varepsilon_i-\displaystyle\sum_{i\in\Z_{>0}}\mu_i\varepsilon_{-i}$. Furthermore, $V_{\lambda\mu}$ is simple when regarded by restriction as an $sl(\infty, \C)$-module. 
\end{mythm}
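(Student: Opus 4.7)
For the decomposition, my plan is to start from the classical Schur--Weyl isomorphism applied to each factor,
\[
V^{\otimes p}\otimes V_*^{\otimes q}\simeq \bigoplus_{|\lambda|=p,\,|\mu|=q}(\SSS_\lambda V\otimes \SSS_\mu V_*)\otimes(H_\lambda\otimes H_\mu),
\]
viewed as $(gl(\infty,\C),\SS_p\times\SS_q)$-modules. I would then verify that $V^{\{p,q\}}$ is itself a $(gl(\infty,\C),\SS_p\times\SS_q)$-submodule: each $\Phi_{i,j}$ is $gl(\infty,\C)$-equivariant because $\langle\cdot,\cdot\rangle$ is $gl(\infty,\C)$-invariant, and the family $\{\Phi_{i,j}\}$ is merely permuted by $\SS_p\times\SS_q$, so its common kernel is fixed setwise. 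Applying $c_\lambda\otimes c_\mu$ to $V^{\{p,q\}}$ then extracts exactly $V_{\lambda\mu}$, by the very definition of the latter, and the general form of the $\SS_p\times\SS_q$-isotypic decomposition assembles this into the asserted direct sum.

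For the highest weight statement, I propose the candidate vector
\[
v_{\lambda\mu}=c_\lambda(v_1^{\otimes\lambda_1}\otimes v_2^{\otimes\lambda_2}\otimes\cdots)\otimes c_\mu(w_{-1}^{\otimes\mu_1}\otimes w_{-2}^{\otimes\mu_2}\otimes\cdots).
\]
Because the two factors use only positive, respectively only negative, indices in $\JJ$ and $\langle w_{-k},v_l\rangle=0$ whenever $k,l>0$, every contraction $\Phi_{i,j}$ kills $v_{\lambda\mu}$, so $v_{\lambda\mu}\in V_{\lambda\mu}$. A direct weight calculation gives $v_{\lambda\mu}$ weight $\chi$, and annihilation by $\n_+$ reduces factor by factor to the classical fact that Young-projecting the canonical staircase tensor produces a highest weight vector in the finite-dimensional Schur module, taking into account that the chosen order $1>2>\cdots>-2>-1$ places the positive indices at the top for $V$ and the negative indices at the bottom for $V_*$. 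For simplicity I plan to use the exhaustion $V_{\lambda\mu}=\bigcup_n V_{\lambda\mu}^{(n)}$ with $V_{\lambda\mu}^{(n)}:=V_{\lambda\mu}\cap(V_n^{\otimes p}\otimes V_n^{*\otimes q})$: once $n$ exceeds the lengths of $\lambda$ and $\mu$, classical invariant theory of mixed-tensor representations of $gl(n,\C)$ identifies $V_{\lambda\mu}^{(n)}$ as the simple finite-dimensional $gl(n,\C)$-module of highest weight $\chi|_{\h_{gl(n,\C)}}$. Any nonzero $gl(\infty,\C)$-submodule of $V_{\lambda\mu}$ must meet some $V_{\lambda\mu}^{(n)}$ nontrivially and is therefore forced, by $gl(n,\C)$-irreducibility, to contain $V_{\lambda\mu}^{(n)}$ for all sufficiently large $n$; passing to the union yields $V_{\lambda\mu}$.

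For the restriction to $sl(\infty,\C)$, every weight of $V_{\lambda\mu}\subset V^{\otimes p}\otimes V_*^{\otimes q}$ is a finite $\Z$-linear combination of the $\varepsilon_i$, while the unique (up to scalar) element of $\h_{gl}^*$ vanishing on $\h_{sl}$ is the infinitely-supported trace functional $\mathrm{tr}=\sum_{i\in\JJ}\varepsilon_i$. Distinct $gl$-weights therefore restrict to distinct $sl$-weights, so the $gl$- and $sl$-weight-space decompositions of $V_{\lambda\mu}$ coincide. Writing $gl(\infty,\C)=sl(\infty,\C)\oplus \C E_{1,1}$ as vector spaces and noting that $E_{1,1}$ acts by a scalar on every weight vector, PBW gives $\UU(gl(\infty,\C))v=\UU(sl(\infty,\C))v$ for each weight vector $v\in V_{\lambda\mu}$; hence simplicity as a $gl(\infty,\C)$-module descends directly to simplicity as an $sl(\infty,\C)$-module. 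The main technical obstacle is the finite-dimensional input just invoked: identifying $V_{\lambda\mu}^{(n)}$ as a simple $gl(n,\C)$-module with the correct highest weight rests on the classical Weyl construction of irreducible mixed-tensor representations of $gl(n,\C)$, for which one may cite \cite{PStyr}.
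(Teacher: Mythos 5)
The paper states this theorem with attribution to \cite{PStyr} and gives no proof of its own, so there is no internal argument to compare your proposal against. Your outline is the natural one and is essentially the argument of \cite{PStyr}: apply Schur--Weyl duality on $V^{\otimes p}$ and $V_*^{\otimes q}$ separately, observe that $V^{\{p,q\}}$ is a $(gl(\infty,\C),\SS_p\times\SS_q)$-submodule so the isotypic decomposition restricts, identify the multiplicity spaces as $V_{\lambda\mu}$, exhibit a highest weight vector, and prove simplicity by exhaustion along the standard $gl(n,\C)$'s.

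A few steps need a bit more care than you give them. The equality $(c_\lambda\otimes c_\mu)(V^{\{p,q\}})=V_{\lambda\mu}$ is not purely definitional: $V_{\lambda\mu}$ is $V^{\{p,q\}}\cap\mathrm{im}(c_\lambda\otimes c_\mu)$, and the nontrivial containment uses that $c_\lambda^2=n_\lambda c_\lambda$ with $n_\lambda\neq 0$, so any element of $V^{\{p,q\}}$ lying in $\mathrm{im}(c_\lambda\otimes c_\mu)$ is a scalar multiple of its own $(c_\lambda\otimes c_\mu)$-image and therefore lies in $(c_\lambda\otimes c_\mu)(V^{\{p,q\}})$. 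You should also justify that the candidate $v_{\lambda\mu}$ is nonzero; this depends on the arrangement of the tensor slots being compatible with the Young tableau defining $c_\lambda$ and $c_\mu$. For the exhaustion step, the classical fact that the contraction-free part of $\SSS_\lambda V_n\otimes\SSS_\mu V_n^*$ is the simple mixed-tensor $gl(n,\C)$-module requires $n$ at least on the order of $\ell(\lambda)+\ell(\mu)$, not merely $n$ exceeding the two lengths separately, and since $\JJ_n$ interleaves positive and negative indices you must take $n$ large enough that all of $v_1,\ldots,v_{\ell(\lambda)}$ and $w_{-1},\ldots,w_{-\ell(\mu)}$ actually lie in $V_n$, $V_n^*$. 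None of these are serious obstructions, and your argument for $sl(\infty,\C)$-simplicity (coincidence of the $\h_{gl}$- and $\h_{sl}$-weight decompositions because weights of $V_{\lambda\mu}$ are finitely supported, together with the PBW decomposition $gl(\infty,\C)=sl(\infty,\C)\oplus\C E_{1,1}$ and the scalar action of $E_{1,1}$ on weight vectors) is correct as stated.
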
     
\vspace{0.2cm}
\begin{myrmk}
There exist an alternative way to introduce the simple tensor modules. In \cite{PS} it is proven that the simple subquotients of all the modules $V^{\otimes(p,q)}$ are exactly the simple tensor modules. As we already know, for fixed $p, q\in\N$ and partitions $\lambda,\mu$ with $|\lambda|=p$ and $|\mu|=q$, $V_{\lambda\mu}$ is a submodule of $V^{\otimes(p,q)}$. 
\end{myrmk}
\vspace{0.2cm}
The standard Borel subalgebra for $gl(\infty, \C)$ and $sl(\infty, \C)$ corresponds to the order $1>-1>2>-2>\ldots$. Denote this subalgebra by $\b_+^{st}$. It is to see that the simple tensor modules $V_{\lambda 0}$ and $V_{0 \mu}$ are also highest weight modules with respect to $\b_+^{st}$, and moreover the highest weight spaces coincide for the two choices, $\b_+$ and $\b_+^{st}$.

Now suppose $V_{\lambda\mu}$ is a highest weight module with respect to $\b_+^{st}$, $|\lambda|=p$ and $|\mu|=q$. Let $\chi'$ be a highest weight vector with respect to $\b_+^{st}$. Note that it is a weight vector with respect to $\h_{gl}$. Let $\{x_i\}_{i\in\JJ}$ and $\{y_i\}_{i\in \JJ}$ be the dual bases of $V$ and $V_*$ used in section 3.1. to construct $gl(\infty, \C)$. 

Set $\chi'=\sum_{i\in I} u_i\otimes v_i$ where $u_i\in V^{\otimes p}$ and $v_i\in\otimes V_*^{\otimes q}$ are pure tensors of type $c_i x_{i_1}\otimes\ldots\otimes x_{i_p}$ respectively $d_jy_{j_1}\otimes\ldots\otimes y_{j_q}$ and $I$ is some finite index set, $c_i,d_i\in \C$. Using that $\n_+^{st}\chi'=0$ and that $\chi'$ is a weight vector, we see that $ u_i\in\C x_1\otimes\ldots x_1$ and $v_i \in \C y_{-1}\otimes\ldots y_{-1}$, where $x_1$ is tensored $p$ times and $y_{-1}$, $q$ times. But then this implies that $x_1\otimes\ldots\otimes x_1\otimes y_{-1}\otimes\ldots\otimes y_{-1}$ is a highest weight vector with respect to $\b_+^{st}$. This cannot happen unless the $x_1$ or $y_{-1}$ is missing from the tensor product, or equivalently if $pq=0$. Thus $V_{\lambda\mu}$ is highest weight with respect to $\b_+^{st}$ if and only if $pq=0$. 
\newpage
\section{Results on annihilators of simple tensor modules}
\vspace{0.1cm}
Before we proceed to proving the main theorems of this thesis, we need some preliminary results. We split the preliminaries into two sections : one section concerning finite-dimensional simple modules and finite-dimensional semisimple Lie algebras, and one section concerning integrable modules and infinite-dimensional locally semisimple Lie algebras. In each section, the statements do not follow any particular order and for those results that are well known, we will skip the proof. 

\subsection{Preliminaries concerning finite-dimensional modules over semisimple Lie algebras}
\vspace{0.1cm}
In this section $\g$ denotes a finite-dimensional semisimple Lie algebra and $\h$ a Cartan subalgebra of $\g$. 
\begin{mydef}\

Let $\Delta$ be the set of roots of $\g$ and $\Delta_+$ the set of positive roots. For each root $\alpha\in\Delta$, denote by $H_\alpha\in\h$ the corresponding coroot. 
\begin{enumerate}
\item A weight $\lambda\in\h^*$ is called integral if $\lambda(H_\alpha)\in\Z$ for all $\alpha\in\Delta_+$. 
\item A weight $\lambda\in\h^*$ is called dominant if $\lambda(H_\alpha)\geq 0$ for all $\alpha\in\Delta_+$. 
\end{enumerate}
\end{mydef}
\vspace{0.1cm}
\begin{myprop}\cite{Dixmier1}

Set $\rho\in\h^*$ to be the half-sum of the positive roots. If $L(\lambda)$ is a finite-dimensional simple $\g$-module then $\lambda-\rho$ is integral and dominant.  
\end{myprop}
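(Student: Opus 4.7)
The plan is to reduce to the well-known $sl(2,\C)$ case by restricting $L(\lambda)$ to the $sl(2,\C)$-subalgebras attached to each positive root, and then invoke the classification of finite-dimensional simple $sl(2,\C)$-modules from Example $2.2.3$ (or equivalently the last part of Theorem $2.6.9$).

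First I recall the setup: by Proposition $2.5.5$, $L(\lambda)$ is the unique simple quotient of $M(\lambda)$, so it inherits a highest weight vector $v\neq 0$ with $hv=(\lambda-\rho)(h)v$ for all $h\in\h$ and $X_\alpha v=0$ for every $\alpha\in\Delta_+$ (by Proposition $2.5.2$ and Remark $2.5.3$). Next, for each fixed $\alpha\in\Delta_+$ I would consider the subalgebra
\[
\mathfrak{s}_\alpha=\C X_\alpha\oplus\C X_{-\alpha}\oplus\C H_\alpha\subset\g,
\]
which by Theorem $2.2.2$ is isomorphic to $sl(2,\C)$ via the identification $X_\alpha\mapsto e$, $X_{-\alpha}\mapsto f$, $H_\alpha\mapsto h$ (after rescaling $X_{-\alpha}$ so that $[X_\alpha,X_{-\alpha}]=H_\alpha$).

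Now I would restrict $L(\lambda)$ to $\mathfrak{s}_\alpha$ and consider the cyclic submodule $N_\alpha=\UU(\mathfrak{s}_\alpha)v$. Since $X_\alpha v=0$ and $H_\alpha v=(\lambda-\rho)(H_\alpha)v$, the vector $v$ is a highest weight vector for $\mathfrak{s}_\alpha$ of weight $(\lambda-\rho)(H_\alpha)$, so $N_\alpha$ is a highest weight $sl(2,\C)$-module. Because $L(\lambda)$ is finite-dimensional by hypothesis, $N_\alpha$ is finite-dimensional as well. The standard argument for $sl(2,\C)$ then applies: iterating $X_{-\alpha}$ on $v$ produces weight vectors of weights $(\lambda-\rho)(H_\alpha),(\lambda-\rho)(H_\alpha)-2,\ldots$, which must eventually vanish; using the commutation identity $[X_\alpha,X_{-\alpha}^k]=kX_{-\alpha}^{k-1}(H_\alpha-(k-1))$ and the fact that the smallest $k$ with $X_{-\alpha}^k v=0$ is positive, one forces $(\lambda-\rho)(H_\alpha)=k-1\in\Z_{\geq 0}$.

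Since $\alpha\in\Delta_+$ was arbitrary, I conclude $(\lambda-\rho)(H_\alpha)\in\Z_{\geq 0}$ for every positive root, which is precisely the joint statement that $\lambda-\rho$ is integral and dominant (Definition $4.1.1$). There is no serious obstacle here; the only subtlety is to make sure $v$ really is a highest weight vector of $L(\lambda)$ in the expected sense, which follows from Remark $2.5.3$ because $L(\lambda)$ is a nonzero quotient of $M(\lambda)$, so any nonzero image of the Verma generator survives in $L(\lambda)$ and carries weight $\lambda-\rho$. Everything else is the $sl(2,\C)$ reduction.
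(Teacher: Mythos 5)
The paper does not supply a proof of this proposition; it simply cites \cite{Dixmier1}. So there is no internal proof to compare against, and the question is only whether your argument stands on its own.

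Your $sl(2,\C)$-reduction is the standard (and correct) textbook argument. The key steps are all in order: $L(\lambda)$ is a nonzero quotient of $M(\lambda)$, so the image of the Verma generator is a nonzero highest weight vector $v$ of weight $\lambda-\rho$ (Proposition $2.5.2$, Remark $2.5.3$); by Theorem $2.2.2$ each $\alpha\in\Delta_+$ furnishes an $sl_2$-triple $\mathfrak{s}_\alpha=\C X_\alpha\oplus\C H_\alpha\oplus\C X_{-\alpha}$ after rescaling so that $[X_\alpha,X_{-\alpha}]=H_\alpha$; and $N_\alpha=\UU(\mathfrak{s}_\alpha)v$ is a finite-dimensional highest weight $\mathfrak{s}_\alpha$-module. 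The string $v,\;X_{-\alpha}v,\;X_{-\alpha}^2v,\ldots$ consists of $H_\alpha$-eigenvectors with distinct eigenvalues $(\lambda-\rho)(H_\alpha),(\lambda-\rho)(H_\alpha)-2,\ldots$, hence they are linearly independent while nonzero, so finite-dimensionality forces a minimal $k\geq1$ with $X_{-\alpha}^kv=0$; the identity $[X_\alpha,X_{-\alpha}^k]=kX_{-\alpha}^{k-1}\bigl(H_\alpha-(k-1)\bigr)$ applied to $v$ then yields $(\lambda-\rho)(H_\alpha)=k-1\in\Z_{\geq0}$. As $\alpha\in\Delta_+$ was arbitrary, $\lambda-\rho$ is integral and dominant in the sense of Definition $4.1.1$. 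No gaps.
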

\vspace{0.1cm}
\begin{myprop}\cite{Humphreys}, \cite{FH}

Let $W$ be the Weyl group of $\g$. For $\lambda\in\h^*$ and $w\in W$ define $w\cdot\lambda=w(\lambda+\rho)-\rho$. Then for any integral weight $\lambda$, there exist at most one $w\in W$ such that $w\cdot\lambda$ is a dominant weight.  
\end{myprop}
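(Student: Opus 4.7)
The plan is to translate dominance of $w\cdot\lambda$ into the statement that $w(\lambda+\rho)$ lies in the \emph{open} dominant Weyl chamber, and then invoke the standard fact that $W$ acts simply transitively on Weyl chambers. The crucial numerical input is $\rho(H_\alpha)=1$ for every simple root $\alpha$, which converts the weak inequality defining dominance into a strict one once one shifts by $\rho$.

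First, for any simple root $\alpha$ one has
\[
w(\lambda+\rho)(H_\alpha) \;=\; (w\cdot\lambda)(H_\alpha)+\rho(H_\alpha) \;=\; (w\cdot\lambda)(H_\alpha)+1.
\]
If $\lambda$ is integral and $w\cdot\lambda$ is dominant, then $(w\cdot\lambda)(H_\alpha)$ is a nonnegative integer, so $w(\lambda+\rho)(H_\alpha)\geq 1>0$ for every simple $\alpha$. In other words, $w(\lambda+\rho)$ is a \emph{regular} (strictly dominant) element of $\h^*$.

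Now suppose $w_1,w_2\in W$ both produce dominant weights $w_i\cdot\lambda$. By the previous step, $w_1(\lambda+\rho)$ and $w_2(\lambda+\rho)$ both lie in the open dominant Weyl chamber and they belong to the same $W$-orbit, namely that of $\lambda+\rho$. Since the closed dominant Weyl chamber is a fundamental domain for $W$ acting on $\h^*$, and the open chamber meets every orbit in at most one point, we obtain $w_1(\lambda+\rho)=w_2(\lambda+\rho)$. Moreover $\lambda+\rho$ itself is regular, because it has a regular $W$-translate; hence its stabilizer in $W$ is trivial, forcing $w_1=w_2$.

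The only point that requires care is the very last deduction: the proposition asks for uniqueness of $w$, not merely uniqueness of $w\cdot\lambda$, and this refinement rests on the regularity of $\lambda+\rho$. That regularity is not part of the hypothesis but is a consequence of it: the $\rho$-shift built into the dot-action is precisely what pushes $\lambda+\rho$ off every reflecting hyperplane as soon as some $w\cdot\lambda$ is dominant. Without this shift the statement would fail outright, which is what makes the combination ``integral weight'' plus ``dot-action dominance'' work so cleanly.
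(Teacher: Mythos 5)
Your proof is correct. The paper states this proposition without proof, deferring to Humphreys and Fulton--Harris, and your argument is exactly the standard one given there: $\rho(H_\alpha)=1$ on each simple coroot converts the weak inequality $(w\cdot\lambda)(H_\alpha)\geq 0$ into the strict inequality $w(\lambda+\rho)(H_\alpha)\geq 1>0$, placing $w(\lambda+\rho)$ in the open fundamental chamber; the fundamental-domain property of the closed chamber then forces $w_1(\lambda+\rho)=w_2(\lambda+\rho)$, and since a regular element has trivial stabilizer in $W$, this gives $w_1=w_2$. One small remark: you invoke integrality to conclude $(w\cdot\lambda)(H_\alpha)$ is a \emph{nonnegative integer}, but the argument does not actually need this, since $(w\cdot\lambda)(H_\alpha)\geq 0$ alone already yields $w(\lambda+\rho)(H_\alpha)\geq 1>0$; so your proof in fact establishes the stronger statement for arbitrary $\lambda\in\h^*$, and the integrality hypothesis in the proposition is superfluous for this particular conclusion.
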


\vspace{0.1cm}

Now let $M$ be a simple $\g$-module. We know that $(Ann_{\UU(\g)}M)\cap Z(\g)=ker\mbox{ }\chi_\lambda$ for some $\lambda\in\h^*$. This is a maximal ideal in $Z(\g)$ and $Z(\g)$ is a polynomial algebra in $r$ indeterminates where $r=rank \mbox{ }\g$. Hilbert's Nullstellensatz gives us that the associated variety of this maximal ideal is a point in $\C^r$. Denote this point by $Q(M)$. If $M=L(\mu)$ for some $\mu\in\h^*$ then we also denote $Q(M)$ by $Q(\mu)$. 

\vspace{0.1cm}
The following claim is an immediate consequence of Proposition $4.1.3.$

\begin{myprop}
If $M$ and $N$ are finite-dimensional simple $\g$-modules then $Q(M)=Q(N)$ if and only if $M\simeq N$. 
\end{myprop}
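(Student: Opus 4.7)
The reverse implication is immediate: isomorphic modules share the same annihilator, so in particular the same intersection with $Z(\g)$ and hence the same point in $\C^r$. The content of the statement is the forward direction.

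My plan for the forward direction is first to rewrite $Q$ in terms of central characters. Since $Z(\g)$ is a polynomial algebra in $r$ indeterminates, a point in $\C^r$ corresponds bijectively to a maximal ideal of $Z(\g)$, which is the same as a homomorphism $Z(\g)\to\C$. Thus $Q(M)=Q(N)$ is equivalent to the equality of central characters of $M$ and $N$. Since $M$ and $N$ are finite-dimensional simple $\g$-modules, they are highest weight modules, so I can write $M\simeq L(\mu_1)$ and $N\simeq L(\mu_2)$. By Lemma $2.6.5$, the central character of $L(\mu_i)$ equals $\chi_{\mu_i}$, so the assumption becomes $\chi_{\mu_1}=\chi_{\mu_2}$.

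Next I invoke Proposition $2.5.6$, which gives $\mu_2\in W\mu_1$, i.e., $\mu_2=w\mu_1$ for some $w\in W$. Writing $\nu_i=\mu_i-\rho$, this translates into $\nu_2=w(\nu_1+\rho)-\rho=w\cdot\nu_1$ for the shifted ("dot") action defined in Proposition $4.1.3$. Since $L(\mu_1)$ and $L(\mu_2)$ are finite-dimensional, Proposition $4.1.2$ guarantees that both $\nu_1$ and $\nu_2$ are integral and dominant. Now I apply Proposition $4.1.3$ to $\nu_1$: the identity element of $W$ already takes $\nu_1$ to the dominant weight $\nu_1$, and by uniqueness any $w$ with $w\cdot\nu_1$ dominant must be the identity. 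Hence $w=1$, giving $\nu_1=\nu_2$ and thus $\mu_1=\mu_2$, so $M\simeq N$.

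There is no real obstacle here; the proof is essentially a chain of translations between equivalent pieces of data (point in $\C^r$, maximal ideal of $Z(\g)$, central character, $W$-orbit of a weight, $W$-orbit under the dot action of an integral dominant weight). The only point that requires a moment of care is the bookkeeping between the usual $W$-action from Proposition $2.5.6$ and the shifted dot action from Proposition $4.1.3$; once the substitution $\nu_i=\mu_i-\rho$ is made, Proposition $4.1.3$ applies directly and finishes the argument.
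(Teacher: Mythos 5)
Your proof is correct and follows essentially the same route as the paper: identify $Q$ with the central character, use Proposition 2.5.6 (the paper uses the equivalent Theorem 2.6.1 part 2) to translate $Q(M)=Q(N)$ into a shifted $W$-orbit condition, and then invoke Propositions 4.1.2 and 4.1.3 to conclude that the only possible Weyl group element is the identity. The only cosmetic difference is the parameterization (the paper writes $M=L(\lambda+\rho)$ directly, you set $\nu_i=\mu_i-\rho$), but the substance is identical.
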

\begin{proof}
Set $M=L(\lambda+\rho)$ and $N=L(\mu+\rho)$. Then $Q(M)=Q(N)$ is equivalent with $ker\mbox{ }\chi_{\lambda+\rho}=ker\mbox{ }\chi_{\mu+\rho}$ which by part $2$ of Theorem $2.6.1.$ is equivalent with $\mu+\rho\in W(\lambda+\rho)$ or, $\mu=w\cdot\lambda$ for some $w\in W$. Apply Proposition $4.1.2.$  and Proposition $4.1.3.$ for $\lambda$ and $\mu$. Conclusion follows easily. 
\end{proof}

\vspace{0.1cm}
\begin{mylma}\cite{Bourbaki}
 
Let $x$ be a semisimple element of $\g$. Then there exist a Cartan subalgebra $\h_0\subset\g$ such that $x\in\h_0$. 

\end{mylma}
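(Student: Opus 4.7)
The plan is to exhibit a maximal toral subalgebra containing $x$ and invoke Definition $2.2.4$, which identifies Cartan subalgebras of a semisimple Lie algebra with maximal toral subalgebras. Thus the entire argument reduces to showing that $x$ lies in \emph{some} toral subalgebra, and then enlarging it to a maximal one.

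First I would observe that the one-dimensional subspace $\C x$ is already a toral subalgebra of $\g$: since $[x,x]=0$, it is closed under the bracket, and since $x$ is semisimple and $ad_{\lambda x}=\lambda\,ad_x$ for every $\lambda\in\C$, every element of $\C x$ is semisimple. So the collection
\[
\mathcal{F}=\{\,\t\subset\g \;:\; \t \text{ is a toral subalgebra and } \C x\subset\t\,\}
\]
is non-empty. Next I would choose $\h_0\in\mathcal{F}$ of maximal dimension; this is possible because $\dim\g<\infty$. (Alternatively one applies Zorn's lemma after noting that an ascending union of toral subalgebras is again a subalgebra consisting entirely of semisimple elements.)

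It remains to check that this $\h_0$ is a maximal toral subalgebra of $\g$, not merely maximal among those containing $\C x$. But any toral subalgebra $\t'\supset\h_0$ automatically contains $\C x$, hence lies in $\mathcal{F}$; by the maximality of $\dim\h_0$ inside $\mathcal{F}$ we obtain $\t'=\h_0$. So $\h_0$ is a maximal toral subalgebra, i.e.\ a Cartan subalgebra of $\g$ by Definition $2.2.4$, and by construction $x\in\h_0$.

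There is no serious obstacle in this route; the only delicate point is the verification that $\C x$ is actually toral, which depends essentially on the hypothesis that $x$ itself be semisimple (so that every scalar multiple of $x$ has a semisimple adjoint action), and on the fact that in a semisimple Lie algebra the abstract Jordan decomposition agrees with $ad$-semisimplicity, so that ``semisimple element'' is an unambiguous notion. Everything else is a straightforward dimension/Zorn argument.
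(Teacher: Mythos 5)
The paper offers no proof of this lemma; it simply cites Bourbaki. Your argument is correct, and it is essentially the shortest route within this thesis's conventions: Definition $2.2.4$ takes ``Cartan subalgebra of a semisimple Lie algebra'' to mean ``maximal toral subalgebra,'' so once you observe that $\C x$ is toral (which, as you note, requires exactly that $x$ be $ad$-semisimple and that scalar multiples of diagonalizable operators are diagonalizable), the rest is a finite-dimensional maximality argument. Your check that a dimension-maximal element of $\mathcal{F}$ is in fact maximal among \emph{all} toral subalgebras --- because any toral subalgebra containing $\h_0$ automatically contains $\C x$ and hence lies in $\mathcal{F}$ --- closes the only gap one could worry about. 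It may be worth being aware that Bourbaki itself defines Cartan subalgebras as nilpotent self-normalizing subalgebras, so a proof following that reference would be considerably longer, typically passing through the centralizer $\mathfrak{z}_{\g}(x)$, showing it is reductive, taking a Cartan subalgebra of it, and verifying that this is a Cartan subalgebra of $\g$. Within the definitional framework adopted here, your approach is the natural one and is complete.
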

\vspace{0.1cm}
We conclude this section with a theorem that was proven independently S. Fernando and V. Kac.  in the 1980's. 
\begin{mythm}
Let $\g$ be a finite-dimensional Lie algebra. Let $M$ be a $\g$-module and set $\g[M]=\{g\in\g : \dim span\{m, gm, g^2m,\ldots\}<\infty, \forall m\in M\}$. Then $g[M]$ is a subalgebra of $\g$.  
\end{mythm}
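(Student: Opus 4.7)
First, closure of $\g[M]$ under scalar multiplication is immediate: $\operatorname{span}\{m, (cg)m, (cg)^2m, \ldots\} \subseteq \operatorname{span}\{m, gm, g^2m, \ldots\}$ for any $c \in \C$, and $0 \in \g[M]$ trivially. The real content is closure under addition and under the Lie bracket, which I will treat uniformly.

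Fix $g, h \in \g[M]$. Since $\g$ is finite-dimensional, the Lie subalgebra $\mathfrak{a} \subseteq \g$ generated by $g$ and $h$ is also finite-dimensional. My plan is to reduce the theorem to the following claim: \emph{for every $m \in M$, the cyclic $\mathfrak{a}$-module $\UU(\mathfrak{a}) m$ is finite-dimensional.} Granting this claim, both $g+h$ and $[g,h]$ lie in $\mathfrak{a}$ and hence preserve the finite-dimensional subspace $\UU(\mathfrak{a})m$ of $M$; so they act locally finitely on $m$, and as $m$ is arbitrary, $g+h, [g,h] \in \g[M]$.

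To establish the claim, I would realise $\UU(\mathfrak{a})m$ as the increasing union of finite-dimensional subspaces $V_0 \subseteq V_1 \subseteq \cdots$, where $V_0 = \C[g]m + \C[h]m$ (finite-dimensional by the local-finiteness of $g$ and $h$) and $V_{n+1} = V_n + \mathfrak{a} \cdot V_n$. Each $V_n$ is finite-dimensional since $\mathfrak{a}$ is; the main obstacle is to show the chain stabilises. The hypothesis furnishes only pointwise polynomial annihilators $p_g^v(g)v = 0$ and $p_h^v(h)v = 0$, depending on $v$, so some uniformisation is required. Here the finite-dimensionality of $\g$ is expected to enter essentially: because $\operatorname{ad}_g$ is a linear operator on the finite-dimensional space $\g$, the iterated brackets $h, [g,h], [g,[g,h]], \ldots$ span a finite-dimensional subspace of $\g$ and satisfy a polynomial relation $p(\operatorname{ad}_g)h = 0$. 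Reconciling these ambient polynomial identities in $\g$ with the local-finiteness identities for $g, h$ on $M$ --- via a Poincar\'e--Birkhoff--Witt rearrangement of monomials in $\UU(\mathfrak{a})$ --- should then uniformly bound $\dim V_n$ and force stabilisation. This reconciliation, bridging the two distinct sources of finiteness (ambient in $\g$ versus local on $M$), is the genuine technical content of the Fernando--Kac theorem and the step I anticipate to be the hardest.
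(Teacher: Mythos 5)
Your argument reduces the whole theorem to the claim that $\UU(\mathfrak{a})m$ is finite-dimensional for every $m$, where $\mathfrak{a}$ is the Lie subalgebra generated by $g$ and $h$ --- and then does not prove that claim. This is a genuine gap, and not a small one: the claim is at least as strong as the theorem itself. Indeed, once one knows Fernando--Kac, one gets $\mathfrak{a}\subseteq\g[M]$, and then a Poincar\'e--Birkhoff--Witt ordering with respect to a basis of $\mathfrak{a}$ shows $\UU(\mathfrak{a})m=\C[x_1]\cdots\C[x_n]m$ is finite-dimensional; but that PBW argument needs \emph{every} basis element of $\mathfrak{a}$ --- in particular $[g,h]$, $[g,[g,h]]$, etc. --- to act locally finitely, which is precisely what is to be proved. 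Starting only from local finiteness of the two generators, the stabilization of your chain $V_0\subseteq V_1\subseteq\cdots$ is exactly the hard content (for $\g=sl(2,\C)$ with $g=e$, $h=f$ it is the statement that a module on which $e$ and $f$ act locally nilpotently is locally finite), and the suggested reconciliation of the ambient identity $p(ad_g)h=0$ with the pointwise annihilating polynomials is not carried out and does not obviously lead anywhere. You have correctly disposed of closure under scalars, but closure under addition and under the bracket --- all of the substance --- remains unestablished.

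For contrast, the proof sketched in the paper (Kac's) never tries to bound $\UU(\mathfrak{a})m$. Closure under addition is handled separately via PBW. For the bracket, one first checks the identity $e^{t\,ad_x}y\cdot m=e^{tx}ye^{-tx}\cdot m$ (meaningful because $x$ acts locally finitely on $M$ and $ad_x$ is an operator on the finite-dimensional space $\g$); since $(e^{tx}ye^{-tx})^n=e^{tx}y^ne^{-tx}$, each conjugate $e^{t\,ad_x}y$ again lies in $\g[M]$. Then one forms $F(t)=\frac1t(e^{t\,ad_x}y-y)$ and uses finite-dimensionality of $\g$ to see that all the $F(t)$ lie in a finite-dimensional subspace $A$ spanned by finitely many elements of $\g[M]$; passing to the limit $t\to 0$ inside the closed subspace $A$ yields $[x,y]=\lim_{t\to 0}F(t)\in A\subseteq\g[M]$. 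This is where the finite-dimensionality of $\g$ genuinely enters, and it sidesteps entirely the uniformization problem on which your plan is stuck. If you want to salvage your approach, you would need an independent proof of local finiteness of $\UU(\mathfrak{a})m$ from the generators alone, which is not easier than the theorem.
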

\begin{proof}
We will present just a sketch of Kac's proof from \cite{Kac}. A first easy observation is that if $x\in\g[M]$ then for any $t\in\C$ we have $tx\in\g[M]$. Next step is to prove that if $z_1, z_2\in\g[M]$ then $z_1+z_2\in\g[M]$ thus showing, combined with the previous observation, that $\g[M]$ is a subspace of $\g$. This is done using the Poincare-Birkhoff-Witt theorem. 

Now pick $x,y\in \g$ and $t\in\C$. One can prove the following identity $e^{tad_x}y\cdot m=e^{tx}ye^{-tx}\cdot m, \forall m\in M$. Given that $(e^{tx}ye^{-tx})^n=e^{tx}y^ne^{-tx}$, we obtain that if $x, y\in\g[M]$ then $e^{tad_x}y\in\g[M]$. Define the element
$$F(t)=\displaystyle\frac{1}{t}(e^{tad_x}y-y)$$ 
and set $A=span(\{y\}\cup\displaystyle\bigcup_{t\in\C}\{F(t)\})$. $A$ is a subspace of $\g$ so it is finite-dimensional. This means it is spanned by finitely many elements of type $tF(t)+y=e^{tad_x}y\in\g[M]$. In particular, $A\subset \g[M]$. Hence $\displaystyle\lim_{t\rightarrow 0}F(t)\in A\subset\g[M]$ and so $[x,y]\in\g[M]$. This concludes that $\g[M]$ is a subalgebra of $\g$.  
\end{proof}

\begin{myrmk} 
In the proof of Theorem $4.1.6.$, the only place where the finite-dimensionality of $\g$ is used is when arguing that $A$ is finite-dimensional as a subspace of $\g$. However, notice that for $\g$ infinite-dimensional and locally finite, we can include $x, y$ into a finite-dimensional subalgebra $\g_0$ of $\g$ and then include $A$ in $\g_0$. The rest of the argument follows through. Thus the statement of Theorem $4.1.6.$ holds in a greater generality and in particular it holds for the Lie algebras we are interested in : $gl(\infty, \C), sl(\infty, \C), o(\infty, \C)$ and $sp(\infty, C)$. 
\end{myrmk}
\subsection{Preliminaries on integrable modules and infinite-dimensional locally\newline simple Lie algebras}
\vspace{0.1cm}
In this section $\g$ denotes an infinite-dimensional locally simple Lie algebra unless otherwise stated.
\begin{mylma}
Let $M, N$ be integrable $\g$-modules such that for any finite-dimensional semisimple subalgebra $\g_0$, the number of non-isomorphic simple constituents of $M$ and $N$ restricted to $\g_0$ is finite. Denote these sets by $SC(M, \g_0)$ and $SC(N, \g_0)$. Suppose $Ann_{\UU(\g)}N\subseteq Ann_{\UU(\g)}M$. Then for any finite-dimensional semisimple Lie subalgebra $\g_0\subset\g$, we have $SC(M, \g_0)\subseteq SC(N, \g_0)$. If $Ann_{\UU(\g)}N=Ann_{\UU(\g)}M$ then $SC(N,\g_0)=SC(M,\g_0)$ for any finite-dimensional semisimple subalgebra $\g_0\subset\g$.  
\end{mylma}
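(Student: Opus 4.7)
The plan is to use the center $Z(\g_0)$ as a separating tool, in the spirit of Proposition $4.1.4$. Suppose $L=L(\lambda)\in SC(M,\g_0)$ and, for contradiction, that $L\notin SC(N,\g_0)$. Write $SC(N,\g_0)=\{L(\mu_1),\ldots,L(\mu_n)\}$ (finite by hypothesis). By Proposition $4.1.4$ non-isomorphism of finite-dimensional simple $\g_0$-modules is detected by central characters, so $\chi_\lambda\neq\chi_{\mu_i}$ for every $i$. Hence for each $i$ I can pick $w_i\in Z(\g_0)$ with $\chi_{\mu_i}(w_i)\neq\chi_\lambda(w_i)$ and set
$$z_i=w_i-\chi_{\mu_i}(w_i)\cdot 1\in Z(\g_0),$$
so that $\chi_{\mu_i}(z_i)=0$ while $\chi_\lambda(z_i)\neq 0$. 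Define
$$z=z_1 z_2\cdots z_n\in Z(\g_0)\subset\UU(\g_0)\subset\UU(\g).$$
Since $Z(\g_0)$ is commutative, $\chi_{\mu_i}(z)=0$ for every $i$, while $\chi_\lambda(z)=\chi_\lambda(z_1)\cdots\chi_\lambda(z_n)\neq 0$.

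Next I need to know that $M$ and $N$ split as semisimple $\g_0$-modules. Since $M$ is integrable over $\g$ and $\g_0$ is finite-dimensional, every vector $m\in M$ generates a finite-dimensional $\g_0$-submodule (it is contained in the finite-dimensional span generated by iterated action of a basis of $\g_0$, using that each basis vector acts locally finitely). Thus $M|_{\g_0}$ is locally finite, and by Weyl's semisimplicity theorem (Theorem $2.3.4$) each finite-dimensional $\g_0$-submodule is semisimple; a sum of semisimples being semisimple, $M|_{\g_0}$ is completely reducible, and likewise for $N|_{\g_0}$.

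Consequently $z$ annihilates every simple constituent of $N|_{\g_0}$ (by Schur, each $z_i$ acts on $L(\mu_i)$ as $\chi_{\mu_i}(z_i)=0$), so by semisimplicity $z$ kills all of $N$. Thus $z\in Ann_{\UU(\g)}N\subseteq Ann_{\UU(\g)}M$. But $z$ must then act as zero on the constituent $L(\lambda)$ of $M|_{\g_0}$, forcing $\chi_\lambda(z)=0$, contradicting the construction. Therefore $L\in SC(N,\g_0)$, proving the first inclusion. The second assertion follows immediately by symmetry, applying the first inclusion once in each direction.

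The only conceptual obstacle is the passage from "annihilates every simple constituent" to "annihilates the whole module"; this is handled by the semisimplicity of the restriction, which in turn rests on the integrability of $M$ and $N$ together with Weyl's theorem. Everything else is a bookkeeping argument about central characters of finite-dimensional simple $\g_0$-modules.
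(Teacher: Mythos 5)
Your proof is correct and takes essentially the same approach as the paper: restrict to a finite-dimensional semisimple $\g_0$, use that $M|_{\g_0}$ and $N|_{\g_0}$ are semisimple with finitely many isotypic classes, and separate nonisomorphic simple constituents by central characters of $Z(\g_0)$. The paper phrases the last step through the intersections $Ann_{\UU(\g)}N\cap Z(\g_0)=\bigcap_i\ker\chi_{\mu_i}$ and the correspondence between ideals of $Z(\g_0)$ and their zero sets, invoking Proposition $4.1.4$, while you instead build an explicit separating element $z\in Z(\g_0)$ and argue by contradiction --- this is the same argument unfolded into a concrete construction rather than stated in variety language.
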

\begin{proof}
Pick a finite-dimensional semisimple Lie subalgebra $\g_0$. Let $m=\# SC(M, \g_0)$ and $n=\# SC(N, \g_0)$. Set 
$$M=\displaystyle\bigoplus_{i=1}^mHom_{\g_0}(X_i,M)\otimes X_i, $$
$$N=\displaystyle\bigoplus_{i=1}^nHom_{\g_0}(Y_i,N)\otimes Y_i, $$
where $X_i$ and $Y_i$ are respectively the elements of $SC(M, \g_0)$ and $SC(N, \g_0)$, and $Hom_{\g_0}(X_i, M)$, $Hom_{\g_0}(Y_i, M)$ are trivial $\g_0$-modules. Set $X_i=L(\lambda_i), i\in\{1,\ldots,,m\}$ and $Y_j=L(\mu_j), j\in\{1,\ldots,n\}$. We have $(Ann_{\UU(\g)}N)\cap Z(\g_0)\subseteq(Ann_{\UU(\g)}M)\cap Z(\g_0)$. Hence 
$$\displaystyle\bigcap_{i=1}^n ker\mbox{ }\chi_{\mu_i}\subseteq\displaystyle\bigcap_{j=1}^m ker\mbox{ }\chi_{\lambda_j}. $$  
This inclusion of ideals in $Z(\g_0)$ implies a reversed inclusion of their associated varieties. Hence 
$$\{Q(X_1),\ldots,Q(X_m)\}\subseteq\{Q(Y_1),\ldots,Q(Y_n)\}. $$
But then Proposition $4.1.4.$ completes the argument. 

The last part of the statement follows using the same line of reasoning but replacing inclusions with equalities. 
\end{proof}

\vspace{0.1cm}
\begin{myprop}
Let $M, N$ be $\g$-modules such that $Ann_{\UU(\g)}N\subseteq Ann_{\UU(\g)}M$. If $N$ is integrable and if, for any finite-dimensional semisimple subalgebra $\g_0\subset\g$, $SC(N, \g_0)$ is a finite set, then $M$ is an integrable $\g$-module and for any finite-dimensional semisimple subalgebra $\g_0\subset \g$, $SC(M, \g_0)\subseteq SC(N, \g_0)$. If $Ann_{\UU(\g)}N=Ann_{\UU(\g)}M$ then $SC(N,\g_0)=SC(M,\g_0)$ for any finite-dimensional semisimple subalgebra $\g_0\subset\g$.  
\end{myprop}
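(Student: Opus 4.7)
The plan is to reduce to Lemma 4.2.1 by extracting integrability of $M$ and finiteness of $SC(M, \g_0)$ from the annihilator inclusion alone. The central observation is that for any $\g$-module $P$ and any subalgebra $\g_0 \subset \g$, the actions of $\UU(\g_0)$ on $P$ through either the $\g$- or the $\g_0$-structure coincide, so $Ann_{\UU(\g)}P \cap \UU(\g_0) = Ann_{\UU(\g_0)}P$. Thus the hypothesis $Ann_{\UU(\g)}N \subseteq Ann_{\UU(\g)}M$ descends to $Ann_{\UU(\g_0)}N \subseteq Ann_{\UU(\g_0)}M$ for every finite-dimensional semisimple $\g_0 \subset \g$.

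Fix such a $\g_0$. Since $N$ is integrable and $SC(N, \g_0)$ is finite, Weyl's theorem gives $N|_{\g_0} \simeq \bigoplus_{Y \in SC(N,\g_0)} \mathrm{Hom}_{\g_0}(Y, N) \otimes Y$ with each $Y$ finite-dimensional, so $Ann_{\UU(\g_0)}N = \bigcap_Y Ann_{\UU(\g_0)}Y$ is a finite intersection of two-sided ideals of finite codimension, hence itself of finite codimension in $\UU(\g_0)$. The descent inclusion then forces $\dim\, \UU(\g_0)/Ann_{\UU(\g_0)}M < \infty$, and for every $m \in M$ the cyclic submodule $\UU(\g_0)m$ is finite-dimensional as a quotient of $\UU(\g_0)/Ann_{\UU(\g_0)}M$. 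Two consequences follow at once. First, every $x \in \g$ lies in some finite-dimensional simple subalgebra $\g_0$, and $\C[x]m \subseteq \UU(\g_0)m$ is finite-dimensional for every $m$; hence $x$ acts locally finitely on $M$, and $M$ is integrable. Second, by Weyl's theorem each $\UU(\g_0)m$ is semisimple over $\g_0$, so $M|_{\g_0}$ decomposes as a direct sum of finite-dimensional simples.

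To see that $SC(M,\g_0)$ is finite, intersect with the center: $(Ann_{\UU(\g_0)}M) \cap Z(\g_0) \supseteq \bigcap_{Y \in SC(N,\g_0)} \ker\chi_Y$, a finite intersection of maximal ideals in the polynomial algebra $Z(\g_0)$ whose associated variety is the finite set $\{Q(Y) : Y \in SC(N,\g_0)\}$. The variety of the larger left-hand ideal lies inside this finite set, and by Proposition 4.1.4 it contains the pairwise distinct points $Q(X)$ for $X \in SC(M,\g_0)$; thus $SC(M,\g_0)$ is finite. With $M$ integrable and both $SC$-sets finite for every $\g_0$, Lemma 4.2.1 applies directly and yields the desired containment $SC(M, \g_0) \subseteq SC(N, \g_0)$, together with the equality case when the annihilators agree. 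The main delicate point is the opening move of extracting integrability of an a priori arbitrary $M$ from annihilator data alone; the trick is that finite-codimensionality of $Ann_{\UU(\g_0)}N$ propagates to $M$ and controls not merely single orbits $\C[x]m$ but entire cyclic $\UU(\g_0)$-submodules at once.
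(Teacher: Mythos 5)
Your proposal is correct, and its overall architecture coincides with the paper's: descend the annihilator inclusion to $Ann_{\UU(\g_0)}N\subseteq Ann_{\UU(\g_0)}M$, establish integrability of $M$, bound $SC(M,\g_0)$ by intersecting with $Z(\g_0)$ and comparing associated varieties via Proposition $4.1.4$, then invoke Lemma $4.2.1$. The genuine difference is in the integrability step. The paper picks a semisimple element $h\in\g_0$, places it in a Cartan subalgebra via Lemma $4.1.5$, builds the explicit annihilating polynomial $z_0=\prod_{\lambda\in\Lambda(\g_0,\h_0)}(h-\lambda(h))$ from the (finite) union of supports of the constituents of $N$, concludes $h\in\g_0[M]$, and then needs the Fernando--Kac theorem (Theorem $4.1.6$) together with the fact that $\g_0$ is generated by its semisimple elements to get $\g_0[M]=\g_0$. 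You instead observe that $Ann_{\UU(\g_0)}N$ is a finite intersection of annihilators of finite-dimensional simples, hence of finite codimension in $\UU(\g_0)$, so that every cyclic module $\UU(\g_0)m$ is a quotient of the finite-dimensional algebra $\UU(\g_0)/Ann_{\UU(\g_0)}M$. This buys you more with less: it bypasses Lemma $4.1.5$ and Theorem $4.1.6$ entirely, and it yields directly that $M$ restricted to each $\g_0$ is locally finite and (by Weyl) a direct sum of finite-dimensional simples, which is exactly the form in which integrability is consumed by Lemma $4.2.1$; the paper's route only gives local finiteness element-by-element and leaves that upgrade implicit. The remaining steps (finiteness of $SC(M,\g_0)$ and the appeal to Lemma $4.2.1$, including the equality case) match the paper's argument essentially verbatim.
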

\begin{proof} We first show that $M$ is integrable. Let $\g_0$ be an arbitrary finite-dimensional semisimple subalgebra of $\g$. Pick $h\in\g_0$ semisimple. By Lemma $4.1.5.$, $h$ lies in some Cartan subalgebra $\h_0$ of $\g_0$. Since $N$ is integrable, its restriction to $\g_0$ is a sum of finite-dimensional simple modules which are also weight modules with respect to $\h_0$. Let $\Lambda(\g_0, \h_0)$ be the union of the supports of the elements of $SC(N, \g_0)$. 

Since $SC(N, \g_0)$ is finite, we obtain that $\Lambda(\g_0, \h_0)$ is finite. Set $z_0=\displaystyle\prod_{\lambda\in\Lambda(\g_0, \h_0)}(h-\lambda(h))\in\UU(\g_0)$. Then it is easy to check that $z_0\in Ann_{\UU(\g_0)}N$. Since $Ann_{\UU(\g)}N\subseteq Ann_{\UU(\g)}M$, we get $Ann_{\UU(\g_0)}N\subseteq Ann_{\UU(\g_0)}M$. Hence $z_0\in Ann_{\UU(\g_0)}M$. Taking into account that $z_0$ is a polynomial in $h$ we get $h\in\g_0[M]$. This implies that the Fernando-Katc subalgebra $\g_0[M]$ contains all semisimple elements of $\g_0$. As $\g_0$ is generated by its semisimple elements, so we get, using Theorem $4.1.6.$, that $\g_0[M]=\g_0$. 

We have $\g_0=\g_0[M]\subset\g[M]$. Letting $\g_0$ be any finite-dimensional subalgebra of $\g$ from our fixed standard exhaustion, we see that $\g[M]=\g$. Thus $M$ is integrable. 

Next step is to prove that $SC(M, \g_0)$ is finite. We have 
$$(Ann_{\UU(\g_0)}N)\cap Z(\g_0)\subseteq(Ann_{\UU(\g_0)}M)\cap Z(\g_0). \mbox{ }\mbox{ }\mbox{ }\mbox{ }\mbox{ }\mbox{ }\mbox{ (3) }$$ 
Let $V_1\subset\C^{rank\mbox{ }\g_0}$ be the associated variety of the ideal from the left hand side and $V_2\subset\C^{rank\mbox{ }\g_0}$ be the associated variety of the ideal from the right hand side of (3). We have $V_2\subseteq V_1$. As $SC(N, \g_0)$ is finite, we get that $V_1$ is finite, and hence $V_2$ is finite. But this can happen if only if $SC(M, \g_0)$ is finite. Here we have used that if $(X_i)_{i\in I}\subset SC(M, \g_0)$ for some index set $I$, then $\{Q(X_i) : i\in I\}\subset V_1$ ; the inclusion becomes equality if $I$ is finite.  Thus $SC(M, \g_0)$ is finite. 

The proof is complete by applying Lemma $4.2.1.$ 
\end{proof}

Now we turn our attention to simple tensor modules. For this purpose, we introduce a relation on pairs of partitions $(\lambda,\mu)$ which will turn out to be a partial order on the set of simple tensor modules. In the rest of the section, $\g$ will denote one of the Lie algebras $gl(\infty, \C), sl(\infty, \C), o(\infty, \C)$ or $sp(\infty, \C)$. 

\begin{mydef}\

\begin{enumerate}
\item For a pair of partitions $(\lambda,\mu)=(\lambda_1\geq\lambda_2\geq\ldots\geq\lambda_p,-\mu_q\geq -\mu_{q-1}\geq\ldots\geq -\mu_1)$ define 
$$GT((\lambda,\mu)):=\{(\lambda_1'\geq\lambda_2'\geq\ldots\geq\lambda_p',-\mu_q'\geq -\mu_{q-1}'\geq\ldots\geq -\mu_1') : $$
$$ \lambda_{i+1}\leq\lambda_i'\leq\lambda_i, i\in\{1,\ldots,p\}\mbox{ }\lambda_{p+1}=0\mbox{ }\mbox{ }\mbox{ and }\mbox{ }\mbox{ }\mu_{i+1}\leq\mu_i'\leq\mu_i, i\in\{1,\ldots,q\}\mbox{ }\mu_{q+1}=0\}. $$ 
\item For a partition $\lambda=(\lambda_1\geq\lambda_2\geq\ldots\geq\lambda_n)$ define
$$GT(\lambda):=\{(\mu_1,\mu_2,\ldots\,\mu_{p-1}) : \lambda_{i+1}\leq\mu_i\leq\lambda_i, i\in\{1,\ldots,p-1\}\}. $$
\item For a set $S$ of pairs as in $1.$ or partitions as in $2. $, define $GT(S):=\displaystyle\bigcup_{x\in S}GT(x)$. 
\item For two pair of partitions $(\lambda,\mu)$ and $(\lambda',\mu')$, we say that $(\lambda',\mu')\succeq(\lambda,\mu)$  or $(\lambda,\mu)\preceq (\lambda',\mu')$ if there exist $i\in\Z_{\geq 0}$ such that $(\lambda,\mu)\in GT^{\circ i}(\{(\lambda',\mu')\})$, where $GT^{\circ i}(x)=\underbrace{GT(GT(\ldots GT}_{i \mbox{ times}}(x))\ldots)$.  
\end{enumerate}
\end{mydef}
\begin{myprop}
The relation $\succeq$ is a partial order. 
\end{myprop}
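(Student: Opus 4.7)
The plan is to verify the three axioms of a partial order in turn: reflexivity, transitivity, and antisymmetry. Reflexivity is immediate by choosing $i=0$, so that $GT^{\circ 0}(\{(\lambda,\mu)\})=\{(\lambda,\mu)\}$ contains $(\lambda,\mu)$.

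For transitivity, I would first note that $GT$, extended to sets via part (3) of the definition, is monotone: if $S\subseteq T$ then $GT(S)\subseteq GT(T)$, which is immediate from the union definition. A straightforward induction on $j$ then shows $GT^{\circ j}$ is monotone as well, and that $GT^{\circ(i+j)}=GT^{\circ j}\circ GT^{\circ i}$. Consequently, if $(\lambda',\mu')\in GT^{\circ i}(\{(\lambda'',\mu'')\})$ and $(\lambda,\mu)\in GT^{\circ j}(\{(\lambda',\mu')\})$, then
$$(\lambda,\mu)\in GT^{\circ j}(\{(\lambda',\mu')\})\subseteq GT^{\circ j}(GT^{\circ i}(\{(\lambda'',\mu'')\}))=GT^{\circ(i+j)}(\{(\lambda'',\mu'')\}),$$
so $(\lambda'',\mu'')\succeq(\lambda,\mu)$.

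The main substance lies in antisymmetry. The key auxiliary observation is that a single application of $GT$ is weakly componentwise decreasing: from the interlacing bounds $\lambda_{k+1}\leq\lambda_k'\leq\lambda_k$ and $\mu_{k+1}\leq\mu_k'\leq\mu_k$ in part (1) of the definition, any $(\lambda',\mu')\in GT((\lambda,\mu))$ satisfies $\lambda_k'\leq\lambda_k$ and $\mu_k'\leq\mu_k$ for every $k$ (with the convention that components past the listed length are zero). A simple induction on $i$ then shows that $(\nu,\sigma)\in GT^{\circ i}(\{(\lambda,\mu)\})$ implies $\nu_k\leq\lambda_k$ and $\sigma_k\leq\mu_k$ for all $k$. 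Hence $(\lambda',\mu')\succeq(\lambda,\mu)$ forces $\lambda_k\leq\lambda_k'$ and $\mu_k\leq\mu_k'$ for all $k$, while $(\lambda,\mu)\succeq(\lambda',\mu')$ gives the reverse inequalities, so all components coincide and $(\lambda,\mu)=(\lambda',\mu')$.

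I do not anticipate any real obstacle: each axiom reduces to a direct consequence of the componentwise nature of the interlacing conditions defining $GT$. The only step requiring a (completely routine) induction is the pointwise monotonicity of iterated $GT$, which is the crux of the antisymmetry argument.
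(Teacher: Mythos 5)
Your proposal is correct and follows essentially the same route as the paper's proof: reflexivity via $i=0$, transitivity via the composition law $GT^{\circ(i+j)}=GT^{\circ j}\circ GT^{\circ i}$, and antisymmetry from the componentwise monotonicity inherited from the interlacing conditions. You simply spell out the intermediate monotonicity inductions that the paper leaves implicit.
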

\begin{proof} Reflexivity follows from the fact that $(\lambda,\mu)\in T^{\circ 0}(\{(\lambda,\mu)\})$. 

Antisymmetry is a consequence of the fact that if $(\lambda,\mu)\succeq (\lambda',\mu')$ and $(\lambda',\mu')\succeq (\lambda,\mu)$, we get inequalities of type $\lambda_i'\leq\lambda_i, \mu_j'\leq\mu_j, i\in\{1,\ldots,p\}, j\in\{1,\ldots,q\}$ and $\lambda_i\leq \lambda_i', \mu_i\leq\mu_i'$ which obviously implies $(\lambda,\mu)=(\lambda',\mu')$.
 
Lastly, if $x, y, z$ are pairs of partitions such that $x\succeq y$ and $y\succeq z$ then $z\in T^{\circ j}(\{y\})$ and $y\in T^{\circ i}(\{x\})$ for some $i,j\in \Z_{\geq 0}$. But then $z\in T^{\circ (i+j)}(\{x\})$ and hence $x\succeq z$. Thus $\succeq$ is transitive. 
\end{proof}

We denote by $\g_n$ one of the Lie algebras $gl(n, \C), sl(n, \C), o(n, \C)$ or $sp(n, \C)$\footnote[1]{If for instance $\g\simeq gl(\infty, \C)$ then all the $\g_n$ are the subalgebras from the standard exhaustion of $gl(\infty, \C)$}. For each $\g_n$ let $\h_n=\h\cap\g_n, \b_n^+=\g_n\cap\b_+$ and $\b_n^-=\g_n\cap\b_-$, where $\h$, $\b_+$ are the Cartan subalgebra and Borel subalgebra defined in Section $3.2.$ and $\b_-$ is the opposite Borel subalgebra. The subalgebras $\h_n, \b_n^+$ and $\b_n^-$ are Cartan and Borel subalgebras of $\g_n$, see for instance \cite{PStyr}. 

For a partition $\lambda=(\lambda_1, \lambda_2, \ldots, \lambda_n)$ we denote\footnote[2]{For simplicity reasons, we deviate from our standard notation $L(\lambda)$} by $F_n^\lambda$ the simple highest weight $\g_n$-module of highest weight $\lambda$ and by $F_{n+1\downarrow n}^\lambda$, the restriction of $F_{n+1}^\lambda$ to $\g_n$. 

The following statement motivates the partial order defined above. 

\begin{mythm}(Gelfandt-Tsetlin, see \cite{HTW})

Consider the inclusion $\g_n\subset\g_{n+1}$. For a partition $\lambda$ we have  
$$F_{n+1\downarrow n}^\lambda\simeq\displaystyle\bigoplus_{\mu\in GT(\lambda)}F_n^\mu. \mbox{ }\mbox{ }\mbox{ }\mbox{ }\mbox{ }\mbox{(4)}$$ 

\end{mythm}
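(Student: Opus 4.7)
The approach is character-theoretic. For $\g \simeq gl(\infty, \C)$, the character of the finite-dimensional simple module $F_{n+1}^\lambda$ with respect to the Cartan subalgebra $\h_{n+1}$ is the Schur polynomial $s_\lambda(x_1, \ldots, x_{n+1})$, where $x_i = e^{\varepsilon_i}$. The character of the restriction $F_{n+1 \downarrow n}^\lambda$ as a $\g_n$-module is obtained by taking traces with respect to $\h_n \subset \h_{n+1}$, which amounts to specializing $x_{n+1} = 1$. Thus, after invoking Weyl's complete reducibility for $\g_n$ and the fact that finite-dimensional simple $\g_n$-modules are determined up to isomorphism by their $\h_n$-characters, formula (4) reduces to the combinatorial identity
$$
s_\lambda(x_1, \ldots, x_n, 1) = \sum_{\mu \in GT(\lambda)} s_\mu(x_1, \ldots, x_n).
$$

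To establish the identity, I would invoke the semistandard Young tableau formula $s_\lambda(x_1, \ldots, x_{n+1}) = \sum_T x^T$, where $T$ ranges over semistandard Young tableaux of shape $\lambda$ with entries in $\{1, \ldots, n+1\}$. The cells of $T$ containing the maximal entry $n+1$ necessarily form a horizontal strip, as entries strictly increase along columns; deleting them produces a semistandard Young tableau of some shape $\mu \subseteq \lambda$ on $\{1, \ldots, n\}$ with $\lambda / \mu$ a horizontal strip. The horizontal-strip condition is equivalent to the interlacing inequalities $\lambda_{i+1} \leq \mu_i \leq \lambda_i$, which is precisely the defining condition for $\mu \in GT(\lambda)$. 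Partitioning the sum according to $\mu$ yields the refined identity
$$
s_\lambda(x_1, \ldots, x_{n+1}) = \sum_{\mu \in GT(\lambda)} x_{n+1}^{|\lambda| - |\mu|}\, s_\mu(x_1, \ldots, x_n),
$$
in which each $\mu \in GT(\lambda)$ contributes exactly once. Specializing $x_{n+1} = 1$ gives the required character equality, proving the multiplicity-free decomposition. The case $\g \simeq sl(\infty, \C)$ follows immediately, since simple $sl(n,\C)$-modules are obtained from their $gl(n,\C)$ counterparts by restriction and the $\h_n$-weights encode the same data modulo central scalars.

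The principal obstacle lies not in the $gl$ case, where the combinatorics is classical, but in the uniform treatment of the orthogonal and symplectic series. For $\g_n \simeq o(n, \C)$ or $sp(n, \C)$, the branching rules (due to Zhelobenko and Littlewood, and treated uniformly in \cite{HTW}) remain multiplicity-free but involve distinct interlacing-type conditions naturally expressed via Gelfand--Tsetlin patterns of type $B$, $C$ or $D$, sometimes with half-integer entries, rather than the pure $gl$-type interlacing captured by Definition $4.2.3$. A fully rigorous proof therefore requires either a case-by-case verification via the Weyl character and denominator formulas for each classical root system, or an interpretation of $GT(\lambda)$ tailored to the series at hand. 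Since the consequences of Theorem $4.2.5$ used later in the thesis rely only on the qualitative statement that $SC(F_{n+1}^\lambda, \g_n)$ is finite and multiplicity-free, the combinatorial subtleties in the $B$, $C$, $D$ cases do not obstruct the downstream applications.
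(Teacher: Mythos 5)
The paper does not prove Theorem~4.2.5; it simply cites Gelfand--Tsetlin and \cite{HTW}, so there is no in-text argument to compare yours against. Your character-theoretic proof of the $gl(n+1)\supset gl(n)$ branching --- Schur polynomials, the semistandard-tableau expansion, and the observation that cells containing the maximal entry form a horizontal strip, yielding exactly the one-step interlacing inequalities of Definition~4.2.3 --- is correct and is the classical combinatorial argument. The reduction of $sl$ to $gl$ is also sound: restriction of a simple $gl(n)$-module to $sl(n)$ is simple, and the highest-weight data differ only by the central character, so the branching multiplicities agree. Your caveat about the orthogonal and symplectic series is well placed and in fact identifies a real imprecision in the paper: Definition~4.2.3 encodes only $gl$-type one-step interlacing, so Theorem~4.2.5 as written does not apply verbatim to $o(n,\C)$ or $sp(n,\C)$, whose branching rules involve different interlacing conditions (and, for $sp$, half-step intermediaries). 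Consistently with this, the paper only carries out the arguments that use Theorem~4.2.5 in detail for $gl(\infty,\C)$ and $sl(\infty,\C)$ (see the openings of the proofs of Theorems~4.4.1 and 4.4.4). One correction to your closing remark, though: the assertion that the $B$, $C$, $D$ branchings ``remain multiplicity-free'' is wrong in type $C$ --- the restriction $Sp(2n)\downarrow Sp(2n-2)$ has multiplicities equal to the number of partitions doubly interlacing $\lambda$ and $\mu$, which can exceed one; only $Sp(2n)\downarrow Sp(2n-2)\times GL(1)$ is multiplicity-free. Since the proof of Theorem~4.4.4 explicitly invokes multiplicity-freeness of one-step restrictions, this is not an innocuous detail and is presumably part of why the paper restricts its detailed arguments to $gl$ and $sl$.
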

\vspace{0.1cm}

\begin{mylma}

If $\xi$ is a highest weight vector  with respect to $\b_{n+1}^+$ of $F_{n+1}^\lambda$ then $\xi$ is also a highest weight vector  with respect to $\b_{n}^+$ of the simple module $F_n^{\lambda\downarrow n}$ in the restriction of $F_{n+1\downarrow n}^\lambda$, where $\lambda\downarrow n$ is the restriction of $\lambda$ to $\h_n$. 
\end{mylma}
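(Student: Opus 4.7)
The plan is to exploit the fact that being a $\b_{n+1}^+$-highest weight vector is a very restrictive condition which automatically implies being a $\b_n^+$-highest weight vector, and then to use the Gelfand-Tsetlin branching in Theorem $4.2.4.$ together with the uniqueness of singular vectors in finite-dimensional simple modules to identify which summand $\xi$ must sit in.

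First I would observe two easy facts. Since $\xi$ is a weight vector for $\h_{n+1}$ with weight $\lambda$, it is in particular a weight vector for the subalgebra $\h_n\subset\h_{n+1}$, with weight $\lambda\downarrow n$. Since $\n_n^+\subset\n_{n+1}^+$ and $\n_{n+1}^+\xi=0$, we also have $\n_n^+\xi=0$. Thus $\xi$ is what one typically calls a singular vector in $F_{n+1\downarrow n}^\lambda$ of weight $\lambda\downarrow n$.

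Next I would invoke the Gelfand-Tsetlin decomposition
\[
F_{n+1\downarrow n}^\lambda \simeq \bigoplus_{\mu\in GT(\lambda)} F_n^\mu,
\]
noting that the tuple $(\lambda_1,\ldots,\lambda_n)=\lambda\downarrow n$ satisfies the interlacing inequalities $\lambda_{i+1}\leq \lambda_i\leq \lambda_i$, so $\lambda\downarrow n\in GT(\lambda)$ and the summand $F_n^{\lambda\downarrow n}$ indeed appears. Writing $\xi=\sum_{\mu\in GT(\lambda)}\xi_\mu$ with $\xi_\mu\in F_n^\mu$, each component $\xi_\mu$ has $\h_n$-weight $\lambda\downarrow n$ and is annihilated by $\n_n^+$ (because the decomposition is one of $\g_n$-modules, hence of $\h_n$- and $\n_n^+$-modules).

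The key step is then the following standard fact about finite-dimensional simple modules over a semisimple Lie algebra: in $F_n^\mu$, the only nonzero vectors killed by $\n_n^+$ are scalar multiples of the highest weight vector, and in particular they must have weight $\mu$. Consequently $\xi_\mu=0$ whenever $\mu\neq\lambda\downarrow n$, forcing $\xi=\xi_{\lambda\downarrow n}\in F_n^{\lambda\downarrow n}$, and making $\xi$ a $\b_n^+$-highest weight vector of $F_n^{\lambda\downarrow n}$ of weight $\lambda\downarrow n$. The main point requiring a little care (but no real obstacle, since it is classical) is the uniqueness of singular vectors in $F_n^\mu$; this follows from simplicity, because any nonzero $\n_n^+$-invariant vector generates a highest weight submodule, which by simplicity is all of $F_n^\mu$, so the vector is proportional to the highest weight vector and thus of weight $\mu$.
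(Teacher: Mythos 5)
Your proof is correct. The paper itself offers no argument here --- its ``proof'' of this lemma is the single sentence ``The statement is obvious'' --- so there is no authorial route to compare against; your write-up simply supplies the details the author chose to omit. The two observations you isolate ($\h_n\subset\h_{n+1}$ gives the weight $\lambda\downarrow n$, and $\n_n^+\subset\n_{n+1}^+$ gives $\n_n^+\xi=0$) together with the Gelfand--Tsetlin decomposition and the uniqueness of singular vectors in a finite-dimensional simple module are exactly the standard way to see the claim, and your justification of that uniqueness (a nonzero singular weight vector generates a highest weight submodule, which by simplicity is everything, forcing its weight to be the unique maximal weight $\mu$) is sound. The only cosmetic caveat is that with the paper's exotic order $1>2>\cdots>-2>-1$ the coordinate dropped in passing from $\h_{n+1}$ to $\h_n$ sits ``in the middle'' rather than at the end, but the interlacing inequalities defining $GT(\lambda)$ are still satisfied by $\lambda\downarrow n$, so your argument goes through verbatim.
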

\begin{proof} The statement is obvious. 
\end{proof}


\begin{mylma}
If there exists $x\in\g$ such that $x^2\in Ann_{\UU(\g)} V\cap Ann_{\UU(\g)} V_*$, then $x^{p+q+1}\in Ann_{\UU(\g)} V^{\otimes (p,q)}$. 
\end{mylma}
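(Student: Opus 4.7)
The plan is to exploit the Leibniz rule for the $\g$-action on a tensor product, reducing the problem to a simple combinatorial computation with commuting nilpotent operators.

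First I would decompose the action of $x$ on $V^{\otimes(p,q)} = V^{\otimes p} \otimes V_*^{\otimes q}$ using the formula for the module structure recalled in Section 3.1. Writing $x_k$ for the operator on $V^{\otimes(p,q)}$ that acts as $x$ on the $k$-th tensor factor and as the identity on all the others (for $k = 1, \dots, p$ this is the action on a $V$-factor, and for $k = p+1, \dots, p+q$ the action on a $V_*$-factor), the action of $x$ on $V^{\otimes(p,q)}$ is
$$X := x_1 + x_2 + \cdots + x_{p+q}.$$
Two simple observations are then crucial: for $k \neq l$ the operators $x_k$ and $x_l$ commute because they act on distinct tensor factors; and for each $k$ we have $x_k^2 = 0$, since by hypothesis $x^2$ annihilates both $V$ and $V_*$ and $x_k^2$ is just $x^2$ acting on the $k$-th factor.

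Next I would expand $X^{p+q+1}$ by the multinomial theorem, which is legitimate because the $x_k$ pairwise commute:
$$X^{p+q+1} = \sum_{a_1 + \cdots + a_{p+q} = p+q+1} \binom{p+q+1}{a_1, \ldots, a_{p+q}} \, x_1^{a_1} x_2^{a_2} \cdots x_{p+q}^{a_{p+q}}.$$
In every multi-index $(a_1, \dots, a_{p+q})$ with $a_1 + \cdots + a_{p+q} = p+q+1$, the pigeonhole principle forces some $a_k \geq 2$, and then $x_k^{a_k} = 0$ since $x_k^2 = 0$. Hence every summand vanishes, so $X^{p+q+1} = 0$ on $V^{\otimes(p,q)}$, which is exactly the statement that $x^{p+q+1} \in \mathrm{Ann}_{\UU(\g)} V^{\otimes(p,q)}$.

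There is no real obstacle here; the entire argument is the Leibniz decomposition plus a pigeonhole count. The only thing to be a little careful about is the verification that $x_k$ really acts as $x$ on the $k$-th factor alone, which follows directly from the explicit formula for the $gl(\infty,\C)$-action on $V^{\otimes(p,q)}$ given in Section 3.1 and restricts to the subalgebras $sl(\infty,\C)$, $o(\infty,\C)$, $sp(\infty,\C)$ without change.
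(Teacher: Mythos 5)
Your proposal is correct and is essentially the same argument as the paper's: both exploit the Leibniz rule together with the hypothesis $x^2 = 0$ on each factor, and both amount to observing that after $p+q$ applications of $x$ one is forced to hit some factor twice. The paper carries this out by an explicit induction on pure tensors, whereas you package the identical computation more cleanly via commuting nilpotent operators $x_k$ and the multinomial theorem plus pigeonhole.
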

\begin{proof}
First let's show that, for $k\leq p+q$, 
$$x^kv_1\otimes v_2\otimes\ldots\otimes v_p\otimes v_{ *(p+1)}\otimes v_{ *(p+2)}\otimes\ldots\otimes v_{ *(p+q)}=$$
$$=\displaystyle\sum_{1\leq i_1<i_2<\ldots<i_k\leq p+q} v_1\otimes v_2\otimes\ldots xv_{i_r}\otimes\ldots\otimes v_p\otimes v_{ *(p+1)}\otimes v_{ *(p+2)}\otimes\ldots\otimes xv_{ *i_s}\otimes\ldots\otimes v_{ *(p+q)}, $$
where $v_i\in V$ and $v_{ *j}\in V_*$ and the sum is taken over all $k$-tuples $1\leq i_1<\ldots<i_k\leq p+q$ such that the factor $v_{i_j}$ (or $v_{*i_j}$) appears with $g$ in front. This follows by easy induction : base case is just the definition of the $\g$-action on $V^{\otimes (p,q)}$ ; the induction step follows by using the condition of $x^2$ being in the intersection of the annihilators of $V$ and $V_*$. 

We now have 
$$x^{p+q}v_1\otimes v_2\otimes\ldots\otimes v_p\otimes v_{ *(p+1)}\otimes v_{ *(p+2)}\otimes\ldots\otimes v_{ *(p+q)}=$$
$$=xv_1\otimes xv_2\otimes\ldots\otimes xv_p\otimes xv_{ *(p+1)}\otimes xv_{ *(p+2)}\otimes\ldots\otimes xv_{ *(p+q)}. $$
Hence  
$$x^{p+q+1}v_1\otimes v_2\otimes\ldots\otimes v_p\otimes v_{ *(p+1)}\otimes v_{ *(p+2)}\otimes\ldots\otimes v_{ *(p+q)}=$$
$$=\displaystyle\sum_{1\leq i\leq p}xv_1\otimes xv_2\otimes \ldots \otimes x^2v_i\otimes\ldots\otimes x.v_p\otimes gv_{ *(p+1)}\otimes xv_{ *(p+2)}\otimes\ldots\otimes xv_{ *(p+q)}+$$
$$+\displaystyle\sum_{p+1\leq i\leq p+q}xv_1\otimes xv_2\otimes\ldots\otimes xv_p\otimes xv_{ *(p+1)}\otimes xv_{ *(p+2)}\otimes\ldots\otimes x^2v_{*(i)}\otimes\ldots\otimes xv_{ *(p+q)}=$$ 
$$=\displaystyle\sum_{1\leq i\leq p} 0+\displaystyle\sum_{p+1\leq i\leq p+q} 0=0. $$
To complete the argument it suffices to observe that the elements of $V^{\otimes (p,q)}$ are just linear combinations of elements of the form $v_1\otimes v_2\otimes\ldots\otimes v_p\otimes v_{ *(p+1)}\otimes v_{ *(p+2)}\otimes\ldots\otimes v_{ *(p+q)}$. 
\end{proof}
\vspace{0.1cm}
\begin{mylma}
If there exist $x\in\g$ such that $x^k\in Ann_{\UU(\g)} V\cap Ann_{\UU(\g)} V_*$ for some $k\geq 2$, then $x^{(k-1)(p+q)+1} \in Ann_{\UU(\g)} V^{\otimes (p,q)}$. 
\end{mylma}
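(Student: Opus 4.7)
The plan is to generalize the expansion used in Lemma $4.2.7.$ via a multinomial (Leibniz-type) formula for iterated action of $x$ on a pure tensor, and then finish by a pigeonhole argument.

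First I would establish, by induction on $N$, the identity
$$x^N \cdot (v_1 \otimes \cdots \otimes v_{p+q}) = \sum_{\substack{(n_1,\ldots,n_{p+q}) \in \Z_{\geq 0}^{p+q} \\ n_1 + \cdots + n_{p+q} = N}} \binom{N}{n_1, n_2, \ldots, n_{p+q}}\, (x^{n_1} v_1) \otimes \cdots \otimes (x^{n_{p+q}} v_{p+q}),$$
where $v_i \in V$ for $i \leq p$ and $v_i \in V_*$ for $i > p$. The base case $N=1$ is just the definition of the $\g$-action on $V^{\otimes(p,q)}$ (which is the coproduct/Leibniz rule), and the inductive step follows by applying $x$ once more and regrouping using the standard multinomial identity $\binom{N}{n_1,\ldots,n_{p+q}} = \sum_j \binom{N-1}{n_1,\ldots,n_j - 1,\ldots,n_{p+q}}$. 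This is the analogue, for general $N$, of the formula displayed at the start of the proof of Lemma $4.2.7.$

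Next I would apply this with $N = (k-1)(p+q) + 1$. By hypothesis $x^k \cdot v = 0$ for every $v \in V$ and every $v \in V_*$, so the term indexed by $(n_1,\ldots,n_{p+q})$ vanishes whenever some $n_i \geq k$. But if $n_1 + \cdots + n_{p+q} = (k-1)(p+q) + 1$, then by the pigeonhole principle at least one $n_i$ must satisfy $n_i \geq k$, because otherwise $\sum n_i \leq (k-1)(p+q)$. Hence every term in the expansion is zero, so $x^{(k-1)(p+q)+1}$ annihilates every pure tensor in $V^{\otimes(p,q)}$, and by linearity it annihilates all of $V^{\otimes(p,q)}$.

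The only real content is the multinomial formula; the pigeonhole step and the conclusion are immediate. I do not anticipate any serious obstacle, since the induction is exactly parallel to (and slightly more uniform than) the $k=2$ expansion already carried out in Lemma $4.2.7.$ Note also the sanity check: for $k=2$ the exponent $(k-1)(p+q) + 1 = p+q+1$ agrees precisely with the bound of Lemma $4.2.7.$, so the present statement is the natural generalization.
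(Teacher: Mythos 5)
Your proof is correct, and it rests on the same mechanism as the paper's: the action of $x$ on $V^{\otimes(p,q)}$ is a derivation, so iterating it distributes powers of $x$ among the $p+q$ tensor factors, and each factor can absorb at most $k-1$ powers before dying. The difference is purely organizational. The paper proves the lemma by induction on $k$, taking Lemma $4.2.7.$ as the base case and, at each step, arguing (somewhat tersely, via "some easy computations") that at the critical exponent $k(p+q)$ only the diagonal term $x^kv_1\otimes\cdots\otimes x^kv_{*(p+q)}$ survives, after which one more application of $x$ kills everything. You instead prove the full multinomial Leibniz formula for $x^N$ once and for all (by induction on $N$), and then finish with a single pigeonhole count: if $\sum n_i=(k-1)(p+q)+1$ then some $n_i\geq k$, so every summand vanishes. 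Your version is more uniform and self-contained -- it does not need Lemma $4.2.7.$ as a separate base case and makes explicit the combinatorial bookkeeping that the paper leaves implicit -- at the cost of writing out the multinomial coefficients, which play no role beyond being scalars. Both arguments are sound.
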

\begin{proof}
We prove the statement by induction. The base case follows from Lemma $4.2.7.$ Assume the claim to be true for all $1\leq i\leq k$ and let us prove it for $i=k+1$. After some easy computations and using the fact that $x^{(k-1)(p+q)+1}\in Ann_{\UU(\g)}V^{\otimes (p, q)}$, we get  
$$x^{k(p+q)}v_1\otimes v_2\otimes\ldots\otimes v_p\otimes v_{ *(p+1)}\otimes v_{ *(p+2)}\otimes\ldots\otimes v_{ *(p+q)}=$$
$$=x^kv_1\otimes x^kv_2\otimes\ldots\otimes x^kv_p\otimes x^kv_{ *(p+1)}\otimes x^kv_{ *(p+2)}\otimes\ldots\otimes x^kv_{ *(p+q)}. $$
Acting one more time with $g$ annihilates all summands and thus the conclusion follows. 
\end{proof}

\subsection{Non-triviality of annihilators of simple tensor modules}

In this section $\g\simeq gl(\infty, \C), sl(\infty, \C), o(\infty, \C)$ or $sp(\infty, \C)$. 
We will prove the non-triviality of the annihilators of simple tensor modules. We will give two separate arguments. The reason why we do so is because we want to exhibit two types of elements that appear in the annihilator : elements that are "locally central" and elements that are "non-locally central". Here by locally central elements we mean elements that appear in the center of the enveloping algebra of some finite-dimensional semisimple subalgebra of $\g$. 

\vspace{0.2cm}

\vspace{0.1cm}
\begin{myprop}

For any partitions $\lambda$ and $\mu$ with $|\lambda|=p, |\mu|=q$ we have $Ann_{\UU(\g)}V_{\lambda\mu}\neq \{0\}$. 
\end{myprop}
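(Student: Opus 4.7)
The plan is to exhibit two qualitatively different kinds of non-zero elements in $Ann_{\UU(\g)}V_{\lambda\mu}$, matching the author's announced intent of producing both a ``non-locally central'' and a ``locally central'' annihilator, and thereby giving two independent proofs of non-triviality.

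For the non-locally central element I would invoke Lemma $4.2.7$ directly. The input required is a non-zero element $x\in\g$ that is doubly nilpotent on both $V$ and $V_*$, i.e.\ $x^2V=0=x^2V_*$. For $\g\simeq gl(\infty,\C)$ or $sl(\infty,\C)$ any matrix unit $E_{i,j}$ with $i\neq j$ does the job: it has rank one on $V$, rank one on $V_*$, and its image is contained in its kernel in each case. For $\g\simeq o(\infty,\C)$ or $sp(\infty,\C)$, the analogous root vectors (e.g.\ $E_{1,2}-E_{-2,-1}$ in the orthogonal case) have the same property. Lemma $4.2.7$ then gives $x^{p+q+1}\in Ann_{\UU(\g)}V^{\otimes(p,q)}$, and since by Theorem $3.2.1$ the module $V_{\lambda\mu}$ sits inside $V^{\otimes(p,q)}$ as a submodule, we obtain $x^{p+q+1}\in Ann_{\UU(\g)}V_{\lambda\mu}$. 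Non-vanishing of $x^{p+q+1}$ in $\UU(\g)$ is immediate from PBW (Theorem $2.3.1$), and this element cannot lie in any $Z(\g_n)$ since it is a pure power of a root vector, making it non-locally central.

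For the locally central element I would use the standard exhaustion $\g_1\subset\g_2\subset\ldots$ and the center $Z(\g_n)$. The main preparatory step is to show that $SC(V_{\lambda\mu},\g_n)$ is finite for each $n$. This follows because as a $\g_n$-module $V$ decomposes as the natural $\g_n$-module $V_n$ plus a trivial summand (and similarly for $V_*$), so every simple $\g_n$-constituent of $V^{\otimes(p,q)}|_{\g_n}$ is a simple constituent of one of the finitely many finite-dimensional $\g_n$-modules $V_n^{\otimes a}\otimes(V_n^*)^{\otimes b}$ with $a\leq p$, $b\leq q$, of which there are only finitely many by Weyl semisimplicity applied to the reductive algebra $\g_n$. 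The same must therefore hold for the submodule $V_{\lambda\mu}$. Each constituent $X$ has a central character $\chi_X\colon Z(\g_n)\to\C$, and the ideal $I_n:=\bigcap_{X\in SC(V_{\lambda\mu},\g_n)}\ker\chi_X$ is a finite intersection of maximal ideals in the polynomial algebra $Z(\g_n)$. For $n$ large enough so that $\mathrm{rank}\,\g_n\geq 1$, $Z(\g_n)$ has Krull dimension at least one, and $I_n$ is automatically non-zero; any non-zero $z\in I_n$ lies in $\UU(\g_n)\subseteq\UU(\g)$, kills every simple constituent of $V_{\lambda\mu}|_{\g_n}$, and hence kills $V_{\lambda\mu}$ itself.

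The principal technical hurdle is the finiteness of $SC(V_{\lambda\mu},\g_n)$, particularly for the orthogonal and symplectic cases where the branching is slightly more delicate; the non-locally central approach, by contrast, is essentially a one-line application of Lemma $4.2.7$ once a suitable nilpotent root vector has been chosen.
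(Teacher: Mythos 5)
Your proposal follows essentially the same two-pronged strategy as the paper: a nilpotent root vector fed into Lemma~$4.2.7$, plus a locally central element built from a finite set of central characters. Two small remarks. First, you helpfully fill in the finiteness of $SC(V_{\lambda\mu},\g_n)$ by expanding $(V_n\oplus T)^{\otimes p}\otimes(V_n^*\oplus T')^{\otimes q}$, whereas the paper simply cites \cite{PS} for this fact; your argument is correct, though note that when $\g_n=gl(n,\C)$ one is applying complete reducibility to a reductive (not semisimple) algebra, which is fine here since the center acts semisimply on all the relevant modules. Second, there is a minor mislabeling: $E_{1,2}-E_{-2,-1}$ is the symplectic root vector, while the orthogonal choice of the paper is $E_{1,2}-E_{-1,-2}$; both square to zero, so the slip does not affect the argument. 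Your construction of the locally central element via $\bigcap_X\ker\chi_X$ is a harmless variant of the paper's $\prod_i(z-c_i)$ for a single non-scalar $z\in Z(\g_0)$; both rely on $Z(\g_n)$ being a polynomial algebra of positive Krull dimension.
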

\begin{proof}\

\begin{enumerate}

\item Fix $p,q\in\N$. Note that from the construction in Section $3.2.$ it follows that $V_{\lambda\mu}$ is a submodule of $V^{\otimes (p,q)}$. This implies that $Ann_{\UU(\g)}V^{\otimes (p,q)}\subset Ann_{\UU(\g)}V_{\lambda\mu}$ so it suffices to show that $V^{\otimes (p,q)}$ has a non-trivial annihilator. It is then enough to find an element as in Lemma $4.2.7.$ or Lemma $4.2.8.$ The idea is that all the Lie algebras considered above are matrix algebras and because their action on the natural and conatural modules is nothing more than matrix multiplication (up to sign for the conatural module) on column and row vectors, it is enough to find a matrix $x\neq 0$ from one of the subalgebras from the standard exhaustions such that $x^k=0$ as a matrix for some $k\geq 2$. Such a $x$ would suffice since $x^{(k-1)(p+q)+1}\in\UU(\g)$ is non zero as $\UU(\g)$ is integral, so we can apply Lemma $4.2.8.$  
In the case $\g\simeq sl(\infty, \C), gl(\infty, \C)$ choose $x=E_{1,2}$ and $k=2$ ; in the case $\g\simeq o(\infty, \C)$ choose $x=E_{1,2}-E_{-1,-2}$ and $k=2$ ; lastly, in the case $\g\simeq sp(\infty, \C)$ choose $x=E_{1,2}-E_{-2,-1}$ and $k=2$. 

\item Another approach is by restricting $V_{\lambda\mu}$ to any finite-dimensional semisimple subalgebra $\g_0$. In \cite{PS} it is proven that $SC(V_{\lambda\mu},\g_0)$ is finite. Then pick any $z\in Z(\g_0),z\neq 0$ (for instance take the Casimir operator). On each simple $\g_0$-module from the restriction, $z$ acts by a scalar according to Proposition $2.3.8.$ Let $c_1,c_2,\ldots\,c_s$ be the scalars for each of the $s$ non-isomorphic simple $\g_0$-modules from the restriction. Then it is easy to see that $(z-c_1)(z-c_2)\ldots (z-c_s)$ is in the intersection of the annihilators of these $s$ non-isomorphic simple $\g_0$-modules. But then this element is also in the annihilator of $V_{\lambda\mu}$ since $\UU(\g_0)$ is a subalgebra of $\UU(\g)$. 
\end{enumerate}
\end{proof}

\subsection{Simple tensor modules are determined by their annihilators}
\vspace{0.1cm}
As in the last section, $\g\simeq gl(\infty, \C), sl(\infty, \C), o(\infty, \C)$ or $sp(\infty, \C)$.
The first result is a particular case of the main result. However, for reasons that will become obvious later on, we include it here. 

\begin{mythm}
For pairs of partitions $(\lambda,\mu)$ and $(\lambda',\mu')$ we have $Ann_{\UU(\g)}V_{\lambda'\mu'}\subseteq Ann_{\UU(\g)}V_{\lambda\mu}$ if and only if $(\lambda,\mu)\preceq (\lambda',\mu')$. 
\end{mythm}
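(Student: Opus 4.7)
The plan is to reduce the annihilator containment to a statement about simple $\g_n$-constituents in the standard exhaustion of $\g$, and then identify those constituent sets via the Gelfand-Tsetlin branching rule of Theorem $4.2.5.$

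First, I would prove the general claim: for tensor modules $M = V_{\lambda\mu}$ and $N = V_{\lambda'\mu'}$, the containment $Ann_{\UU(\g)} N \subseteq Ann_{\UU(\g)} M$ is equivalent to the inclusion $SC(M, \g_n) \subseteq SC(N, \g_n)$ for every $\g_n$ in the standard exhaustion. One direction is Proposition $4.2.2$, combined with the finiteness of $SC(V_{\lambda\mu}, \g_n)$ recalled in the proof of Proposition $4.3.1.$ For the converse, I would use that $\UU(\g) = \bigcup_n \UU(\g_n)$ as algebras: any $x \in Ann_{\UU(\g)} N$ lies in some $\UU(\g_n)$ and therefore annihilates every simple $\g_n$-constituent of $N$; by hypothesis the set $SC(M, \g_n)$ is contained in $SC(N, \g_n)$, so $x$ annihilates every simple $\g_n$-constituent of $M$, and the integrability of $M$ forces $x \in Ann_{\UU(\g)} M$.

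Second, the heart of the argument is an explicit computation of $SC(V_{\lambda\mu}, \g_n)$. I would establish that, for $n$ sufficiently large relative to $|\lambda| + |\mu|$,
\[
SC(V_{\lambda\mu}, \g_n) \;=\; \bigl\{\, F_n^\nu : \nu \in GT^{\circ k}\bigl(\{(\lambda,\mu)\}\bigr) \text{ for some } k \geq 0 \,\bigr\},
\]
where a pair of partitions is read off as a $\g_n$-dominant weight in the natural way. The ingredients are: the decomposition $V|_{\g_n} = V_n \oplus (\textrm{trivials})$ and its analogue for $V_*$; the explicit realization of $V_{\lambda\mu}$ as a Young-symmetrized subquotient of $V^{\otimes(p,q)}$ from Section $3.2$; and an inductive use of the classical branching from Theorem $4.2.5$, propagating the restriction from $\g_k \supset \g_n$ down to $\g_n$ as $k \to \infty$. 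For $o(\infty, \C)$ and $sp(\infty, \C)$ one substitutes the corresponding classical branching rules, which are again governed by the interlacing conditions in Definition $4.2.3.$ Given this identification, the equivalence of the theorem follows: for the ``if'' direction, $(\lambda,\mu) \preceq (\lambda',\mu')$ implies $GT^{\circ *}(\{(\lambda,\mu)\}) \subseteq GT^{\circ *}(\{(\lambda',\mu')\})$ by transitivity of GT-reachability, hence $SC(V_{\lambda\mu}, \g_n) \subseteq SC(V_{\lambda'\mu'}, \g_n)$ for every $n$, and Step 1 yields the annihilator containment; for the ``only if'' direction, Step 1 gives the reverse $SC$-containment, and for $n$ large enough the pair $(\lambda,\mu)$ itself labels the simple $\g_n$-submodule generated by a highest weight vector (cf. Lemma $4.2.6$), so $(\lambda,\mu) \in SC(V_{\lambda'\mu'}, \g_n)$, which by the description above forces $(\lambda,\mu) \in GT^{\circ k}(\{(\lambda',\mu')\})$ for some $k$, i.e. $(\lambda,\mu) \preceq (\lambda',\mu')$.

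The main obstacle is Step 2: the rigorous combinatorial identification of $SC(V_{\lambda\mu}, \g_n)$ with the set of GT-reachable pairs. One must carefully handle the restriction of weights from $\h^*$ to $\h_n^*$, account for the trivial summands in $V|_{\g_n}$ and $V_*|_{\g_n}$, control the interaction between the Young projector used to cut out $V_{\lambda\mu}$ from $V^{\otimes(p,q)}$ and the branching of tensor powers, and treat the four families $gl$, $sl$, $o$, $sp$ uniformly (using in particular $V \simeq V_*$ in the orthogonal and symplectic cases).
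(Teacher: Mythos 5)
Your overall strategy coincides with the paper's: reduce the annihilator containment to containment of the sets $SC(\cdot,\g_n)$ of simple $\g_n$-constituents (this is your Step 1, which matches Lemma $4.2.1$/Proposition $4.2.2$ and the direct-limit argument the paper also uses), and then analyze those sets through the Gelfand--Tsetlin branching of Theorem $4.2.5$. Step 1 is correct as you've written it. However, your Step 2 --- the ``identification'' $SC(V_{\lambda\mu},\g_n) = \{F_n^\nu : \nu\in GT^{\circ k}(\{(\lambda,\mu)\})\}$ --- is exactly where all of the paper's work lives, and you state it rather than prove it. More importantly, the list of obstacles you give at the end misses the specific difficulty that the paper's proof actually has to overcome.

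The concrete gap is the $sl(\infty,\C)$ case. For $\g_n = sl(n,\C)$, a simple finite-dimensional module does not have a well-defined highest weight as a tuple $(\nu_1,\ldots,\nu_n)$; it only determines the consecutive differences $\nu_i-\nu_{i+1}$, equivalently the tuple modulo adding $c(1,\ldots,1)$. So the ``natural way'' of reading a pair of partitions off from an element of $SC(V_{\lambda\mu},\g_n)$ is not automatic: you cannot simply embed $(\lambda',\mu')$ as a weight with zeros in the middle and conclude it is the unique representative. The paper resolves this by comparing the sequences of consecutive differences coming from the restriction of $F_{i+r+s}^{(\lambda',\mu')_i}$ down to $\g_{k+p+q}$ for $i,k$ large, and observing that both sides have a unique maximal block of consecutive zero differences (of length at least some fixed $R>p+q+r+s$), which pins down the alignment and lets one recover the pair of partitions and the number $j$ of restriction steps, yielding $(\lambda,\mu)\in GT^{\circ j}(\{(\lambda',\mu')\})$. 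Without an argument of this kind your Step 2 is unproven in the $sl$ case, and consequently the ``only if'' direction of the theorem does not follow. For $gl(\infty,\C)$ (and, after suitable modifications, for $o$ and $sp$) your sketch is essentially adequate since each restriction step is multiplicity-free and the weight labeling is unambiguous; it is the passage to $sl$ that requires the extra idea.
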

\begin{proof} We argue for $\g\simeq gl(\infty, \C)$ and $sl(\infty, \C)$. We omit the other cases since they are very similar to these two cases.  

Notice that if $(\lambda,\mu)\preceq (\lambda',\mu')$ then, using Theorem $4.2.5.$ iteratively, we get $SC(V_{\lambda\mu}, \g_n)\subseteq SC(V_{\lambda'\mu'}, \g_n), \forall n\in\Z_{\geq 2}$. This implies $Ann_{\UU(\g_n)}V_{\lambda'\mu'}\subseteq Ann_{\UU(\g_n)}V_{\lambda\mu}, \forall n\in\Z_{\geq 2}$. Taking union over all $n\geq 2$ yields $Ann_{\UU(\g)}V_{\lambda'\mu'}\subseteq Ann_{\UU(\g)}V_{\lambda\mu}$.  

Now, assume $Ann_{\UU(\g)}V_{\lambda'\mu'}\subseteq Ann_{\UU(\g)}V_{\lambda\mu}$. This implies that $SC(V_{\lambda\mu}, \g_n)\subseteq SC(V_{\lambda'\mu'}, \g_n), \forall n\in\Z_{\geq 2}$. Set
$$(\lambda,\mu)=(\lambda_1\geq\lambda_2\geq\ldots\geq\lambda_p,-\mu_q\geq\mu_{q-1}\geq\ldots\geq\mu_1)$$
$$(\lambda',\mu')=(\lambda_1'\geq\lambda_2'\geq\ldots\geq\lambda_r,-\mu_s'\geq\mu_{s-1}'\geq\ldots\geq\mu_1')$$
and denote by $(\lambda,\mu)_k$ and $(\lambda',\mu')_k$, the corresponding pairs of partitions but with $k$ zeros in the middle. Here, we need to point out that $(\lambda,\mu)_k$ and $(\lambda',\mu')_k$ are not pair of partitions anymore but rather decreasing sequences and highest weights of finite-dimensional modules. Following the construction of the simple tensor modules in Section $3.2.$, we have $V_{\lambda\mu}=\displaystyle\lim_{\longrightarrow}F_{n+p+q}^{(\lambda\mu)_n}$ and $V_{\lambda'\mu'}=\displaystyle\lim_{\longrightarrow}F_{n+r+s}^{(\lambda'\mu')_n}$, see for instance \cite{PStyr}. 

Taking all of the above into account, we see that for every $k\in\Z_{\geq 2}$, there exists $i\in\Z_{\geq 2}$ such that $F_{k+p+q}^{(\lambda,\mu)_k}$ is a simple direct summand of the module obtained from $F_{i+r+s}^{(\lambda',\mu')_i}$ via a number of restrictions of type $\g_{j-1}\subset\g_j$. The isomorphism class of these finite-dimensional modules is given by the sequences $(\lambda,\mu)_i$ in the case of $gl(i+p+q)$. 

In this case, we immediately obtain that $p\leq r, q\leq s$ and $\lambda_k\leq\lambda_k', k\in\{1,\ldots, p\}$, $\mu_k\leq\mu_k', k\in\{1,\ldots, q\}$. More precisely, for $i, k$ large enough, if we set $j=i+r+s-k-p-q$ to be the number of restrictions necessary to get from $F_{i+r+s}^{(\lambda',\mu')_i}$ to $F_{k+p+q}^{(\lambda,\mu)_k}$, we get $(\lambda,\mu)\in GT^{\circ j}(\{(\lambda',\mu')\})$. Thus $(\lambda,\mu)\preceq (\lambda',\mu')$.  

In the later case of $sl(i+p+q, \C)$, the isomorphism class depends on the pairwise differences $\lambda_l-\lambda_{l+1}, \mu_l-\mu_{l+1}$. Given this, notice that if we choose $k$ to be large enough then the pairwise differences for $F_{k+p+q}^{(\lambda,\mu)_k}$ are $\lambda_l-\lambda_{l+1}, l\in\{1,\ldots, p\}, \lambda_{p+1}=0$, followed by $k-1$ zeros and lastly, $\mu_l-\mu_{l+1}, l\in\{1,\ldots,q\}, \mu_{q+1}=0$. These differences have to appear, using a previous argument, in the same order in the restriction of some $F_{i+r+s}^{(\lambda',\mu')_i}$ with $i$ large enough.

Pick some $R\in\Z_{>p+q+r+s}$. Let $d_1, d_2,\ldots$ be the differences appearing in this order in $(\lambda,\mu)_k$ and $d_1', d_2', \ldots$ be the differences appearing in this order in $(\lambda',\mu')_i$. After successive restrictions of $(\lambda',\mu')_i$, notice that the nonzero consecutive differences of the pair of partitions obtained, must lie among the first $r$ such differences and the last $s$, so its number of nonzero consecutive differences does not exceed $r+s$. In the sequence $d_1,d_2,\ldots$ we have at most $p+q$ nonzero differences and a subsequence of at least $k-1$ zeros which, for $k$ large enough, is the only subsequence of consecutive differences with at least $R$ consecutive zeros. After restricting $(\lambda',\mu')_i$ to $g_{k+p+q}$, we get at most $r+s$ nonzero differences and a subsequence of at least $k+p+q-r-s$ consecutive zeros, which is the only subsequence of consecutive differences with at least $R$ consecutive zeros. 

Because of the uniqueness of the subsequences of consecutive zeros of length at least $R$, we get that, for $k$ and $i$ large enough, these subsequences of consecutive zeros must coincide. But then, taking into account that we can recover the sequence $(\lambda,\mu)_k$ from the sequence $d_1, d_2,\ldots$ and the position of the subsequence of zeros in the middle, we obtain $(\lambda,\mu)\in GT^{\circ i+r+s-k-p-q}(\{(\lambda',\mu')\})$ and thus $(\lambda,\mu)\preceq (\lambda',\mu')$. 


\end{proof}

\begin{mycor}
$Ann_{\UU(\g)} V_{\lambda\mu}=Ann_{\UU(\g)} V_{\lambda'\mu'}$ if and only if $(\lambda,\mu)=(\lambda',\mu')$. 
\end{mycor}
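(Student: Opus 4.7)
The plan is to derive this corollary directly from the preceding Theorem $4.4.1$, using only the fact that $\succeq$ is a partial order on pairs of partitions (Proposition $4.2.4$). No additional machinery should be needed, since Theorem $4.4.1$ already characterizes inclusions of annihilators in terms of the order $\preceq$.

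First I would handle the trivial direction: if $(\lambda,\mu)=(\lambda',\mu')$ then the modules $V_{\lambda\mu}$ and $V_{\lambda'\mu'}$ are literally the same, so their annihilators coincide. For the nontrivial direction, assume $Ann_{\UU(\g)}V_{\lambda\mu}=Ann_{\UU(\g)}V_{\lambda'\mu'}$. Then in particular both inclusions
\[
Ann_{\UU(\g)}V_{\lambda'\mu'}\subseteq Ann_{\UU(\g)}V_{\lambda\mu}\quad\text{and}\quad Ann_{\UU(\g)}V_{\lambda\mu}\subseteq Ann_{\UU(\g)}V_{\lambda'\mu'}
\]
hold. Applying Theorem $4.4.1$ to each of these two inclusions yields $(\lambda,\mu)\preceq(\lambda',\mu')$ and $(\lambda',\mu')\preceq(\lambda,\mu)$ respectively.

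Finally I would invoke the antisymmetry of the partial order $\succeq$ established in Proposition $4.2.4$: two pairs of partitions comparable in both directions must coincide. This forces $(\lambda,\mu)=(\lambda',\mu')$, completing the proof. There is no serious obstacle here — the entire content of the corollary was already packaged into Theorem $4.4.1$ together with the order-theoretic lemma, so the argument reduces to combining these two statements.
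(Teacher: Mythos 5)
Your argument is correct and is essentially identical to the paper's own proof: both directions of the double inclusion are fed into Theorem $4.4.1$ to obtain $(\lambda,\mu)\preceq(\lambda',\mu')$ and $(\lambda',\mu')\preceq(\lambda,\mu)$, and antisymmetry of the partial order from Proposition $4.2.4$ finishes the argument. No gaps.
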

\begin{proof}
Notice that Theorem $4.4.1.$ does not mention the connection between the strictness of the inclusion $Ann_{\UU(\g)}V_{\lambda'\mu'}\subseteq Ann_{\UU(\g)}V_{\lambda\mu}$ and the strictness of the inequality $(\lambda,\mu)\preceq (\lambda',\mu')$. However, if we consider the double inclusion $Ann_{\UU(\g)}V_{\lambda'\mu'}\subseteq Ann_{\UU(\g)}V_{\lambda\mu}$ and $Ann_{\UU(\g)}V_{\lambda\mu}\subseteq Ann_{\UU(\g)}V_{\lambda'\mu'}$, then we obtain the conclusion as a consequence of the fact that $\preceq$ is a partial order. 
\end{proof}
\vspace{0.1cm}
\begin{myex}
Denote by $V_n$ the natural $\g_n$-module, and by $\C_n$ the trivial one-dimensional $\g_n$-module. For a $\g$-module $M$, denote by $M_{| n}$ the restriction of $M$ to $\g_n$. We have  
$$S^2V_{|n}=S^2V_n\oplus (M_1\otimes V_n) \oplus (M_2\otimes \C_n),$$
$$V_{|n}=V_n\oplus (m_3\otimes \C_n), $$
where $M_1, M_2. M_3$ are countable-dimensional trivial $\g_n$-modules. This implies $SC(V, \g_n)\subset SC(S^2V, \g_n), \forall n\in\Z_{\geq 2}$ and thus $Ann_{\UU(\g)}S^2V\subset Ann_{\UU(\g)}V$.  This inclusion corresponds to the inequality $(2,0)\succeq (1,0)$. Similarly we get the chain of inclusions 
$$\ldots\subset Ann_{\UU(\g)}S^3V\subset Ann_{\UU(\g)}S^2V\subset Ann_{\UU(\g)}V$$
which corresponds to the chain of inequalities
$$\ldots\succeq (3,0)\succeq (2,0)\succeq (1,0). $$
\end{myex}
\vspace{0.1cm}
\begin{mythm}
Let $M$ be a simple $\g$-module. For a pair of partitions $(\lambda,\mu)$, we have $Ann_{\UU(\g)}V_{\lambda,\mu}\subseteq Ann_{\UU(\g)}M$ if and only if $M\simeq V_{\lambda',\mu'}$ for some pair of partitions $(\lambda',\mu')$ and $(\lambda',\mu')\preceq (\lambda,\mu)$.  
\end{mythm}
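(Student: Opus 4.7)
The reverse implication is essentially a restatement of Theorem $4.4.1$: if $M\simeq V_{\lambda'\mu'}$ for some $(\lambda',\mu')\preceq(\lambda,\mu)$, then $Ann_{\UU(\g)}M=Ann_{\UU(\g)}V_{\lambda'\mu'}\supseteq Ann_{\UU(\g)}V_{\lambda\mu}$. The real content of the theorem is the forward implication: starting only from $Ann_{\UU(\g)}V_{\lambda\mu}\subseteq Ann_{\UU(\g)}M$ with $M$ simple, one must conclude that $M$ is itself a simple tensor module. The first step is to invoke Proposition $4.2.2$. The module $V_{\lambda\mu}$ is integrable and $SC(V_{\lambda\mu},\g_0)$ is finite for every finite-dimensional semisimple subalgebra $\g_0\subset\g$ (as used in the proof of Proposition $4.3.1$ and in \cite{PS}), so Proposition $4.2.2$ forces $M$ to be integrable and to satisfy $SC(M,\g_n)\subseteq SC(V_{\lambda\mu},\g_n)$ for every $n$.

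With this boundedness in place, the plan is to produce a $\b_+$-highest weight vector in $M$. By Lemma $4.2.6$, restriction of any $\b_{n+1}^+$-highest weight vector to $\g_n$ gives a $\b_n^+$-highest weight vector, inducing maps $SC(M,\g_{n+1})\to SC(M,\g_n)$, $\nu\mapsto\nu\downarrow n$. Since each $SC(M,\g_n)$ is finite, a K\"onig-style compactness argument on this inverse system produces a coherent family $(\nu^{(n)})$ with $\nu^{(n+1)}\downarrow n=\nu^{(n)}$, assembling into a single weight $\chi\in\h^*$ given by $\chi|_{\h_n}=\nu^{(n)}$. The constraint $\nu^{(n)}\in SC(V_{\lambda\mu},\g_n)$, combined with the explicit description of the latter set via iterated Gelfand-Tsetlin branching from $(\lambda,\mu)_k$, forces $\chi$ to have the form $\sum_{i\in\Z_{>0}}\lambda_i'\varepsilon_i-\sum_{i\in\Z_{>0}}\mu_i'\varepsilon_{-i}$ for some partitions $(\lambda',\mu')$, which is precisely the highest weight of $V_{\lambda'\mu'}$ by Theorem $3.2.1$.

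The main obstacle is upgrading this coherent family of weights to a genuine $\b_+$-highest weight vector. The candidate spaces are the $\b_n^+$-highest weight lines of type $\nu^{(n)}$, which correspond to $Hom_{\g_n}(F_n^{\nu^{(n)}},M)$ and form a nested sequence via the inclusions induced by Lemma $4.2.6$. The delicate point is establishing that this nested sequence stabilizes at a nonzero limit: this in turn requires bounding the multiplicities $\dim Hom_{\g_n}(F_n^\nu,M)$ by the corresponding multiplicities in $V_{\lambda\mu}$, via a refinement of Proposition $4.2.2$ that tracks multiplicities rather than merely isomorphism classes of simple constituents. Once such a multiplicity bound is in hand, the spaces are finite-dimensional and a standard descending-chain argument yields a nonzero vector $\xi$ in the limit; by construction $\xi$ is annihilated by every $\n_n^+$, hence by $\n_+$, and has weight $\chi$. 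Since $M$ is simple, $M=\UU(\g)\xi$ is then a simple highest weight module of weight $\chi$, and the uniqueness of simple highest weight modules (a straightforward extension of the finite-dimensional Verma construction to our setting) gives $M\simeq V_{\lambda'\mu'}$. A final application of Theorem $4.4.1$ to the inclusion $Ann_{\UU(\g)}V_{\lambda\mu}\subseteq Ann_{\UU(\g)}V_{\lambda'\mu'}$ confirms $(\lambda',\mu')\preceq(\lambda,\mu)$ and closes the argument.
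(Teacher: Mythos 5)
Your setup (reverse implication via Theorem $4.4.1$; forward implication starting from Proposition $4.2.2$ to get integrability and $SC(M,\g_n)\subseteq SC(V_{\lambda\mu},\g_n)$) matches the paper, and you correctly flag the delicate point: converting control over isomorphism classes of constituents into an actual $\n_+$-singular vector. But the fix you propose fails. The multiplicity spaces $Hom_{\g_n}(F_n^{\nu},M)$ are not finite-dimensional in general, and no refinement of Proposition $4.2.2$ can make them so, because they are already infinite-dimensional for $V_{\lambda\mu}$ itself: Example $4.4.3$ records $V_{|n}=V_n\oplus(M_3\otimes\C_n)$ with $M_3$ countable-dimensional, i.e.\ the trivial constituent occurs with infinite multiplicity. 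A descending chain of nonzero infinite-dimensional subspaces can have zero intersection, and this actually happens here: take $M=V$ and the coherent family $\nu^{(n)}=0$ (which the K\"onig/inverse-limit argument is entitled to hand you, since it gives no control over which branch is selected); the $n$-th highest-weight space is $\mathrm{span}\{x_j: j\notin\JJ_n\}$, whose intersection over all $n$ is $\{0\}$. So your limit vector $\xi$ need not exist, and the coherent weight $\chi$ need not even be the highest weight of a submodule of $M$.

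The paper avoids intersecting subspaces altogether. It fixes an actual highest weight vector $v_1$ of a constituent at level $m=2(p+q)+1$ and asks whether $v_1$ remains a highest weight vector of the same zero-padded type at each higher level; if this fails at level $j+1$, the constituent containing $v_1$ lies in the restriction of a constituent of strictly larger type $(\lambda^2,\mu^2)\succ(\lambda^1,\mu^1)$, and one restarts with a highest weight vector of that constituent. Since only finitely many pairs are $\preceq(\lambda,\mu)$, this strictly increasing chain terminates, yielding $v_s$ with $\n_+v_s=0$ generating a copy of $V_{\lambda^s\mu^s}$; simplicity of $M$ finishes the argument. (The only multiplicity statement needed is multiplicity one of the relevant constituent in the finite-dimensional branching $F_{j+1}\downarrow\g_j$ for $j>2(p+q)$, not any bound on multiplicities in $M$.) To salvage your approach you would have to choose the coherent family maximally with respect to $\preceq$ and prove stabilization of the associated vectors, which is essentially the paper's argument in different clothing.
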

\begin{proof} As before, we argue only for $gl(\infty, \C)$ and $sl(\infty, \C)$. In the proof, when we refer to a highest weight vector of a $\g_n$-module, we refer to a highest weight vector with respect to the Borel subalgebra $\b_n^+\subset\g_n$ defined in Section $4.2.$ 

In light of Theorem $4.4.1.$ one implication is obvious. 

Now, assume $Ann_{\UU(\g)}V_{\lambda,\mu}\subseteq Ann_{\UU(\g)}M$. Proposition $4.2.2.$ tells us that $M$ is integrable and moreover, $SC(M,\g_0)\subseteq SC(V_{\lambda,\mu}, \g_0)$ for any finite-dimensional semisimple subalgebra of $\g$. In particular, we get  
$$SC(M, \g_n)\subseteq SC(V_{\lambda,\mu}, \g_n), \mbox{ } \forall n\in\Z_{\geq 2}.\mbox{ }\mbox{ }\mbox{ }\mbox{ }\mbox{ }\mbox{ }\mbox{ }(4)$$
Pick a partition $(\lambda^1,\mu^1)=(\lambda_1^1\geq\lambda_2^1\geq\ldots\geq\lambda_r^1,-\mu_s^1\geq\ldots\geq -\mu_2^1\geq -\mu_1^1)$ such that $(\lambda^1,\mu^1)_k$ is the highest weight of one of the elements of $SC(M,\g_{2(p+q)+1})$, i.e. $k+r+s=2(p+q)+1$. Because of $(4)$. $(\lambda^1,\mu^1)_k$ comes from successive restrictions of some $F_{i+p+q}^{(\lambda,\mu)_i}$ with $i$ arbitrarily large. Note that after restricting $F_{i+p+q}^{(\lambda,\mu)_i}$ to $\g_{2(p+q)+1}$ we obtain partitions whose sequence of consecutive differences contains a subsequence of at least $p+q$ consecutive zeros and this is the only such subsequence of length at least $p+q$. We obtain that $r\leq p$, $s\leq q$ and $k\geq p+q+1$, and moreover that $(\lambda^1,\mu^1)\preceq (\lambda,\mu)$  

For any $j\in \Z_{\geq 2}$ the elements of $SC(M, \g_j)$ come from the elements of $SC(M, \g_{j+1})$ by restricting to $\g_j$. By Lemma $4.2.6.$ if $v$ is a highest weight vector of some $\g_{j+1}$-module $X$ isomorphic $F_{j+1}^{(\lambda^1,\mu^1)_{j+1-r-s}} \in SC(M, \g_{j+1})$ then it is also a highest weight vector for some $\g_j$-module $Y$ appearing in $X_{|\g_j}$, isomorphic to $F_{j}^{(\lambda^1,\mu^1)_{j-r-s}} \in SC(M, \g_{j})$. Using Theorem $4.2.5.$ $j>2(p+q)$, the multiplicity of $Y$ in $X_{|\g_{j}}$ is $1$. Indeed, in the case $\g\simeq gl(\infty, \C)$ all restrictions $\g_{n+1}\downarrow\g_n$ are multiplicity free. In the case $\g\simeq sl(\infty, \C)$, suppose for the sake of contradiction that $Y$ has multiplicity at least $2$ in $X_{|\g_j}$. Theorem $4.2.5.$ tells us that it is then possible to choose two different pairs of partitions 
$$x_1\geq x_2\geq\ldots\geq x_r\geq 0, \ldots, 0\geq -y_s\geq \ldots\geq -y_2\geq -y_1$$
$$x_1'\geq x_2'\geq\ldots\geq x_r'\geq 0, \ldots, 0\geq -y_s'\geq \ldots\geq -y_2'\geq -y_1'$$
that yield the same sequence of consecutive differences. Let these differences be $d_1,d_2,\ldots,d_{j-1}$. Since $j>2(p+q)$, we get  
$$x_1=d_1+d_2+\ldots+d_{r+1}=x_1'$$
$$x_2=d_2+d_3+\ldots+d_{r+1}=x_2'$$
$$\ldots$$
$$y_1=d_{j-1-s}+d_{j-s}+\ldots+d_{j-1}=y_1'$$
$$y_2=d_{j-1-s}+d_{j-s}+\ldots+d_{j-2}=y_2'$$
$$\ldots$$
and hence the pairs of partitions are equal, contradiction. Therefore $Y$ has multiplicity $1$ in $X_{|\g_j}$.

This discussion implies that, for a highest vector $v$ of a $\g_j$-module $X, j\in \Z_{\geq 2(p+q)+1}$ appearing in $M_{|j}$, isomorphic to $F_{j}^{(\lambda^1,\mu^1)_{j-r-s}}$, we have the following two possibilities : 
\begin{enumerate}
\item $v$ is also the highest weight vector of a $\g_{j+1}$-module $Y$ appearing in $M_{|j+1}$, isomorphic to \newline $F_{j+1}^{(\lambda^1,\mu^1)_{j+1-r-s}}$. 
\item $X$ comes from the restriction to $\g_{j}$ of a $\g_{j+1}$-module $Y$ (not necessarily unique) appearing in $M_{|j+1}$, non-isomorphic to $F_{j+1}^{(\lambda^1,\mu^1)_{j+1-r-s}}$. 
\end{enumerate}
Set $m=2(p+q)+1$. Choose a highest vector $v_1$ of a $\g_m$-module appearing in $M_{|m}$, isomorphic to $F_{m}^{(\lambda^1,\mu^1)_{m-r-s}}$. Apply the following procedure. Let $j$ be the highest index for which case $1$ applies to $v_1$. This implies $F_{j}^{(\lambda^1,\mu^1)_{j-r-s}}$ is in case $2$. Let $(\lambda_1^2\geq\lambda_2^2\geq\ldots\geq\lambda_a^2>0,\ldots,0>-\mu_b^2\geq\ldots\geq -\mu_2^2\geq -\mu_1^2)=(\lambda^2,\mu^2)_{j+1-a-b}$ be the highest weight of the simple $\g_{j+1}$-module $Y$ from case $2$. As before, we get $a\leq p$, $b\leq q$, $j+1-a-b\geq m-p-q=p+q+1$.  But notice that, by reproducing an argument given in the proof of Theorem $4.4.1.$, we easily get $(\lambda^1,\mu^1)\preceq (\lambda^2,\mu^2)$. Moreover, because $F_{j+1}^{(\lambda^2,\mu^2)_{j+1-a-b}}$ must come by successive restrictions from $F_{i+p+q}^{(\lambda,\mu)_i}$ with $i$ arbitrarily large, we get $(\lambda^2,\mu^2)\preceq (\lambda,\mu)$. Taking into account that $F_{j+1}^{(\lambda^2,\mu^2)_{j+1-a-b}}$ is not isomorphic to $F_{j+1}^{(\lambda^1,\mu^1)_{j+1-r-s}}$, we get $(\lambda^1,\mu^1)\prec (\lambda^2,\mu^2)\preceq (\lambda\mu)$. 



Now we pick $v_2$ to be the highest weight vector of the $g_{j+1}$-module $Y$ and we continue the procedure inductively. We obtain a chain of strict inequalities $(\lambda^1,\mu^1)\prec (\lambda^2,\mu^2)\prec\ldots\prec (\lambda^s,\mu^s)\preceq (\lambda,\mu), s\in\Z_{\geq 2}$ of pair of partitions. Notice that any consecutive difference that appears in some pair of partitions in $GT^{\circ i}(\{(\lambda,\mu)\})$ is bounded by $0$ and $\lambda_1+\mu_1$ and there are at most $p+q$ nonzero such differences. This implies that there are at most $(\lambda_1+\mu_1)^{p+q}$ different pairs of partitions $(\lambda',\mu')$ with $(\lambda',\mu')\preceq (\lambda,\mu)$. Given this and the chain of inequalities of pairs of partitions that we previously obtained, it follows that from some point on, the procedure will only be in case $1$. 

Suppose $v_s$ is the last highest vector chosen after the procedure stabilizes in case $1$.  Since $v_s$ is a highest weight vector with respect to all $\b_n, n\in\Z_{\geq m}$, we get $\n_+ v_s=\{0\}$. Moreover, Lemma $4.2.6.$ and $V_{\lambda^s\mu^s}=\displaystyle\lim_{\longrightarrow}F_{n+p+q}^{(\lambda^s,\mu^s)_n}$, imply that $v_s$ generates a submodule of $M$ isomorphic to $V_{\lambda^s\mu^s}$. Given the simplicity of $M$, we obtain that $M\simeq V_{\lambda^s\mu^s}$ and by Theorem $4.4.1.$ the conclusion follows. Thus we are done. 
\end{proof}
\vspace{0.1cm}
\begin{mycor}
If $M$ is a simple $\g$-module (not necessarily integrable) such that $Ann_{\UU(\g)}M=Ann_{\UU(\g)}V_{\lambda\mu}$ for some pair of partitions $(\lambda,\mu)$ then $M\simeq V_{\lambda\mu}$. 
\end{mycor}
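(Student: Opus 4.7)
The plan is to derive this corollary as an immediate consequence of Theorem $4.4.4.$ and Corollary $4.4.2.$, with essentially no additional technical work. The whole difficulty of identifying simple modules with tensor modules via annihilators has already been absorbed into Theorem $4.4.4.$, so here one only needs to chain two applications of that machinery.

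First I would use the hypothesis $Ann_{\UU(\g)}M = Ann_{\UU(\g)}V_{\lambda\mu}$ to extract the inclusion $Ann_{\UU(\g)}V_{\lambda\mu}\subseteq Ann_{\UU(\g)}M$. Applying Theorem $4.4.4.$ (which is the main theorem of the section and does the heavy lifting via Proposition $4.2.2.$, the Gelfand--Tsetlin branching and the stabilization argument), I conclude that $M$ must already be isomorphic to a simple tensor module: $M\simeq V_{\lambda'\mu'}$ for some pair of partitions $(\lambda',\mu')$ with $(\lambda',\mu')\preceq (\lambda,\mu)$.

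Second, substituting this isomorphism back into the hypothesis yields $Ann_{\UU(\g)}V_{\lambda'\mu'} = Ann_{\UU(\g)}V_{\lambda\mu}$. At this point Corollary $4.4.2.$ applies directly: equality of annihilators among simple tensor modules forces equality of the indexing pairs, so $(\lambda',\mu')=(\lambda,\mu)$ and therefore $M\simeq V_{\lambda\mu}$, as required.

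There is no serious obstacle here since the content is entirely packaged in Theorem $4.4.4.$ and the antisymmetry of the partial order $\preceq$ that underlies Corollary $4.4.2.$; the corollary is essentially the observation that dropping the integrability assumption on $M$ costs nothing because integrability of $M$ is recovered for free from the inclusion of annihilators via Proposition $4.2.2.$, which is exactly what makes Theorem $4.4.4.$ applicable to an arbitrary simple $\g$-module.
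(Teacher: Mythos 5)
Your proof is correct and follows the intended line of reasoning: since the paper gives no explicit argument for this corollary, it is presumably meant to follow exactly as you describe, namely by applying Theorem $4.4.4.$ to the inclusion $Ann_{\UU(\g)}V_{\lambda\mu}\subseteq Ann_{\UU(\g)}M$ and then invoking Corollary $4.4.2.$ (equivalently, antisymmetry of $\preceq$) to pin down $(\lambda',\mu')=(\lambda,\mu)$. Your closing remark correctly identifies the point of stating the corollary without an integrability hypothesis: Proposition $4.2.2.$, already baked into Theorem $4.4.4.$, forces $M$ to be integrable.
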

\vspace{0.1cm}

\vspace{2cm}
I would like to thank my advisor Professor Ivan Penkov for all the help, support and numerous advices. This text would not have taken form without him. I would also like to thank my colleague Alexei Pethukov for pointing me to relevant references. 

\vspace{2cm}
Alexandru Sava,

School of Engineering and Science, Department of Mathematics,

Jacobs University Bremen. 

a.sava@jacobs-university.de
\newpage

\bibliographystyle{plain}

 \end{document}